\newtheorem{theorem}{Theorem}[section]
\newtheorem{lemma}[theorem]{Lemma}
\newtheorem{corollary}[theorem]{Corollary}
\newtheorem{prop}[theorem]{Proposition}
\theoremstyle{definition}
\newtheorem{definition}[theorem]{Definition}
\newtheorem{example}[theorem]{Example}
\newcommand{\diam}{\text{diam }}
\newcommand{\ep}{\varepsilon}
\newcommand{\bR}{\mathbb R}
\newcommand{\bC}{\mathbb C}
\newcommand{\bZ}{\mathbb Z}
\newcommand{\bN}{\mathbb N}
\newcommand{\bT}{\mathbb T}
\newcommand{\B}{\mathcal B}
\newcommand{\F}{\mathcal F}
\renewcommand{\P}{\mathcal P}
\newcommand{\U}{\mathcal U}
\newcommand{\FOR}{\text{ for }}
\newcommand{\AND}{\text{ and }}
\newcommand{\FORAL}{\text{ for all }}
\newcommand{\OR}{\text{ or }}
\renewcommand{\to}{\rightarrow}
\newcommand{\mt}{\varnothing}
\newcommand{\ol}{\overline}
\renewcommand{\phi}{\varphi}
\newcommand{\upchi}{{\raise.35ex\hbox{$\chi$}}}
\newcommand{\id}{\text{id}}
\begin{document}
\title[Some examples of factor groupoids]{Some examples of factor groupoids}
\author[Mitch Haslehurst]{Mitch Haslehurst}
\email{mjthasle@gmail.com}

\begin{abstract} We examine two classes of examples of Hausdorff \'etale factor groupoids; one comes from taking a quotient space of the unit space of an AF-groupoid, and the other comes from certain nonhomogeneous extensions of Cantor minimal systems considered by Robin Deeley, Ian Putnam, and Karen Strung. The reduced C$^*$-algebras of the factor groupoids are classifiable in the Elliott scheme, and we describe their $K$-theory and traces.
\end{abstract}



\keywords{C$^*$-algebra, groupoid, $K$-theory, dynamical system, iterated function system}
\subjclass[2020]{46L55 (primary), 46L35, 46L80, 37B05 (secondary)}

\maketitle

\section{Introduction}

In recent years, the problem of constructing groupoid models for C$^*$-algebras has attracted a great deal of attention, not least because groupoid models make tools from dynamics and symmetry available that can help to elucidate the structure of C$^*$-algebras. If we restrict our attention to C$^*$-algebras that are classifiable in the Elliott scheme (that is, all unital, separable, simple C$^*$-algebras with finite nuclear dimension and that satisfy the Universal Coefficient Theorem), the problem becomes finding groupoid models while aiming for a specified Elliott invariant (in other words, ``prescribed" $K$-theory). The most notable of recent results within this line of research is Li's construction in \cite{li}, where a twisted groupoid is produced whose C$^*$-algebra yields a given Elliott invariant. The idea is essentially to modify a similar construction due to Elliott using inductive limits, while modifying the process so as to produce a Cartan subalgebra in the limit, hence a groupoid model. The results are quite extensive, and the complexity of the prescribed invariant is tied intimately with the dimension of the unit space of the constructed groupoid, as well as the necessity of a twist. In \cite{pres}, Putnam constructed groupoids whose C$^*$-algebras have $K$-theory groups that can be realized as dimension groups using a procedure with subgroupoids. The idea is to enlarge an AF equivalence relation to regard it as an open subgroupoid of a larger groupoid, which produces two C$^*$-algebras, one contained in the other as a subalgebra. Other subgroupoids constructed for similar purposes may be found in \cite{equiv}. The methods in \cite{equiv} improve upon those in \cite{li} and \cite{pres} in that it is possible to prescribe $K$-theory with torsion without a twist.

In \cite{equiv} and \cite{pres}, the crucial tool in computing the $K$-theory is Putnam's excision theorem in \cite{exc}. While stated in a great deal of generality, the main result of \cite{exc} has two intended uses specific to groupoids. The first is the so-called ``subgroupoid" situation mentioned above which is used in \cite{equiv} and \cite{pres}. The second is the so-called ``factor groupoid" situation, that is, two groupoids $G$ and $G'$ with a surjective groupoid homomorphism $\pi:G\to G'$ which results in an inclusion $C_r^*(G')\subseteq C_r^*(G)$. The term comes from the theory of dynamical systems via factor and extension systems, and therefore factor groupoids may be considered somewhat intuitively as the noncommutative version of the categorical equivalence between quotient maps $\pi:X\to Y$ of compact Hausdorff spaces and inclusions $C(Y)\subseteq C(X)$, see Proposition \ref{factor}.

The factor groupoid situation has recently begun to see some exploration. In \cite{matui}, Matui has established an analogue of Putnam's excision theorem for groupoid homology when the groupoids have totally disconnected unit spaces. These results are then used to compute the homology groups arising from examples of shifts of finite type and hyperplane groupoids. In the present paper, our goal is to produce examples of factor groupoids whose C$^*$-algebras are classifiable, and examine their $K$-theory. The unit spaces will often not be totally disconnected: connected components will be circles, or subsets of attractors of iterated function systems.

We will consider two distinct constructions of factor groupoids. The first construction begins with the same set-up as in \cite{pres}: two Bratteli diagrams $(V,E)$ and $(W,F)$ and two graph embeddings of $(W,F)$ into $(V,E)$ satisfying some specified conditions. The goal is to create a groupoid whose ordered $K_0$-group is the dimension group of $(V,E)$, and whose $K_1$-group is the dimension group of $(W,F)$. While the construction in \cite{pres} enlarges the usual tail equivalence relation $R_E$ on the path space $X_E$ of $(V,E)$ to include pairs of paths in the two embeddings (which creates a new groupoid over the same unit space), we actually collapse these paths in $X_E$ and pass $R_E$ through the resulting quotient map, creating an equivalence relation on an entirely new space, and we denote the space and equivalence relation by $X_\xi$ and $R_\xi$, respectively. It should also be noted that, unlike in \cite{pres}, obtaining the \'etale topology and amenability on the new groupoids is straightforward (these properties essentially carry over right from $R_E$). It is the new unit spaces, rather than the new groupoids, that require more attention in order to describe them. We address this in section 3.

\begin{theorem}\label{embedding}
Let $(V,E)$ and $(W,F)$ be two Bratteli diagrams satisfying the embedding conditions described in section 3. There exists a quotient space $X_\xi$ of $X_E$ that satisfies the following.
\begin{enumerate}[(i)]
    \item $X_\xi$ is compact and metrizable.
    \item Every connected component of $X_\xi$ is either a single point or homeomorphic to $\bT$.
    \item $X_\xi$ has covering dimension one.
\end{enumerate}
Moreover, the resulting factor groupoid $R_\xi$ of $R_E$ satisfies the following.
\begin{enumerate}[(i)]
    \item $R_\xi$ is  second-countable, locally compact, Hausdorff, \'etale, and principal with dynamic asymptotic dimension zero.
    \item If $R_E$ is minimal, then so is $R_\xi$, and hence $C_r^*(R_\xi)$ is classifiable.
    \item $K_0(C_r^*(R_\xi))$ is order isomorphic to $K_0(C_r^*(R_E))$.
    \item $K_1(C_r^*(R_\xi))$ is isomorphic to $K_0(C_r^*(R_F))$.
    \item The map $\tau\mapsto\tau\circ\alpha$, where $\tau$ is a tracial state on $C_r^*(R_E)$ and $\alpha$ is as in Proposition \ref{factor}, is an affine homeomorphism from $T(C_r^*(R_E))$ to $T(C_r^*(R_\xi))$.
\end{enumerate}
\end{theorem}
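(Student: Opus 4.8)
I would realize $X_\xi$ as $X_E/{\sim}$, where $\sim$ has as its only nontrivial classes the pairs of paths identified by the embedding conditions of section 3, with quotient map $\rho\colon X_E\to X_\xi$, and set $R_\xi=(\rho\times\rho)(R_E)$; then $\rho\times\rho$ is a surjective groupoid homomorphism, so $R_\xi$ is a factor groupoid of $R_E$ in the sense of the introduction. For part (i) of the first list I would check that $\sim\subseteq X_E\times X_E$ is closed (a consequence of compactness of $X_E$ and of continuity and properness of the embeddings), whence $X_\xi$, being the quotient of a compact metric space by a closed equivalence relation, is compact and metrizable. For (ii), the key point is that $\rho$ is injective off the closed set $B\subseteq X_E$ consisting of the two embedded copies of $X_F$ together with their limit points, so every point of $X_\xi\wo\rho(B)$ is its own connected component; on $\rho(B)$ one unwinds the prescribed identifications level by level and recognises the same fibre pattern that exhibits $\bT$ as a quotient of a Cantor set (at each stage the ``turn-then-go-one-way'' path is glued to the ``opposite-turn-then-go-the-other-way'' path, with a global closing-up), so that each connected component of $\rho(B)$ is either a point or a circle, and maximality of these circle components follows once more from total disconnectedness of $X_\xi$ off $\rho(B)$. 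For (iii), the construction presents $X_\xi$ as an inverse limit of finite graphs, giving $\dim X_\xi\le 1$, while $X_\xi$ contains a closed copy of $\bT$, giving $\dim X_\xi\ge 1$.

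For the groupoid properties, write $R_{E,n}$ for the $n$th stage of the AF equivalence relation $R_E$ and note that $R_\xi=\bigcup_n(\rho\times\rho)(R_{E,n})$ is an increasing union of compact open principal subgroupoids; since $\rho$ is injective off $B$ and the embedding conditions control the $R_E$-classes meeting $B$, one verifies that $(\rho\times\rho)(R_{E,n})$ is Hausdorff and open inside $(\rho\times\rho)(R_{E,n+1})$, so $R_\xi$ is second-countable, locally compact, Hausdorff and \'etale; it is principal by construction (a subset of $X_\xi\times X_\xi$), and it has dynamic asymptotic dimension zero as an increasing union of compact open principal subgroupoids. If $R_E$ is minimal, then for any $x$ the $R_\xi$-orbit of $\rho(x)$ contains $\rho(R_Ex)$, and since $\rho(R_Ex)$ is dense ($\rho(\overline{R_Ex})=\rho(X_E)=X_\xi$ and $\rho(\overline{R_Ex})\subseteq\overline{\rho(R_Ex)}$), $R_\xi$ is minimal. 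Consequently $C_r^*(R_\xi)$ is unital, separable, amenable (whence nuclear and UCT, amenability carrying over from $R_E$), simple (minimal plus principal implies effective), and of finite nuclear dimension (dynamic asymptotic dimension zero together with $\dim X_\xi=1$, via Guentner--Willett--Yu), hence classifiable.

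For the $K$-theory and traces, the factor homomorphism $\rho\times\rho$ yields the unital inclusion $\alpha\colon C_r^*(R_\xi)\hookrightarrow C_r^*(R_E)$ of Proposition \ref{factor}. I would apply the factor-groupoid form of Putnam's excision theorem \cite{exc} to this inclusion: the relative data is supported on $B$, whose internal $R_E$-structure, after the two copies of $X_F$ are identified, is the tail equivalence relation $R_F$; chasing the resulting six-term sequence, together with the fact that $C_r^*(R_E)$ is AF (so $K_1(C_r^*(R_E))=0$ and $K_0$ is the dimension group of $(V,E)$), should give $K_0(C_r^*(R_\xi))\cong K_0(C_r^*(R_E))$ and $K_1(C_r^*(R_\xi))\cong K_0(C_r^*(R_F))$ as groups. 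The $K_0$-isomorphism is implemented by the unital inclusion $\alpha$ and is therefore positive; that it is an order isomorphism I would deduce from part (v) below together with strict comparison in the $\mathcal Z$-stable algebras $C_r^*(R_\xi)$ and $C_r^*(R_E)$, which recovers the positive cone of $K_0$ from the tracial states on each side. For (v): via the canonical conditional expectations onto $C(X_E)$ and $C(X_\xi)$, tracial states on $C_r^*(R_E)$, resp.\ $C_r^*(R_\xi)$, correspond to $R_E$-, resp.\ $R_\xi$-, invariant Borel probability measures, under which correspondence $\tau\mapsto\tau\circ\alpha$ becomes push-forward along $\rho$; because $B$ is null for every $R_E$-invariant measure (part of the embedding conditions, as in \cite{pres}), push-forward along $\rho$ is an affine bijection of the two simplices, and it is a homeomorphism for the weak-$*$ topologies.

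The step I expect to be the main obstacle is part (ii) of the first list, which hinges on an exact combinatorial description of which pairs of paths are identified and why the identifications assemble into circles rather than into arcs or more complicated continua; a secondary difficulty is the bookkeeping in the excision sequence that identifies the relative term with $K_0(C_r^*(R_F))$ and confirms that the induced map on $K_0$ is precisely $\alpha_*$. The remaining items --- the \'etale topology, amenability, minimality, and classifiability of $R_\xi$ --- are, as indicated in the introduction, essentially inherited from $R_E$.
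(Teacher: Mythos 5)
Your overall architecture matches the paper's (quotient by a closed relation, $R_\xi$ as an increasing union of compact open subgroupoids $\pi(R_n)$, Putnam's excision theorem, traces via invariant measures), but two of the steps you defer contain genuine gaps rather than mere bookkeeping. First, in part (ii) of the first list, the inference ``$\rho$ is injective off $B$, so every point of $X_\xi\wo\rho(B)$ is its own connected component'' is a non sequitur, and moreover $B$ is the wrong set: the circle components are the images of the sets $C_\xi(p)$ of paths whose tails lie in $\xi^0(F)\cup\xi^1(F)$ but are allowed to \emph{alternate} between the two embeddings, and such a set contains uncountably many points at which $\rho$ is injective (only the eventually-one-embedding paths get glued). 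Also, the closure of the non-injectivity locus is typically all of $X_E$ under full edge connections, so ``off $\rho(B)$'' can be empty. The correct dichotomy, which the paper proves, is: paths with infinitely many edges outside $\xi^0(F)\cup\xi^1(F)$ have arbitrarily small clopen saturated neighbourhoods (hence give point components), while each $C_\xi(p)$ is homeomorphic to $X_F^{(w)}\times\{0,1\}^{\bN}$ and maps onto $X_F^{(w)}\times\bT$ via the binary-expansion map in the second coordinate.

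Second, the $K$-theory does not follow from ``chasing the six-term sequence'' alone. After computing the relative groups (which requires noticing that $C_r^*(H)\cong C_r^*(H')\oplus C_r^*(H')$ with $\alpha$ the \emph{diagonal} inclusion, whence the degree shift $K_j(C_r^*(H'),C_r^*(H))\cong K_{1-j}(C_r^*(H'))\cong K_{1-j}(C_r^*(R_F))$ --- this shift is what puts $K_0(C_r^*(R_F))$ in degree one), exactness only yields
\[0\to K_0(C_r^*(R_\xi))\to K_0(C_r^*(R_E))\to K_0(C_r^*(R_F))\to K_1(C_r^*(R_\xi))\to 0,\]
which is consistent with $K_0(C_r^*(R_\xi))$ being a proper subgroup and $K_1(C_r^*(R_\xi))$ a proper quotient. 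The missing ingredient is the surjectivity of $\alpha_*$ on $K_0$, which the paper establishes by exhibiting, for each generator $\upchi_{\gamma(p,p)}$ of the AF-algebra, an explicit partial isometry (a normalized sum of $\upchi_{\gamma(p,q_\omega)}$ over all $2^{n-k+1}$ relabellings of the embedded edges of $p$) conjugating it to a projection in $\alpha(C_c(R_\xi))$. Without this, neither conclusion (iii) nor (iv) follows. A smaller point of the same flavour: for (v) you need surjectivity of the pushforward on traces, i.e.\ that every $R_\xi$-invariant measure vanishes on the gluing locus $H'^{(0)}$; this is not ``part of the embedding conditions'' but is proved using rotation invariance along the circle components.
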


While the order isomorphism between the $K_0$-groups in Theorem \ref{embedding} is induced by the inclusion $C_r^*(R_\xi)\subseteq C_r^*(R_E)$, the $K_1$-group appears to receive its structure from the unit space of $R_\xi$, whose connected components are either points or circles. Indeed, the isomorphism between $K_1(C_r^*(R_\xi))$ and $K_0(C_r^*(R_F))$ is essentially given by a Bott map, and the elements of the group $K_0(C_r^*(R_F))$ seem to correspond to winding numbers on (at least a portion of) the space $X_\xi$. Curiously, these C$^*$-algebras are isomorphic (by classification considerations) to those constructed in \cite{pres} using subgroupoid methods, but the two methods produce non-isomorphic Cartan subalgebras.

From Theorem \ref{embedding} we obtain the following result of prescribed $K$-theory.

\begin{corollary}\label{pres}
Let $G_0$ be a simple, acyclic, dimension group and $G_1$ a countable, torsion-free abelian group. Then there exist Bratteli diagrams $(V,E)$ and $(W,F)$ satisfying the embedding conditions described in section 3 such that the resulting $R_\xi$ from Theorem \ref{embedding} has $K_0(C_r^*(R_\xi))\cong G_0$ as ordered groups with order unit and $K_1(C_r^*(R_\xi))\cong G_1$.
\end{corollary}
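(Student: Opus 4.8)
The plan is to apply Theorem \ref{embedding} after producing, from the abstract data $(G_0, G_1)$, a pair of Bratteli diagrams $(V,E)$ and $(W,F)$ together with the requisite embeddings. The point of departure is the Effros--Handelman--Shen theorem: since $G_0$ is a simple acyclic dimension group, it arises as the dimension group of some simple Bratteli diagram $(V,E)$, and one may arrange $R_E$ to be minimal (for instance by telescoping to ensure all edge multiplicities are positive, which is exactly what simplicity of $G_0$ buys). For $G_1$, I would use the fact that every countable torsion-free abelian group is the direct limit of a sequence $\bZ^{n_1}\to\bZ^{n_2}\to\cdots$ of finite-rank free abelian groups with injective connecting maps; realizing those maps by (possibly negative-entry-free, after adjustment) integer matrices gives a Bratteli diagram $(W,F)$ whose dimension group --- forgetting the order --- is $G_1$. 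Here one exploits that Theorem \ref{embedding}(iv) only asserts an isomorphism of groups, not of ordered groups, so no compatibility of the order on $G_1$ is needed and the sign ambiguities in choosing the matrices for $(W,F)$ are harmless.

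The main work is then to arrange the \emph{embedding conditions of section 3} between $(W,F)$ and $(V,E)$. I would do this inductively along the two diagrams: after telescoping both, one wants at each level two graph embeddings of the truncated $(W,F)$ into the truncated $(V,E)$ that are disjoint (or satisfy whatever disjointness/range conditions section 3 imposes) and compatible with the connecting maps. The freedom to telescope $(V,E)$ as much as necessary is the key device: by passing to a sufficiently deep level one can make the vertex sets of $(V,E)$ large enough, and the edge multiplicities big enough, to accommodate two copies of the corresponding portion of $(W,F)$ while still leaving enough ``room'' to realize the prescribed connecting maps of $(V,E)$. This is the same bookkeeping that appears in \cite{pres}, and I would cite or adapt that construction rather than redo it in full; the only new point to check is that telescoping and the insertion of the $(W,F)$-copies do not destroy minimality of $R_E$, which again follows because simplicity of $G_0$ lets us keep all multiplicities strictly positive.

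With $(V,E)$, $(W,F)$, and the embeddings in hand, Theorem \ref{embedding} applies directly: it produces $R_\xi$ with $R_E$ minimal, hence $C_r^*(R_\xi)$ classifiable, and with $K_0(C_r^*(R_\xi))\cong K_0(C_r^*(R_E))$ as ordered groups with order unit. Now $K_0(C_r^*(R_E))$ is the dimension group of $(V,E)$, which we chose to be $G_0$ (with its order unit given by a distinguished top vertex), and $K_1(C_r^*(R_\xi))\cong K_0(C_r^*(R_F))$ is the dimension group of $(W,F)$, which as a group is $G_1$. So the corollary follows. The one genuine obstacle is verifying that the technical embedding hypotheses of section 3 can always be met for arbitrary such $G_0$ and $G_1$ simultaneously --- in particular that the disjointness requirements do not force an incompatibility with prescribing both the order on $G_0$ and the (unordered) group $G_1$ --- but the telescoping freedom on $(V,E)$ should absorb this, exactly as in the subgroupoid construction of \cite{pres}.
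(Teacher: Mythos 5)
Your proposal matches the paper's own (very brief) argument: realize $G_0$ and $G_1$ as dimension groups of Bratteli diagrams via Effros--Handelman--Shen, massage $(V,E)$ so that two disjoint graph embeddings of $(W,F)$ exist without changing the dimension groups, and defer the combinatorial details to \cite{pres}, after which Theorem \ref{embedding} gives the conclusion. The only point worth adding is that telescoping alone does not enlarge the vertex sets of $(V,E)$, so the paper also invokes symbol splitting to make room for the two copies of $(W,F)$; this is exactly the bookkeeping from \cite{pres} that you propose to cite.
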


The proof is essentially to find Bratteli diagrams $(V,E)$ and $(W,F)$ whose dimension groups are $G_0$ and $G_1$, respectively. By telescoping and symbol splitting the diagram $(V,E)$, one can arrange that the embeddings described in section 3 exist without altering the dimension groups. Details may be found in \cite{pres}.

Admittedly, the collection of Elliott invariants that are obtainable via Theorem \ref{embedding} and Corollary \ref{pres} is not very extensive, as we are restricted to $K$-groups that are dimension groups and we have little control over the pairing between $K$-theory and traces. However, it should be observed that the factor groupoids $R_\xi$ have a relatively simple structure; indeed, the entire construction is essentially a generalized ``noncommutative" version of the Cantor function (see the map $\phi$ in Proposition \ref{prod}). This raises the question as to what $K$-theory data may be obtained with more complicated factor maps, or with additional structure(s) such as twists.

We also obtain a result on AF-embeddability. We say that a C$^*$-algebra is AF-embeddable if it can be embedded into an AF-algebra. There has been a considerable amount of research conducted into determining when a C$^*$-algebra is AF-embeddable, initiated by the result of Pimsner and Voiculescu that every irrational rotation algebra is AF-embeddable \cite{afembed}. Possibly the most substantial recent result is the following, due to Schafhauser \cite{schafhauser}: every separable, exact C$^*$-algebra satisfying the Universal Coefficient Theorem (UCT) and possessing a faithful, amenable trace is AF-embeddable. Though the following result is weaker, it seems worth stating because it follows immediately from Theorem \ref{embedding} and Corollary \ref{pres}, and because the embedding is given concretely by a factor map.

\begin{corollary}\label{afembed}
Suppose that $A$ is a classifiable C$^*$-algebra with $(K_0(A),K_0(A)^+,[1_A])$ a simple, acyclic dimension group, $K_1(A)$ a countable, torsion-free abelian group, and standard pairing between $K$-theory and traces. Then $A$ is AF-embeddable.
\end{corollary}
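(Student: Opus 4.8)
The plan is to identify $A$ with $C_r^*(R_\xi)$ for an appropriately chosen factor groupoid and then exploit the inclusion $C_r^*(R_\xi)\subseteq C_r^*(R_E)$ of Proposition \ref{factor}. I would begin by applying Corollary \ref{pres} with $G_0=(K_0(A),K_0(A)^+,[1_A])$ and $G_1=K_1(A)$; this produces Bratteli diagrams $(V,E)$ and $(W,F)$ meeting the embedding conditions of section 3 for which the associated $R_\xi$ has $K_0(C_r^*(R_\xi))\cong K_0(A)$ as ordered groups with order unit and $K_1(C_r^*(R_\xi))\cong K_1(A)$. Since $G_0$ is simple, one can take $(V,E)$ to be a simple Bratteli diagram, so that $R_E$ is minimal and, by Theorem \ref{embedding}(ii), $C_r^*(R_\xi)$ is classifiable.

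Next I would match the tracial data. Since $R_E$ is an AF equivalence relation, $C_r^*(R_E)$ is an AF algebra, so $T(C_r^*(R_E))$ is affinely homeomorphic to the state space $S(K_0(C_r^*(R_E)),[1])$ with the canonical pairing. By Theorem \ref{embedding}(v) the map $\tau\mapsto\tau\circ\alpha$ is an affine homeomorphism $T(C_r^*(R_E))\to T(C_r^*(R_\xi))$, and because $\alpha$ is the inclusion $C_r^*(R_\xi)\hookrightarrow C_r^*(R_E)$ inducing the order isomorphism of Theorem \ref{embedding}(iii), evaluating $\tau\circ\alpha$ on a projection $p$ over $C_r^*(R_\xi)$ gives $(\tau\circ\alpha)(p)=\tau(\alpha(p))$, so the pairing of $T(C_r^*(R_\xi))$ with $K_0(C_r^*(R_\xi))$ is simply the canonical pairing of the AF algebra $C_r^*(R_E)$ transported along the $K_0$-isomorphism. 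Hence $C_r^*(R_\xi)$ has standard pairing and $T(C_r^*(R_\xi))\cong S(K_0(A),[1_A])$ compatibly with it. Since by hypothesis $A$ likewise has $T(A)\cong S(K_0(A),[1_A])$ with standard pairing, the full Elliott invariants of $A$ and $C_r^*(R_\xi)$ agree, and the classification theorem yields a unital isomorphism $A\cong C_r^*(R_\xi)$.

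Finally, Proposition \ref{factor} applied to the factor map $R_E\to R_\xi$ supplies a unital inclusion $C_r^*(R_\xi)\subseteq C_r^*(R_E)$ into the AF algebra $C_r^*(R_E)$; composing it with the isomorphism above embeds $A$ into an AF algebra, which is the claim.

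The one point that calls for genuine care is the compatibility check in the second paragraph: one must confirm that the affine homeomorphism of Theorem \ref{embedding}(v) intertwines the $K_0$-identification of Theorem \ref{embedding}(iii), so that the property of having ``standard pairing'' really transfers from $C_r^*(R_E)$ to $C_r^*(R_\xi)$ and thereby matches the hypothesis imposed on $A$. Once that bookkeeping is in place, the corollary is an immediate consequence of the Elliott classification theorem together with the facts, already recorded in Theorem \ref{embedding} and Proposition \ref{factor}, that $C_r^*(R_\xi)$ is simple and classifiable, that $C_r^*(R_E)$ is AF, and that the relevant inclusion is unital.
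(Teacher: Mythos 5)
Your argument is correct and is exactly the route the paper intends: the paper itself only remarks that the corollary ``follows immediately from Theorem \ref{embedding} and Corollary \ref{pres}'' with the embedding realized by the factor map, and your write-up simply fills in the same steps (prescribe the invariant via Corollary \ref{pres}, match traces and the pairing using Theorem \ref{embedding}(iii) and (v) together with the standard-pairing hypothesis on $A$, invoke classification, and embed $C_r^*(R_\xi)$ into the AF algebra $C_r^*(R_E)$). No changes needed.
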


The second construction we consider arises from the nonhomogeneous extensions of Cantor minimal systems considered in \cite{dps}, which we refer to as DPS extensions in acknowledgement of the authors. In these extensions, the fibres of the factor map are either a single point or homeomorphic to the attractor of a given iterated function system. Armed with the excision theorem from \cite{exc}, we are in a position to provide a description of the $K$-theory of these extensions. A minor modification of the extension is necessary in order for the factor map to satisfy the regularity hypothesis needed to apply the excision theorem, and we describe the construction of the extension and its modification in section 4.

If $(X,\phi)$ is a dynamical system, we denote the orbit equivalence relation by $R_\phi$.

\begin{theorem}\label{ifs}
Let $(C,d_C,\F)$ be a compact invertible iterated function system and $(X,\phi)$ a Cantor minimal system. There is a modified version of the associated DPS extension $(\tilde X,\tilde\phi)$ of $(X,\phi)$, an AF-algebra $A$, a short exact sequence
\begin{center}
\begin{tikzcd}
0 \arrow[r] & K_0(C_r^*(R_\phi)) \arrow[r] & K_0(C_r^*(R_{\tilde\phi})) \arrow[r] & K_0(A)\otimes(K^0(C)/\bZ) \arrow[r] & 0
\end{tikzcd}
\end{center}
and a short exact sequence
\begin{center}
\begin{tikzcd}
0 \arrow[r] & \bZ \arrow[r] & K_1(C_r^*(R_{\tilde\phi})) \arrow[r] & K_0(A)\otimes K^{-1}(C) \arrow[r] & 0.
\end{tikzcd}
\end{center}
\end{theorem}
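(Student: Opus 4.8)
The plan is to apply Putnam's excision theorem from \cite{exc} to the factor map $\tilde\phi \to \phi$ (or rather to the associated groupoid homomorphism $\pi : R_{\tilde\phi} \to R_\phi$), once the modification of the DPS extension described in section 4 guarantees that $\pi$ satisfies the regularity hypotheses needed. The factor map $q : \tilde X \to X$ collapses each attractor-fibre over a point of a distinguished orbit to that point, and is a homeomorphism elsewhere; correspondingly the "kernel" of the inclusion $C_r^*(R_\phi) \subseteq C_r^*(R_{\tilde\phi})$ should be identified with an ideal Morita equivalent to a C$^*$-algebra built from the iterated function system $(C, d_C, \F)$ and the AF-algebra $A$ encoding the Bratteli structure of $(X,\phi)$. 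The first step is therefore to make this identification precise: show that the relevant ideal (the sections vanishing on the part of $R_{\tilde\phi}$ lying over the diagonal of $R_\phi$, or whatever the excision theorem produces) is Morita equivalent to $A \otimes C(C)$, or more precisely to a mapping-cone-type algebra whose $K$-theory is $K_*(A) \otimes \tilde K^*(C)$ after reducing by the basepoint.

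Next I would feed this into the six-term exact sequence supplied by the excision theorem. Excision gives a cyclic six-term sequence relating $K_*(C_r^*(R_\phi))$, $K_*(C_r^*(R_{\tilde\phi}))$, and $K_*$ of the excised (quotient or ideal) algebra. Using that $C_r^*(R_\phi)$ is an AF-algebra-like object with $K_1 = 0$ — more precisely, that $R_\phi$ is the orbit relation of a Cantor minimal system, so $K_1(C_r^*(R_\phi)) = \bZ$ and $K_0$ is the usual dimension group — one can try to split the six-term sequence into the two short exact sequences claimed. The connecting maps must be analyzed: the index map out of $K_1$ of the excised algebra lands in $K_0(C_r^*(R_\phi))$, and one expects it to vanish (or to be absorbed), leaving the four-term piece that breaks as stated. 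The $\bZ$ appearing on the left of the $K_1$-sequence is the class coming from $K_1(C_r^*(R_\phi)) \cong \bZ$ pushed into $K_1(C_r^*(R_{\tilde\phi}))$; one must check this map is injective, which should follow because the factor map admits a continuous section over a clopen set carrying the relevant winding, or because $\phi$ is a genuine quotient and traces detect the class.

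The main obstacle will be the computation of the $K$-theory of the excised algebra and the identification of the connecting homomorphisms, in particular showing the six-term sequence decouples into the two short exact sequences with the tensor factors $K^0(C)/\bZ$ and $K^{-1}(C)$ appearing exactly as written. This requires (a) a careful Künneth-type argument, since the ideal is (Morita equivalent to) a tensor product of the AF-algebra $A$ with a commutative C$^*$-algebra $C(C)$ whose $K$-theory is $K^0(C) \oplus K^{-1}(C)$, and the Künneth theorem then gives $K_*(A) \otimes K^*(C)$ with no $\mathrm{Tor}$ term because $K_0(A)$ is torsion-free; and (b) showing that the reduction by $\bZ$ on the even side (replacing $K^0(C)$ by $K^0(C)/\bZ$) and the survival of a single $\bZ$ on the odd side both come from the basepoint/unit class, i.e. from the fact that the modification only collapses fibres over a single orbit and the "trivial" part of $K^0(C)$ (constants) matches the $K_0$-class already accounted for in $C_r^*(R_\phi)$. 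Verifying that the modification in section 4 indeed makes $\pi$ satisfy the hypotheses of the excision theorem — properness/regularity of the factor map in the sense required — is the other delicate point, but by assumption that construction is carried out in section 4, so here it can be invoked.
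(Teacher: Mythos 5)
Your overall architecture matches the paper's: apply Putnam's excision theorem to $\pi:R_{\tilde\phi}\to R_{\phi_E}$, identify the excised pair with (something Morita equivalent to) $A\subseteq A\otimes C(C)$, compute its relative $K$-theory by a K\"unneth argument with no $\mathrm{Tor}$ term since $K_0(A)$ is torsion-free, and then read off the two short exact sequences from the six-term relative sequence. This is exactly what Lemma \ref{REC}, Lemma \ref{kunneth}, and the surrounding results do, and your point (b) about the unit class of $C(C)$ accounting for the quotient $K^0(C)/\bZ$ on one side and the surviving $\bZ$ on the other is also how the paper organizes things. (One small inaccuracy: the non-singleton fibres do not lie over a single orbit in general; they lie over the tail-saturation of the subdiagram of identity-labelled edges, which is why a nontrivial $K_0(A)$ appears at all.)

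The genuine gap is the decoupling step. For the six-term sequence to break into the two stated short exact sequences, you need the connecting map $K_0(C_r^*(R_{\phi_E}),C_r^*(R_{\tilde\phi}))\to K_0(C_r^*(R_{\phi_E}))$ to vanish, which by exactness is the same as the injectivity of $\alpha_*:K_0(C_r^*(R_{\phi_E}))\to K_0(C_r^*(R_{\tilde\phi}))$. You write only that ``one expects it to vanish (or to be absorbed),'' but this is the hardest part of the proof and does not follow from generalities: the paper's Lemma \ref{pims} proves it by chasing the Pimsner--Voiculescu sequences \cite{pv} for both crossed products, reducing the question to showing that any $g\in C(\tilde X,\bZ)$ whose coboundary $g-g\circ\tilde\phi^{-1}$ is constant on the fibres of $\tilde\pi$ is itself constant on the fibres --- an argument that uses minimality together with the existence of singleton fibres in an essential way. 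Your justification for the injectivity of $\alpha_*$ on $K_1$ is also not adequate as stated: traces pair with $K_0$, not $K_1$, and $\tilde\pi$ does not admit a continuous global section; the correct (and easy) argument is that the PV boundary map sends the class of the generating unitary to $-[1_{C(X_E)}]$, which pulls back to $-[1_{C(\tilde X)}]\neq0$. Without the $K_0$-injectivity argument the first sequence need not be exact on the left, so this step must be supplied rather than assumed.
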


By $K^0(C)/\bZ$ we mean the quotient of $K^0(C)$ by the canonical copy of $\bZ$ generated by the unit of $C(C)$. We refine the conclusion of Theorem \ref{ifs} in some particular cases of interest, such as when $C$ is a closed cube in Euclidean space (this case was the inital motivation for the construction in \cite{dps}).

\begin{corollary}\label{ifs2}
In the context of Theorem \ref{ifs}, we have the following.
\begin{enumerate}[(i)]
    \item If $C$ is contractible, then the injective maps in both sequences are isomorphisms.
    \item If $X_E$ consists of only one path $(x_1,x_2,\ldots)$ of edges such that\emph{$f_{x_n}=\id_C$} for all $n\geq1$, then the AF-algebra $A$ satisfies $K_0(A)\cong\bZ$ and hence
    \[K_0(C_r^*(R_{\tilde\phi}))/K_0(C_r^*(R_\phi))\cong K^0(C)/\bZ\qquad K_1(C_r^*(R_{\tilde\phi}))/\bZ\cong K^{-1}(C)\]
\end{enumerate}
\end{corollary}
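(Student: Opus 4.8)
The plan is to read off both statements from the two short exact sequences in Theorem \ref{ifs} by analyzing the term $K_0(A)\otimes(K^0(C)/\bZ)$ and $K_0(A)\otimes K^{-1}(C)$ more carefully in each special case. For part (i), the hypothesis that $C$ is contractible forces $K^0(C)\cong\bZ$ (generated by the class of the unit) and $K^{-1}(C)=K^1(C)=0$, by homotopy invariance of topological $K$-theory. Hence $K^0(C)/\bZ=0$ and $K^{-1}(C)=0$, so both right-hand terms in the sequences of Theorem \ref{ifs} vanish. An exact sequence $0\to B\to B'\to 0\to 0$ forces the injection $B\to B'$ to be an isomorphism, which is exactly the assertion. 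I would spell out which copy of $\bZ$ appears in the $K_1$-sequence (it is the $\bZ$ coming from the unit circle direction / the base AF-part, as identified in the proof of Theorem \ref{ifs}) so that the reader sees the isomorphism $K_1(C_r^*(R_{\tilde\phi}))\cong\bZ$ concretely, but this is bookkeeping rather than a new argument.

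For part (ii), the content is the computation $K_0(A)\cong\bZ$ for the particular AF-algebra $A$ built from the Bratteli diagram whose path space $X_E$ is the single path $(x_1,x_2,\ldots)$ with $f_{x_n}=\id_C$ for all $n$. I would trace through the construction of $A$ in section 4: the AF-algebra arises as an inductive limit along the Bratteli diagram, with the connecting maps determined by the edge maps $f_{x_n}$ of the iterated function system. When every $f_{x_n}=\id_C$ and there is a single path, the diagram is a single vertex at each level with a single edge, so the inductive system for $A$ is (a telescoping of) the constant system with constant identity connecting maps; its limit has $K_0\cong\bZ$ with the obvious order. Once $K_0(A)\cong\bZ$ is established, $K_0(A)\otimes(K^0(C)/\bZ)\cong K^0(C)/\bZ$ and $K_0(A)\otimes K^{-1}(C)\cong K^{-1}(C)$ by the unit-object property of $\bZ$ for the tensor product (and $\bZ$ is flat, so no Tor terms intrude), and the two isomorphisms in the statement are immediate from the exact sequences of Theorem \ref{ifs}: the first is the cokernel of $K_0(C_r^*(R_\phi))\hookrightarrow K_0(C_r^*(R_{\tilde\phi}))$, and the second is the cokernel of $\bZ\hookrightarrow K_1(C_r^*(R_{\tilde\phi}))$.

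The main obstacle I anticipate is purely the second bullet point: one must be careful that the hypothesis ``$X_E$ consists of only one path with all $f_{x_n}=\id_C$'' really does pin down $A$ up to the $K_0$ computation, and in particular that the modification of the DPS extension described in section 4 (which is what makes the excision theorem applicable) does not secretly change the relevant Bratteli diagram or its dimension group. I would therefore cross-check against the explicit description of $A$ given in the proof of Theorem \ref{ifs} and confirm that in this degenerate case the modification is trivial (or at least $K_0$-neutral). Everything else — the vanishing of reduced $K$-groups of a contractible space, the behaviour of $-\otimes\bZ$, and extracting an isomorphism from a four-term exact sequence with two zero terms — is routine and can be stated in a sentence each.
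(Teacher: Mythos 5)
Part (i) is fine: contractibility gives $K^0(C)\cong\bZ$ generated by $[1_{C(C)}]$ and $K^{-1}(C)=0$, so both third terms in the sequences of Theorem \ref{ifs} vanish and the injections are onto. This is exactly the (one-line) argument the paper intends.

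Part (ii) has a genuine gap coming from a misreading of the hypothesis, which then propagates into a wrong identification of $A$. The hypothesis is not that $X_E$ itself is a single path --- that is impossible here, since $(X_E,\phi_E)$ is a Cantor minimal system and the construction in section 4 requires at least $\#\F+1$ edges between consecutive vertices, so under your reading the statement would be vacuous. The hypothesis is that $X_E$ contains exactly one infinite path \emph{all of whose edges are assigned} $\id_C$; compare property (iii) of the edge assignment, which guarantees at least one such path, and the examples following Theorem \ref{ifs}, where exactly one edge per level carries $\id_C$. Correspondingly, $A$ is not ``an inductive limit with connecting maps determined by the maps $f_{x_n}$'': it is (up to Morita equivalence) $C_r^*(H')$, which the lemma in section 5.2 identifies, again up to Morita equivalence, with $C_r^*(R_F)$ for $(W,F)$ the \emph{subdiagram} of $(V,E)$ consisting of the edges lying on infinite paths entirely labelled by $\id_C$; its connecting maps are the ordinary multiplicity maps of that subdiagram and have nothing to do with the IFS. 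Under the correct reading of the hypothesis this subdiagram is a single path, so $X_F$ is a point, $C_r^*(R_F)\cong\bC$, and $K_0(A)\cong\bZ$. From there your remaining steps are correct: $\bZ\otimes M\cong M$, so the third terms become $K^0(C)/\bZ$ and $K^{-1}(C)$, and the two displayed isomorphisms are the cokernels of the injections in the two short exact sequences. Your worry about the modification of the DPS extension is well placed but resolves the same way: the modification enlarges $\F$ to $\F^{(n)}$ at level $n$ (changing $(V,E)$), but the hypothesis of (ii) is a condition on the identity-labelled subdiagram of the modified diagram directly, so nothing is ``secretly changed.''
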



At this point it is not clear (at least to the author) whether or not the conclusions of Theorem \ref{ifs} and Corollary \ref{ifs2} can be improved upon without knowing more about $C$. For example, it is not clear precisely when the short exact sequences split, and hence when the quotients in Corollary \ref{ifs2} can be replaced with direct sums. Of course, this is trivially the case in part (i) of the corollary, and also when both $K^0(C)$ and $K^{-1}(C)$ are free abelian, such as when $C$ is the Sierpi\'nski triangle, see Example \ref{sier}.

The paper is organized as follows. In section 2 we outline the necessary preliminary material. In section 3 we construct and analyze the spaces $X_\xi$ and groupoids $R_\xi$. In section 4 we describe the DPS extensions and their modifications. In section 5 we analyze the $K$-theory and complete the proofs of Theorem \ref{embedding} and Theorem \ref{ifs}.

\section{Preliminaries}

We refer the reader to \cite{groupoid} for a detailed treatment of Hausdorff \'etale groupoids, but we outline our notation here. Let $G$ be a locally compact Hausdorff groupoid with composable pairs $G^{(2)}$, unit space $G^{(0)}$, and range and source maps $r,s:G\to G^{(0)}$ defined by $r(x)=xx^{-1}$ and $s(x)=x^{-1}x$. We will only be concerned with \'etale groupoids, which means that $r$ and $s$ are local homeomorphisms. An open subset $U$ of $G$ upon which $r$ and $s$ are local homeomorphisms is called an open bisection. For $u$ in $G^{(0)}$, we denote $r^{-1}(u)$ and $s^{-1}(u)$ by $G^u$ and $G_u$, respectively, which are discrete in the relative topology from $G$.

A groupoid homomorphism $\pi:G\to H$ is a map such that $\pi\times\pi(G^{(2)})\subseteq H^{(2)}$ and $\pi(xy)=\pi(x)\pi(y)$ for every pair $(x,y)$ in $G^{(2)}$. If $G$ and $H$ are topological groupoids, we will require that $\pi$ be continuous. We have $\pi(x^{-1})=\pi(x)^{-1}$ for all $x$ in $G$, and $\pi(G^{(0)})\subseteq H^{(0)}$, with equality if $\pi$ is surjective. We also have $\pi\circ r=r\circ\pi$ and $\pi\circ s=s\circ\pi$. If $\pi$ is bijective and its inverse is continuous, we say it is a groupoid isomorphism.

We denote the $^*$-algebra of all continuous compactly supported complex-valued functions on $G$ by $C_c(G)$ with the operations
\[(f\star g)(x)=\sum_{y\in G^{r(x)}}f(y)g(y^{-1}x)\qquad f^*(x)=\ol{f(x^{-1})}\]
Since $G^{r(x)}$ is discrete and $f$ and $g$ are compactly supported, the above sum is finite for every $x$ in $G$. For $u$ in $G^{(0)}$ and $x$ in $G_u$, let $\delta_x$ be the element of $l^2(G_u)$ that is $1$ at $x$ and $0$ elsewhere. Define the representation $\psi_u:C_c(G)\to\B(l^2(G_u))$ by
\[\psi_u(f)\delta_x=\sum_{y\in G_{r(x)}}f(y)\delta_{yx}\]
for $f$ in $C_c(G)$ and $x$ in $G_u$. The representation $\bigoplus_{u\in G^{(0)}}\psi_u$ is well-defined, it is called the left regular representation of $C_c(G)$, and it is faithful. The closure of $\bigoplus_{u\in G^{(0)}}\psi_u(C_c(G))$ in $\bigoplus_{u\in G^{(0)}}\B(l^2(G_u))$ is called the reduced C$^*$-algebra of $G$, and we denote it by $C_r^*(G)$.

The groupoids that concern us here are equivalence relations. We will either construct them concretely, or obtain them through free actions of the integers. If $R$ is an equivalence relation on a set $X$, then $R$ is a groupoid with product $(x,y)(y',z)=(x,z)$ (which is only defined when $y=y'$) and inverse $(x,y)^{-1}=(y,x)$. If $(X,\phi)$ is a \textit{dynamical system}, that is, $X$ is a set and $\phi:X\to X$ is a bijection, then $X\times\bZ$ is a groupoid with product $(x,l)(y,m)=(x,l+m)$ (which is only defined when $y=\phi^l(x)$) and inverse $(x,l)^{-1}=(\phi^l(x),-l)$. The \textit{orbit equivalence relation} of the dynamical system $(X,\phi)$ is
\[R_\phi=\{(x,\phi^l(x))\mid x\in X\AND l\in\bZ\}\]
and if $\phi$ acts freely on $X$, then the map
\[X\times\bZ\to R_\phi:(x,l)\mapsto(x,\phi^l(x))\]
is, algebraically, an isomorphism of groupoids. If $X$ is a space and $\phi$ is a homeomorphism, then $X\times\bZ$ is \'etale in the product topology, and we endow $R_\phi$ with the unique topology that makes the above algebraic groupoid isomorphism a homeomorphism. An equivalence relation (resp. dynamical system) on $X$ is called \textit{minimal} if every equivalence class (resp. orbit) is dense in $X$. All dynamical systems we consider will be minimal on infinite spaces, hence they will all be free.

The C$^*$-algebra $C_r^*(R_\phi)$ is isomorphic to the (reduced) crossed product $C(X)\times_\phi\bZ$, and thus the Pimsner-Voiculescu sequence \cite{pv} is available to help compute $K_*(C_r^*(R_\phi))$.

It is a rather standard result that, given a continuous, proper, surjective map $\pi:X\to Y$ of locally compact Hausdorff spaces, the map $\alpha:C_0(Y)\to C_0(X)$ defined by $\alpha(f)=f\circ\pi$ is an injective $^*$-homomorphism. We prove a generalization for groupoids, which is our ``factor groupoid" situation.

\begin{prop}\label{factor}
Let $G$ and $G'$ be locally compact, Hausdorff, \'etale groupoids and $\pi:G\to G'$ a continuous, proper, surjective groupoid homomorphism. Suppose also that for all $u$ in $G^{(0)}$, the map $\pi|_{G^u}:G^u\to(G')^{\pi(u)}$ is bijective. Then the map $\alpha:C_c(G')\to C_c(G)$ defined by $\alpha(f)=f\circ\pi$ is an injective $^*$-homomorphism, and it extends to an injective $^*$-homomorphism from $C_r^*(G')$ to $C_r^*(G)$ (which we also denote by $\alpha$). If $\pi$ is a groupoid isomorphism, then $\alpha$ is a $^*$-isomorphism.
\end{prop}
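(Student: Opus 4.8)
The plan is to verify the claims about $\alpha$ in stages: first that $\alpha$ lands in $C_c(G)$, then that it is a $^*$-homomorphism, then that it is injective, then that it extends to the reduced C$^*$-algebras. To see $\alpha(f) = f \circ \pi \in C_c(G)$, note that $f \circ \pi$ is continuous because $\pi$ and $f$ are, and $\supp(f \circ \pi) \subseteq \pi^{-1}(\supp f)$, which is compact because $\pi$ is proper; so $\alpha$ does map $C_c(G')$ into $C_c(G)$. For the $^*$-homomorphism property, the key computation is convolution. Fix $x \in G$ and write $u = r(x)$. Then
\[
(\alpha(f) \star \alpha(g))(x) = \sum_{y \in G^{u}} f(\pi(y))\, g(\pi(y^{-1}x)),
\]
and since $\pi(y^{-1}x) = \pi(y)^{-1}\pi(x)$ with $\pi(y)$ ranging over $(G')^{\pi(u)}$, I would use the hypothesis that $\pi|_{G^u} : G^u \to (G')^{\pi(u)}$ is a bijection to reindex this sum as $\sum_{z \in (G')^{\pi(u)}} f(z) g(z^{-1}\pi(x)) = (f \star g)(\pi(x)) = \alpha(f \star g)(x)$. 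The adjoint identity $\alpha(f)^*(x) = \ol{f(\pi(x^{-1}))} = \ol{f(\pi(x)^{-1})} = \alpha(f^*)(x)$ is immediate from $\pi(x^{-1}) = \pi(x)^{-1}$.

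For injectivity of $\alpha$ on $C_c(G')$: if $f \circ \pi = 0$ then $f$ vanishes on $\pi(G) = G'$ by surjectivity, so $f = 0$. To extend $\alpha$ to $C_r^*(G')$ it suffices to show it is contractive for the reduced norms, and the natural way is to compare left regular representations. For $u \in G^{(0)}$ with $u' = \pi(u) \in (G')^{(0)}$, I would define a linear map $U_u : \ell^2(G_u) \to \ell^2(G'_{u'})$ by $U_u \delta_x = \delta_{\pi(x)}$. The hypothesis on $\pi|_{G^u}$, applied at the unit $u$ (so $\pi|_{G^u}$ is a bijection $G^u \to (G')^{u'}$, and dually $\pi|_{G_u}$ is a bijection $G_u \to G'_{u'}$ since $G_u = (G^u)^{-1}$ under inversion and $\pi$ commutes with inversion), shows that $x \mapsto \pi(x)$ is a bijection $G_u \to G'_{u'}$, so $U_u$ is a well-defined unitary. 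Then one checks the intertwining relation
\[
U_u \,\psi_u(\alpha(f)) = \psi'_{u'}(f)\, U_u \qquad (f \in C_c(G')),
\]
by evaluating both sides on a basis vector $\delta_x$: the left side is $\sum_{y \in G_{r(x)}} f(\pi(y))\, \delta_{\pi(yx)}$ and the right side is $\sum_{w \in G'_{r(\pi(x))}} f(w)\, \delta_{w\pi(x)}$, and these agree after reindexing $w = \pi(y)$ using the bijection $\pi|_{G_{r(x)}} \to G'_{r(\pi(x))}$. Consequently $\|\psi_u(\alpha(f))\| = \|U_u \psi_u(\alpha(f)) U_u^*\| = \|\psi'_{u'}(f)\|$, and taking the supremum over $u \in G^{(0)}$ gives $\|\alpha(f)\|_{C_r^*(G)} = \sup_u \|\psi_u(\alpha(f))\| = \sup_u \|\psi'_{\pi(u)}(f)\| \le \sup_{u' \in (G')^{(0)}} \|\psi'_{u'}(f)\| = \|f\|_{C_r^*(G')}$, using surjectivity of $\pi$ on unit spaces for the reverse inequality as well, so in fact $\alpha$ is isometric on $C_c(G')$. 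Being an isometric $^*$-homomorphism on a dense subalgebra, $\alpha$ extends to an injective $^*$-homomorphism $C_r^*(G') \to C_r^*(G)$.

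Finally, if $\pi$ is a groupoid isomorphism, then $\pi^{-1}$ is a continuous groupoid homomorphism satisfying the same hypotheses (properness is automatic for a homeomorphism, and the fibre condition transfers), so $\alpha$ has a $^*$-homomorphism inverse induced by $\pi^{-1}$, hence is a $^*$-isomorphism. The main obstacle I anticipate is bookkeeping the reindexing in the convolution and intertwining computations — in particular making sure the bijectivity hypothesis $\pi|_{G^u}\colon G^u \to (G')^{\pi(u)}$ is invoked at the correct point (namely at $u = r(x)$) and correctly transferred to a statement about $G_u$ and about $G_{r(x)}$; the analytic part (extending by continuity) is then routine once the isometry estimate is in hand.
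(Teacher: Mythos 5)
Your proposal is correct and follows essentially the same route as the paper: continuity and properness give $\alpha(f)\in C_c(G)$, the fibrewise bijectivity of $\pi$ reindexes the convolution sum, and the unitaries $\delta_x\mapsto\delta_{\pi(x)}$ intertwine $\psi_u\circ\alpha$ with $\psi'_{\pi(u)}$, yielding the isometric extension (the paper phrases the last step as a unitary equivalence ``modulo extra direct factors,'' which is exactly your supremum argument using surjectivity of $\pi$ on unit spaces). Your explicit derivation of the bijectivity of $\pi|_{G_u}$ from that of $\pi|_{G^u}$ via inversion matches the paper's parenthetical remark.
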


\begin{proof}
That $\alpha(f)$ is continuous and compactly supported follows from the fact that $\pi$ is continuous and proper. For any $x$ in $G$, we have
\[\alpha(f\star g)(x)=(f\star g)(\pi(x))=\sum_{z\in (G')^{\pi(r(x))}}f(z)g(z^{-1}\pi(x))\]
and
\[(\alpha(f)\star\alpha(g))(x)=\sum_{y\in G^{r(x)}}f(\pi(y))g(\pi(y^{-1}x))\]
Each $z$ in the first sum corresponds to one and only one $y$ in the second sum via the bijective map $\pi|_{G^{r(x)}}$. Thus both sums have precisely the same terms and they are equal. It is straightforward to check that $\alpha(f^*)=\alpha(f)^*$ and that it is injective. Now, for each $u$ in $G^{(0)}$ the Hilbert spaces $l^2(G_u)$ and $l^2(G'_{\pi(u)})$ are isomorphic via the unitary $\delta_x\mapsto\delta_{\pi(x)}$ (notice that $\pi|_{G_u}:G_u\to(G')_{\pi(u)}$ is also bijective, for any $u$ in $G^{(0)}$). It follows that the representation $\bigoplus_{u\in G^{(0)}}(\psi_u\circ\alpha)$ is unitarily equivalent (modulo extra direct factors over which the reduced norm is constant) to the left regular representation of $C_c(G')$, so we may identify $C_r^*(G')$ with the closure of $\bigoplus_{u\in G^{(0)}}\psi_u(\alpha(C_c(G')))$, which is contained in $C_r^*(G)$. If $\pi$ is a groupoid isomorphism, then the map $\beta:C_c(G)\to C_c(G')$ defined by $\beta(f)=f\circ\pi^{-1}$ is obviously the inverse of $\alpha$. Thus $C_c(G)$ is (algebraically) $^*$-isomorphic to $C_c(G')$ via $\alpha$, and the closures of $\bigoplus_{u\in G^{(0)}}\psi_u(C_c(G))$ and $\bigoplus_{u\in G^{(0)}}\psi_u(\alpha(C_c(G')))$ are equal.
\end{proof}

If $\pi$ and $\alpha$ are as in Proposition \ref{factor}, it is worth recording the easy but useful observation that
\begin{equation}\label{inj}
    \alpha(C_c(G'))=\{f\in C_c(G)\mid f|_{\pi^{-1}(x')}\text{ is constant for all }x'\in G'\}
\end{equation}

We now discuss relative $K$-theory. If $A$ is a unital C$^*$-algebra and $A'$ is a C$^*$-subalgebra of $A$ that contains the unit of $A$, there is a homology theory $K_*(A',A)$ in the sense that if $A'\subseteq A$ and $B'\subseteq B$, and $\psi:A\to B$ is a $^*$-homomorphism with $\psi(A')\subseteq B'$, then there is an induced homomorphism $\psi_*:K_*(A',A)\to K_*(B',B)$ which is an isomorphism if $\psi$ is a $^*$-isomorphism and $\psi(A')=B'$. The theory satisfies Bott periodicity and there is an exact sequence



\begin{center}
\begin{tikzcd}
K_1(A) \arrow[rr, "\mu_0"]   &  & K_0(A',A) \arrow[rr, "\nu_0"] &  & K_0(A') \arrow[dd] \\
                             &  &                                         &  &                              \\
K_1(A') \arrow[uu] &  & K_1(A',A) \arrow[ll, "\nu_1"] &  & K_0(A) \arrow[ll, "\mu_1"]  
\end{tikzcd}
\end{center}
where the vertical maps are induced by the inclusion $A'\subseteq A$. The exact sequence above may be obtained by defining $K_*(A',A)$ to be the $K$-theory of the mapping cone of the inclusion, but the portrait described in \cite{haslehurst} via Karoubi's definitions seems to be more prudent if one wishes to use results from \cite{exc}. We now turn to these results.

If $(X,d)$ is a metric space, $Y$ is a subset of $X$, and $\ep>0$, we define $B(Y,\ep)$ to be the set $\{x\in X\mid\text{there exists }y\in Y\text{ such that }d(x,y)<\ep\}$. If $Y$ and $Z$ are closed, bounded subsets of $X$, the \textit{Hausdorff distance} between $Y$ and $Z$ is
\[d(Y,Z)=\inf\{\ep\mid Y\subseteq B(Z,\ep)\AND Z\subseteq B(Y,\ep)\}.\]
The \textit{diameter} of a bounded subset $Y$ of $X$ is $\diam Y=\sup\{d(x,y)\mid x,y\in Y\}$.

\begin{definition}[7.6 of \cite{exc}]\label{Hs}
    Let $\pi:G\to G'$ be as in Proposition \ref{factor}, and suppose that $G$ has a metric $d_G$ yielding its topology.
    \begin{enumerate}[(i)]
        \item Let
        \[H'=\{x'\in G'\mid\#\pi^{-1}(x')\neq1\}\]
        and endow it with the metric
        \[d_{H'}(x',y')=d_G(\pi^{-1}(x'),\pi^{-1}(y')),\]
        where the right side is the Hausdorff distance.
        \item Let $H=\pi^{-1}(H')$, and endow it with the metric
        \[d_H(x,y)=d_G(x,y)+d_{H'}(\pi(x),\pi(y)).\]
        \item For every integer $n\geq1$, let
        \[\textstyle H_n=\{x\in H\mid\diam\pi^{-1}(\pi(x))>\frac1n\},\]
        where the diameter is measured with $d_G$.
    \end{enumerate}
\end{definition}

\begin{definition}[7.7 of \cite{exc}]\label{reg}
$\pi:G\to G'$ is \emph{regular} if, for all $x'$ in $H'$ and all $\ep>0$, there is an open set $U'\subseteq G'$ with $x'$ in $U'$ such that if $y'$ is in $U'$, at least one of the following holds
\[d_G(\pi^{-1}(x'),\pi^{-1}(y'))<\ep\qquad\diam\pi^{-1}(y')<\ep\]
\end{definition}

There is also a necessary notion of ``measure regularity" for $\pi$. We will not record the definition because in our cases it will follow from regularity.

\begin{definition}[5.5 of \cite{exc}]
Let $C\geq1$. We say that a subset $X\subseteq H$ which is closed in $G$ has the $C$-\emph{extension property} if, for any $f$ in $C_c(H)$ with support in $X$, there exists $\tilde f$ in $C_c(G)$ such that $\tilde f|_X=f|_X$ and $\|\tilde f\|_r\leq C\|f\|_r$, where the left norm is the reduced norm on $C_c(G)$, and the right norm is the reduced norm on $C_c(H)$.
\end{definition}

The following will be our main tool for computing the $K$-theory of the factor groupoids.

\begin{theorem}[7.9, 7.18 and 7.19 of \cite{exc}]\label{exc}
Let $\pi:G\to G'$ be as in Proposition \ref{factor} and assume it is regular and measure regular. Then $H'$ and $H$, with the metrics $d_{H'}$ and $d_H$, are locally compact, Hausdorff, \'etale groupoids with finer topologies than the relative topologies received from $G'$ and $G$, respectively, and $\pi|_{H}:H\to H'$ also satisfies the hypotheses of Proposition \ref{factor}. Moreoever, if there exists a $C\geq1$ such that $\ol{ H_n}^G$ (where the closure is taken in $G$) has the $C$-extension property for all $n\geq1$, then
\[K_*(C_r^*(G'),C_r^*(G))\cong K_*(C_r^*(H'),C_r^*(H))\]
\end{theorem}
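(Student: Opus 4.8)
This is the excision theorem of \cite{exc}; I only sketch the shape of the argument, which proceeds in the two stages of the statement.

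\emph{First stage: the groupoid and topology claims.} I would begin with the purely metric verifications. Symmetry and the triangle inequality for $d_{H'}$ are inherited from those of the Hausdorff distance and of $d_G$, and $d_{H'}(x',y')=0$ forces $\pi^{-1}(x')=\pi^{-1}(y')$ as subsets of $G$, hence $x'=y'$ since distinct points of $G'$ have disjoint fibres; the metric axioms for $d_H$ are then immediate. That the $d_{H'}$-topology refines the relative topology from $G'$ is where properness enters: a $d_{H'}$-bounded family $\{x'_n\}$ has its fibres in a compact subset of $G$, so Hausdorff convergence $\pi^{-1}(x'_n)\to\pi^{-1}(x')$ yields, along a subsequence, convergent choices $a_n\in\pi^{-1}(x'_n)$, whence $x'_n=\pi(a_n)\to x'$; the claim for $H$ follows since $d_H\geq d_G$ by definition. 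Local compactness and Hausdorffness then come down to identifying $d_{H'}$- and $d_H$-bounded closed sets with compact ones, again by properness, and for the \'etale property I would transport open bisections: for an open bisection $V'\subseteq G'$ the set $V'\cap H'$ is $d_{H'}$-open and $r,s$ restrict on it to local homeomorphisms in the finer topology, and such sets cover $H'$; similarly for $H$. Finally, $\pi|_H\colon H\to H'$ satisfies the hypotheses of Proposition \ref{factor} almost for free: it is a continuous surjective groupoid homomorphism, proper by the same compactness argument, and since $H=\pi^{-1}(H')$ the fibre of $\pi|_H$ over a point of $H^u$ coincides with the fibre of $\pi$, so bijectivity of $\pi|_{G^u}$ descends to bijectivity of $(\pi|_H)|_{H^u}$.

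\emph{Second stage: the excision isomorphism.} The guiding principle is that $C_r^*(G')$ and $C_r^*(G)$ differ only over $H$. By \eqref{inj}, $\alpha(C_c(G'))$ is exactly the set of functions in $C_c(G)$ constant on each fibre of $\pi$, and over $G'\setminus H'$ the fibres are singletons, so every relative cycle for the pair $(C_r^*(G'),C_r^*(G))$ can be deformed to one supported near $H$; this is where regularity (Definition \ref{reg}) is used, since it forces the part of $G$ on which $\pi$ has small-diameter fibres to carry no relative information. I would then exhaust $H$ by the closed sets $Z_n:=\ol{H_n}^G$ and invoke the $C$-extension property: any $f\in C_c(H)$ with support in $Z_n$ lifts to some $\tilde f\in C_c(G)$ agreeing with $f$ on $Z_n$ and with $\|\tilde f\|_r\leq C\|f\|_r$, with $C$ independent of $n$. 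These uniformly bounded lifts let one push relative cycles for $(C_r^*(H'),C_r^*(H))$, formed in the finer topologies, to relative cycles for $(C_r^*(G'),C_r^*(G))$ in a way that, after checking independence of the choices up to homotopy, induces a homomorphism $K_*(C_r^*(H'),C_r^*(H))\to K_*(C_r^*(G'),C_r^*(G))$; measure regularity is what guarantees that no reduced-norm mass is created or lost in this passage, keeping $C_r^*(H)$ correctly identified with the completion built from the $Z_n$. Surjectivity of this homomorphism is the deformation just described, and injectivity is the reverse construction using the extension property again; I would organize both directions, and compatibility with the inclusions, through the five lemma applied to the relative-$K$-theory six-term exact sequences recalled above, together with continuity of $K$-theory along the increasing sequence $(Z_n)_{n\geq1}$.

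The main obstacle is the second stage, and within it the fact that the topologies on $H$ and $H'$ are strictly finer than the subspace topologies, so that $C_r^*(H)$ is not a subalgebra, ideal, or quotient of $C_r^*(G)$ in any naive way and there is no obvious comparison map at the level of C$^*$-algebras. Bridging this gap is precisely the purpose of the $C$-extension property with $C$ uniform in $n$, together with measure regularity, and converting the resulting norm estimates into an honest isomorphism of relative $K$-groups — rather than merely a map — is the technical heart of the argument. A secondary point to handle carefully is that $H$ need not have a compact unit space even when $G$ does, so $K_*(C_r^*(H'),C_r^*(H))$ must be set up in a non-unital version of the relative theory; this is routine given the portrait of \cite{haslehurst} but should be stated explicitly.
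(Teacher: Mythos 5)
This statement is not proved in the paper at all: it is quoted verbatim (as Theorems 7.9, 7.18 and 7.19) from \cite{exc}, and the paper simply imports it as a tool. So there is no internal proof to compare your sketch against; the only fair benchmark is Putnam's argument in \cite{exc}. Your first stage is essentially right and matches what is done there: the metric axioms, the refinement of the topologies, local compactness and the \'etale property via transported bisections, and the descent of the Proposition \ref{factor} hypotheses to $\pi|_H$ are all routine verifications of the kind you describe.

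The second stage, however, is an outline with genuine gaps if read as a proof. First, the map $K_*(C_r^*(H'),C_r^*(H))\to K_*(C_r^*(G'),C_r^*(G))$ is never actually constructed: the $C$-extension property controls only the reduced norm of a lift $\tilde f$, not algebraic relations, so extending a representative of a relative class (a projection, unitary, or partial isometry in Karoubi's picture) produces only an approximate representative, and one must run perturbation/functional-calculus arguments with the uniform constant $C$ to repair it and to check independence of choices; this is the technical core of \cite{exc} and cannot be waved through. Second, ``continuity of $K$-theory along the increasing sequence $(Z_n)$'' is not an available tool here: the sets $Z_n=\ol{H_n}^G$ are closed subsets of $G$, not ideals or subalgebras, and $C_r^*(H)$ is not exhibited as an inductive limit over them; in \cite{exc} the exhaustion by the $H_n$ serves only to reduce to compactly supported functions with uniform norm control. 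Third, your account of measure regularity (``no reduced-norm mass is created or lost'') is a heuristic rather than the actual condition, which concerns the compatibility of the counting measures on the fibres $G_u$ with the quotient and is needed to identify the intrinsic reduced norm on $C_c(H)$ with the one used in the comparison. None of this affects the present paper, which only needs to verify the hypotheses (regularity, measure regularity, the $C$-extension property) in its two constructions, as it does in Propositions \ref{cext} and \ref{cext2}.
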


We outline the necessary definitions and notation for AF-groupoids. Let $(V,E)$ be a Bratteli diagram, that is, an infinite directed graph consisting of a set of vertices $V$, a set of edges $E$, and maps $i,t:E\to V$ such that the following hold.
\begin{enumerate}[(i)]
    \item $V$ and $E$ are ordered and partitioned into countably many finite subsets $V_n$ for $n\geq0$ and $E_n$ for $n\geq1$.
    \item $V_0$ consists of a single vertex $v_0$.
    \item If $e$ is in $E_n$, then $i(e)$ is in $V_{n-1}$ and $t(e)$ is in $V_n$.
    \item $i^{-1}(v)$ is nonempty for each vertex $v$ and $t^{-1}(v)$ is nonempty for each vertex $v$ other than $v_0$.
\end{enumerate}

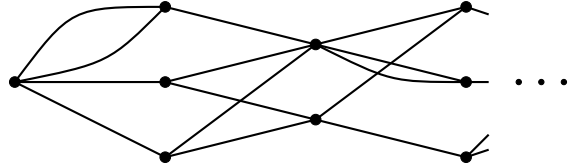
\begin{figure}[ht]
    \begin{tikzpicture}
     \filldraw[black] (0,0) circle (2pt);
     \filldraw[black] (2,1) circle (2pt);
     \filldraw[black] (2,0) circle (2pt);
     \filldraw[black] (2,-1) circle (2pt);
     \filldraw[black] (4,0.5) circle (2pt);
     \filldraw[black] (4,-0.5) circle (2pt);
     \filldraw[black] (6,1) circle (2pt);
     \filldraw[black] (6,0) circle (2pt);
     \filldraw[black] (6,-1) circle (2pt);
     \filldraw[black] (6.7,0) circle (1pt);
     \filldraw[black] (7,0) circle (1pt);
     \filldraw[black] (7.3,0) circle (1pt);
     \draw[black,thick] (0,0) .. controls (0.75,1) .. (2,1);
     \draw[black,thick] (0,0) .. controls (1.25,0.25) .. (2,1);
     \draw[black,thick] (0,0) -- (2,0);
     \draw[black,thick] (2,1) -- (4,0.5);
     \draw[black,thick] (2,-1) -- (4,-0.5);
     \draw[black,thick] (2,-1) -- (4,0.5);
     \draw[black,thick] (0,0) -- (2,-1);
     \draw[black,thick] (2,0) -- (4,0.5);
     \draw[black,thick] (2,0) -- (4,-0.5);
     \draw[black,thick] (4,0.5) -- (6,1);
     \draw[black,thick] (4,0.5) -- (6,0);
     \draw[black,thick] (4,0.5) .. controls (5,0) .. (6,0);
     \draw[black,thick] (4,-0.5) -- (6,1);
     \draw[black,thick] (4,-0.5) -- (6,-1);
     \draw[black,thick] (6,1) -- (6.3,0.9);
     \draw[black,thick] (6,0) -- (6.3,0);
     \draw[black,thick] (6,-1) -- (6.3,-0.7);
     \draw[black,thick] (6,-1) -- (6.3,-0.9);
    \end{tikzpicture}
    \caption{A Bratteli diagram}
    \label{bratteli}
\end{figure}

We say that $(V,E)$ has full edge connections if, for every $n\geq0$ and pair of vertices $v$ in $V_n$ and $w$ in $V_{n+1}$, there is an edge $e$ in $E_{n+1}$ with $i(e)=v$ and $t(e)=w$. We denote by $X_E$ the path space of $(V,E)$: the set of all infinite paths, that is, all sequences $x=(x_1,x_2,x_3,\ldots)$, where $x_n$ is in $E_n$ and $t(x_n)=i(x_{n+1})$ for all $n\geq1$. For a finite sequence of edges $p=(p_1,p_2,\ldots,p_n)$ where $p_j$ is in $E_j$ and $t(p_j)=i(p_{j+1})$, we denote
\[C(p)=C(p_1,p_2,\ldots,p_n)=\{x\in X_E\mid x_j=p_j\FOR1\leq j\leq n\}\]
called the cylinder set of $p$. For any $p$ and $q$, either $C(p)$ and $C(q)$ are disjoint, or one is contained in the other. The path space $X_E$ is a totally disconnected compact metric space in the metric
\[d_E(x,y)=\inf\{1,2^{-n}\mid x_j=y_j\FOR1\leq j\leq n\}\]
and the cylinder sets $C(p)$ are closed and open (clopen) in this metric. If $(V,E)$ has full edge connections, then $X_E$ has no isolated points and is thus homeomorphic to the Cantor set. The equivalence relation $R_E\subseteq X_E\times X_E$ is defined by $(x,y)\in R_E$ if and only if there exists some $n\geq1$ such that $x_k=y_k$ for all $k\geq n$, also known as tail-equivalence. For finite paths $p$ and $q$ with $t(p)=t(q)\in V_n$, let
\[\gamma(p,q)=\{(x,y)\in R_E\mid x\in C(p),y\in C(q),x_k=y_k\FORAL k\geq n+1\}\]
These sets form a base of compact open bisections which makes $R_E$ into a second-countable locally compact Hausdorff \'etale groupoid. If $(V,E)$ has full edge connections, then $R_E$ is minimal. The topology is concretely metrizable: write
\[R_n=\{(x,y)\in R_E\mid x_k=y_k\FORAL k\geq n\}\]
for $n\geq1$ and $R_0=\mt$, and note that $R_E=\bigcup_{n=0}^\infty(R_{n+1}-R_n)$, which is a disjoint union of compact open sets. Let $d_E^{(2)}((x,y),(x',y'))=\max\{d_E(x,x'),d_E(y,y')\}$ if $(x,y)$ and $(x',y')$ are both in $R_{n+1}-R_n$ for some $n$, and $d_E^{(2)}((x,y),(x',y'))=1$ otherwise.

The C$^*$-algebra $C_r^*(R_E)$ is a unital AF-algebra. Concretely, the linear span of the characteristic functions $\upchi_{\gamma(p,q)}$ of the compact open sets $\gamma(p,q)$ act as ``matrix units" and generate a dense union of finite dimensional algebras.

Finally, we need the notion of a Bratteli-Vershik system. An \emph{ordered Bratteli diagram} $(V,E)$ is a Bratteli diagram where the edges are endowed with an order, and two edges $e$ and $f$ are comparable if and only if $t(e)=t(f)$. If there is only one infinite path in $X_E$ that consists of only maximal edges and only one infinite path in $X_E$ that consists of only minimal edges, we say that the diagram $(V,E)$ is \emph{properly ordered}, and in this case, we denote the path of all maximal (resp. minimal) edges in $X_E$ by $x^{\max}$ (resp. $x^{\min}$).

A \emph{Bratteli-Vershik system} is a pair $(X_E,\phi_E)$, where $X_E$ is the path space of a properly ordered Bratteli diagram, and $\phi_E:X_E\to X_E$ is a map that is defined as follows. If $x=(x_1,x_2,x_3,\ldots)$ is in $X_E$ and $n$ is the smallest positive integer such that $x_n$ is not maximal (if such an $n$ exists), then
\[\phi_E(x)=(y_1,y_2,\ldots,y_{n-1},y_n,x_{n+1},x_{n+2},\ldots)\]
where $(y_1,y_2,\ldots,y_{n-1})$ is the unique finite path of minimal edges ending at $t(y_{n-1})$, and $y_n$ is the successor of $x_n$. If no such $n$ exists, then $x=x^{\max}$, and we set $\phi_E(x^{\max})=x^{\min}$.

The following observation will be important later: if $x$ is a path in $X_E$ such that neither $(x,x^{\max})$ nor $(x,x^{\min})$ are in $R_E$, then $\{(x,\phi_E^n(x))\mid n\in\bZ\}\subseteq R_E$. 

\section{Quotients of path spaces}

In this section we describe an equivalence relation, denoted $\sim_\xi$, on the path space $X_E$ of a Bratteli diagram $(V,E)$ and examine the resulting quotient space of $X_E$.

Let $(V,E)$ and $(W,F)$ be two Bratteli diagrams, $(V,E)$ with full edge connections, such that there exist two graph embeddings $\xi^0$ and $\xi^1$ of $(W,F)$ into $(V,E)$. We regard $\xi^0$ and $\xi^1$ as functions on both $W$ and $F$. More precisely, for both $j=0,1$, the following hold.

\begin{enumerate}[(i)]
    \item $\xi^j(w)$ is in $V_n$ for all $w$ in $W_n$ and all $n\geq0$.
    \item $\xi^j(f)$ is in $E_n$ for all $f$ in $F_n$ and all $n\geq1$.
    \item $i(\xi^j(f))=\xi^j(i(f))$ and $t(\xi^j(f))=\xi^j(t(f))$ for all $f$ in $F$.
    \item $\xi^j(f)\neq\xi^j(f')$ for all $f\neq f'$ in $F$.
\end{enumerate}
We also require that

\begin{enumerate}[(i)]
\setcounter{enumi}{4}
    \item $\xi^0(w)=\xi^1(w)$ for all $w$ in $W$.
    \item $\xi^0(F)\cap\xi^1(F)=\mt$.
\end{enumerate}

Due to (v), the functions $\xi^0,\xi^1:W\to V$ are identical, so we may denote them both by $\xi$.

The equivalence relation $\sim_\xi$ on $X_E$ is defined as follows. For $x=(x_1,x_2,x_3,\ldots)$ in $X_E$, suppose that there is a $j$ in $\{0,1\}$, an $n_0\geq0$, and edges $z_n$ in $F_n$ such that $x_n=\xi^j(z_n)$ for $n\geq n_0+1$. Moreover, suppose $n_0$ is the least integer with this property. If $n_0=0$, then
\[x\sim_\xi(\xi^{1-j}(z_1),\xi^{1-j}(z_2),\xi^{1-j}(z_3),\ldots).\]
If $n_0\geq1$ and $x_{n_0}$ is not in $\xi^{1-j}(F)$, then
\[x\sim_\xi(x_1,x_2,\ldots,x_{n_0},\xi^{1-j}(z_{n_0+1}),\xi^{1-j}(z_{n_0+2}),\ldots).\]
If $n_0\geq1$ and there is some $f$ in $F$ such that $x_{n_0}=\xi^{1-j}(f)$, then
\[x\sim_\xi(x_1,x_2,\ldots,x_{n_0-1},\xi^j(f),\xi^{1-j}(z_{n_0+1}),\xi^{1-j}(z_{n_0+2}),\ldots).\]
If none of the above occurs for $x$, then $x\sim_\xi x$ only. Each equivalence class thus consists of either one or two points. If $x\sim_\xi x'$ and $x\neq x'$, we will refer to the edge level $E_m$, where $m$ is the least integer such that $x_m\neq x_m'$, as the \textit{splitting level} of the equivalence class $\{x,x'\}$. We denote the quotient space $X_E/\sim_\xi$ by $X_\xi$ and let $\rho:X_E\to X_\xi$ denote the quotient map.

\begin{prop}\label{prod}
If $E=\xi^0(F)\cup\xi^1(F)$, then $X_E$ is homeomorphic to $X_F\times\{0,1\}^\bN$, and $X_\xi$ is homeomorphic to $X_F\times\bT$, where $\bT$ is the unit circle $\{z\in\bC\mid |z|=1\}$.
\end{prop}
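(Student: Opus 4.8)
The plan is to build the first homeomorphism explicitly and then show that $\sim_\xi$ on $X_E \cong X_F \times \{0,1\}^\bN$ becomes the standard identification on $\{0,1\}^\bN$ that produces the circle. First I would fix the bijection on edges: under the hypothesis $E = \xi^0(F) \sqcup \xi^1(F)$ (disjoint by condition (vi)), every edge $x_n \in E_n$ is $\xi^{j}(f)$ for a unique $f \in F_n$ and a unique $j \in \{0,1\}$. So given $x = (x_1, x_2, \ldots) \in X_E$, write $x_n = \xi^{j_n}(f_n)$; since $\xi$ respects $i$ and $t$ (condition (iii)) and $\xi^0 = \xi^1$ on vertices (condition (v)), the sequence $(f_1, f_2, \ldots)$ is a path in $X_F$, and $(j_1, j_2, \ldots) \in \{0,1\}^\bN$ is arbitrary. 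Define $\Psi : X_E \to X_F \times \{0,1\}^\bN$ by $\Psi(x) = ((f_n)_n, (j_n)_n)$. I would check $\Psi$ is a bijection (the inverse reassembles $x_n = \xi^{j_n}(f_n)$, which is a valid path by the same vertex-compatibility), and that it is continuous with continuous inverse: both spaces carry the natural product-of-cylinder topology, and $\Psi$ sends the cylinder $C(x_1, \ldots, x_n)$ to a product of a cylinder in $X_F$ with a cylinder in $\{0,1\}^\bN$ and vice versa, so $\Psi$ is a homeomorphism. (Here $\{0,1\}^\bN$ carries the product topology; I should note $X_F$ is compact metrizable, being a path space, so $X_E \cong X_F \times \{0,1\}^\bN$ is too, consistent with $X_E$ being a Cantor set when $(V,E)$ has full edge connections.)

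Next I would transport $\sim_\xi$ through $\Psi$. The key observation is that for $x$ as above, the integer $n_0$ in the definition of $\sim_\xi$ is exactly $\sup\{ n : j_n \neq j_{n+1}\}$ when this is finite — i.e., $x$ is $\sim_\xi$-related to something other than itself precisely when the tail of $(j_n)$ is eventually constant, and then $n_0$ is the last index at which $(j_n)$ changes value (with $n_0 = 0$ meaning $(j_n)$ is constant). I would go through the three cases of the definition and verify that in every case, $x \sim_\xi x'$ (with $x \neq x'$) corresponds under $\Psi$ to: same $X_F$-coordinate $(f_n)_n$, and the two $\{0,1\}^\bN$-sequences agree up to some index and are complementary (bit-flipped) thereafter. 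Concretely: if $(j_n) = (a_1, \ldots, a_{n_0}, b, b, b, \ldots)$ then its partner is $(a_1, \ldots, a_{n_0}, 1-b, 1-b, \ldots)$ — these are exactly the two binary expansions of a dyadic rational, so the quotient $\{0,1\}^\bN/{\sim}$ is the standard description of $\bT = [0,1]/(0 \sim 1)$ via binary expansions, and $\Psi$ descends to a bijection $X_\xi \to X_F \times \bT$.

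Finally I would argue this descended map is a homeomorphism. Since $\rho : X_E \to X_\xi$ and $\mathrm{id}_{X_F} \times q : X_F \times \{0,1\}^\bN \to X_F \times \bT$ (where $q$ is the binary-expansion quotient map onto the circle) are both quotient maps of compact Hausdorff spaces, and $\Psi$ intertwines the equivalence relations, the induced map $X_\xi \to X_F \times \bT$ is a continuous bijection from a compact space to a Hausdorff space, hence a homeomorphism. The one point needing care is that $X_\xi$ is Hausdorff — but this follows because $\sim_\xi$ is a closed equivalence relation on the compact metric space $X_E$ (the graph of $\sim_\xi$ is closed: a pair is in it iff the two paths agree outside some level $m$ and, at the levels where they differ, exhibit the $\xi^j/\xi^{1-j}$ pattern, which is a clopen-then-closed condition), or alternatively one simply cites that $q$ is a quotient map onto the Hausdorff space $\bT$ and transports this back. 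I expect the \textbf{main obstacle} to be the bookkeeping in matching the case division of the definition of $\sim_\xi$ (especially the third case, where $x_{n_0}$ itself is replaced by $\xi^j(f)$ rather than left alone) against the ``two binary expansions of a dyadic'' picture — it is entirely routine but must be done carefully to confirm that no pair of sequences related by $\sim_\xi$ fails to be of bit-flipped-tail form, and conversely that every such pair arises.
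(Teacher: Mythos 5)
Your proposal is correct and follows essentially the same route as the paper: decompose $X_E$ as $X_F\times\{0,1\}^\bN$ via the unique factorization $x_n=\xi^{j_n}(z_n)$, observe that $\sim_\xi$ acts only in the $\{0,1\}^\bN$ coordinate as the binary-expansion identification, and compare the two quotient maps (the paper phrases the last step as ``two quotient maps with identical fibres,'' rather than your compact-to-Hausdorff argument, but these are interchangeable). One minor slip: your displayed partner of $(a_1,\ldots,a_{n_0},b,b,\ldots)$ should be $(a_1,\ldots,a_{n_0-1},b,1-b,1-b,\ldots)$ with $a_{n_0}=1-b$ (the flip must include position $n_0$), exactly the pair $(\ldots,1,0,0,\ldots)\sim(\ldots,0,1,1,\ldots)$ you correctly identify in the surrounding prose.
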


\begin{proof}
Define the map $\phi:\{0,1\}^\bN\to \bT$ by $\phi(\{j_n\})=\exp\left(2\pi i\sum_{n=1}^\infty j_n2^{-n}\right)$, which is essentially the Cantor ternary function in a rather uncommon guise. Notice that two distinct sequences in $\{0,1\}^\bN$ have the same image under $\phi$ if and only if they are $(0,0,0,\ldots)$ and $(1,1,1,\ldots)$, or of the form
\[(j_1,j_2,\ldots,j_m,1,0,0,0,0,\ldots)\]
\[(j_1,j_2,\ldots,j_m,0,1,1,1,1,\ldots)\]
By our assumption and properties (iv) and (vi) of the embeddings, each $x$ in $X_E$ is uniquely determined by a path $z_x=(z_1,z_2,\ldots)$ in $X_F$ and a sequence $j_x=\{j_n\}$ in $\{0,1\}^\bN$ with $x_n=\xi^{j_n}(z_n)$ for all $n\geq1$. This gives an obvious continuous bijection from $X_E$ to $X_F\times\{0,1\}^\bN$, hence a homeomorphism. The composition of this homeomorphism with $\id_{X_F}\times\phi$ is a quotient map whose fibres are identical to the fibres of $\rho$.
\end{proof}

If the inclusion $E\supseteq\xi^0(F)\cup\xi^1(F)$ is proper, describing the space $X_\xi$ is slightly more complicated. In the upcoming lemma we describe some open sets in $X_E$ that remain open when passed to $X_\xi$.

\begin{definition}
Let $p=(p_1,p_2,\ldots,p_n)$ be a finite path in $(V,E)$. Denote
\[D_j(p)=\{x\in C(p)\mid x_k\in\xi^j(F)\FORAL k\geq n+1\}\]
for $j=0,1$.
\end{definition}

Recall that if $f:X\to Y$ is surjective, a subset $V$ of $X$ is called saturated if there is a subset $U$ of $Y$ with $V=f^{-1}(U)$.

\begin{lemma}\label{sat}
Let $p=(p_1,p_2,\ldots,p_n)$ be a finite path in $(V,E)$.
\begin{enumerate}[(i)]
    \item If $p_n$ is not in $\xi^0(F)\cup\xi^1(F)$, then $C(p)$ is clopen and saturated.
    \item If $p_n$ is in $\xi^0(F)\cup\xi^1(F)$, then
    \[U_p=C(p)-(D_0(p)\cup D_1(p))\]
    is open and saturated.
    \item Suppose that $k\geq m+1$ and $p$ and $q$ are two finite paths either of the form
    \[p=(\xi^0(z_1),\xi^0(z_2),\ldots,\xi^0(z_k))\]\[q=(\xi^1(z_1),\xi^1(z_2),\ldots,\xi^1(z_k))\]
    or of the form
    \[p=(x_1,x_2,\ldots,x_{m-1},x_m,\xi^0(z_{m+1}),\xi^0(z_{n+2}),\ldots,\xi^0(z_k))\]\[q=(x_1,x_2,\ldots,x_{m-1},x_m',\xi^1(z_{m+1}),\xi^1(z_{n+2}),\ldots,\xi^1(z_k))\]
    where $x_m$ is not in $\xi^0(F)$, $x_m'$ is not in $\xi^1(F)$, $x_m=x_m'$ if $x_m$ is not in $\xi^1(F)$, and $x_m'=\xi^0(f)$ if $x_m=\xi^1(f)$ for some $f$ in $F$. Then
    \[V_{p,q}=(C(p)-D_1(p))\cup(C(q)-D_0(q))\]
    is open and saturated.
\end{enumerate}
\end{lemma}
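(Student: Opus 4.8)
The plan is to handle \emph{openness} and \emph{saturatedness} separately. Openness is formal: the only point is that $D_j(p)$ is closed in $X_E$ for every finite path $p$ and $j\in\{0,1\}$, and this follows because $C(p)$ is closed and, if $x^{(\nu)}\to x$ with each $x^{(\nu)}\in D_j(p)$, then for each index $i$ one eventually has $x^{(\nu)}_i=x_i$, forcing $x_i\in\xi^j(F)$ for every $i$ beyond the length of $p$. Hence in (i) the cylinder $C(p)$ is clopen, and in (ii) and (iii) the sets $U_p$ and $V_{p,q}$ are obtained by deleting closed sets from clopen sets, so they are open.

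Saturatedness is the substance. Recall that a subset $S\subseteq X_E$ is saturated exactly when it is a union of $\sim_\xi$-classes; since each class has at most two points, we must show that whenever $x$ lies in the set under consideration and $x\sim_\xi x'$ with $x'\neq x$, then $x'$ lies there too. I would first record, directly from the three clauses of the definition of $\sim_\xi$, the following structural facts about such a pair: writing $j\in\{0,1\}$ and $n_0\geq0$ for the type of $x$ and the least level from which $x$ is a $\xi^j$-path, and $m$ for the splitting level of $\{x,x'\}$, one has $n_0\in\{m-1,m\}$; one has $x_i\in\xi^0(F)\cup\xi^1(F)$ for all $i\geq m$; and when $n_0\geq1$ one has $x_{n_0}\notin\xi^j(F)$ (minimality) and $x'_{n_0}\notin\xi^{1-j}(F)$. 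Given these, parts (i) and (ii) are immediate. For (i): a partner of $x\in C(p)$ with splitting level $m\leq n$ would force $x_n=p_n\in\xi^0(F)\cup\xi^1(F)$, against the hypothesis, so $m>n$ and $x'$ shares the first $n$ edges of $x$, i.e.\ $x'\in C(p)$. For (ii): a partner of $x\in U_p$ with $n_0\leq n$ would make $x$ a $\xi^j$-path from level $n+1$ onward, so $x\in D_j(p)$, contradicting $x\in U_p$; hence $n_0\geq n+1$ and $x'\in C(p)$, and if $x'\in D_i(p)$ then matching edges past level $n_0$ forces $i=1-j$ and then $x'_{n_0}\in\xi^{1-j}(F)$, contradicting the last structural fact; so $x'\in U_p$.

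Part (iii) is the main obstacle: it is a case analysis matching the clauses of $\sim_\xi$ against the two admissible forms of $(p,q)$. The forms are arranged so that past the splitting level $m$ the path $p$ runs on $\xi^0$-edges and $q$ on $\xi^1$-edges (in the first form the splitting level is $1$, so $k\geq2$), and the edges $x_m$ (in $p$) and $x_m'$ (in $q$) are related exactly as in the second and third clauses of the definition; extending $p$ by $\xi^0$-edges and $q$ by $\xi^1$-edges yields a $\sim_\xi$-pair, which is a useful consistency check but is not needed below. The symmetry exchanging $p\leftrightarrow q$ and $0\leftrightarrow1$ preserves $V_{p,q}$ and is compatible with the hypotheses, so we may take $x\in C(p)-D_1(p)$ with a partner $x'$ and show $x'\in V_{p,q}$. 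Writing $k$ for the length of $p$, the structural facts force the type and parameter of $x$ into one of three configurations: (a) $x$ is an all-$\xi^0$ path with $n_0=0$ (only in the first form for $p$), and then $x'$ is an all-$\xi^1$ path lying in $C(q)-D_0(q)$; (b) $x$ has type $0$ with $n_0=m$ (only in the second form for $p$), and then $x'$ lies in $C(q)-D_0(q)$ since its tail from level $m+1$ runs on $\xi^1$-edges; (c) $n_0\geq k+1$, and then $x'$ agrees with $x$ through level $k$, so $x'\in C(p)$, while $x'_{n_0}\notin\xi^1(F)$ (if $x$ has type $0$) or $x'$ runs on $\xi^0$-edges past level $n_0$ (if $x$ has type $1$), either way giving $x'\notin D_1(p)$. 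A type-$0$ partner with $m<n_0\leq k$ is impossible because $p_{n_0}$ would then be a $\xi^0$-edge, contradicting the minimality of $n_0$; a type-$1$ partner necessarily has $n_0\geq k$ because $p$ ends with $\xi^0$-edges, and $n_0=k$ is excluded because it would place $x$ in $D_1(p)$. In every surviving configuration $x'\in V_{p,q}$, as required.

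The only genuine difficulty is organizational: keeping the clause-by-clause correspondence between $\sim_\xi$ and the two forms of $(p,q)$ straight, and correctly tracking the alternative $x_m\in\xi^1(F)$ versus $x_m\notin\xi^1(F)$ at the splitting level. Once the structural facts about $\sim_\xi$ recorded above are in hand, no individual step is hard.
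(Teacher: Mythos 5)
Your proof is correct and follows essentially the same route as the paper: openness via the closedness of the sets $D_j(p)$, and saturation by tracking the splitting level (equivalently your parameter $n_0$) of a $\sim_\xi$-pair against the structure of $p$ and $q$. The only difference is one of detail — the paper's proof of (iii) compresses your cases (a), (b) and the exclusion arguments into the single assertion that the splitting level is either past $E_k$ or equal to $E_m$ or $E_{m+1}$, whereas you carry out that case analysis explicitly.
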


\begin{proof}
\begin{enumerate}[(i)]
    \item If $x$ is in $C(p)$ and $x\sim_\xi x'$, then the splitting level of $\{x,x'\}$ must be past level $E_n$ since $p_n$ is not in $\xi^0(F)\cup\xi^1(F)$, so $x'$ is in $C(p)$.
    \item If $x$ is in $U_p$ and $x\sim_\xi x'$, the removal of the sets $D_j(p)$ for $j=0,1$ forces the splitting level of $\{x,x'\}$ to be past level $E_n$, so $x$ and $x'$ coincide on the first $n$ edges and $x'$ is thus in $U_p$. It is straightforward to check that the sets $D_j(p)$ are closed, so $U_p$ is open.
    \item The proof that $V_{p,q}$ is open is analogous to that for (ii). If $x\in C(p)-D_1(p)$, $x\sim_\xi x'$, and the splitting level of $\{x,x'\}$ is past level $k$, then $x'$ is in $C(p)-D_1(p)$ similarly as in (ii). Otherwise, the splitting level must be either $E_m$ or $E_{m+1}$, but then $x'$ is in $C(q)-D_0(q)$.\qedhere
\end{enumerate}
\end{proof}

\begin{definition}
\begin{enumerate}[(i)]
    \item Let $\P$ denote all finite paths $p=(p_1,p_2,\ldots,p_n)$ in $(V,E)$ such that $p_n$ is not in $\xi^0(F)\cup\xi^1(F)$ but $t(p_n)$ is in $\xi(W)$.
    \item For $p$ in $\P$, define
    \[C_\xi(p)=\{x\in C(p)\mid x_k\in\xi^0(F)\cup\xi^1(F)\FORAL k\geq n+1\}\]
    To simplify notation, we will assume that there is an ``empty path" $p_0$ in $\P$ with the property that $C_\xi(p_0)=\{x\in X_E\mid x_k\in\xi^0(F)\cup\xi^1(F)\FORAL k\geq1\}$.
    \item For $w\in W_n$, define
\[X_F^{(w)}=\{z=(z_{n+1},z_{n+2},\ldots)\mid z_k\in F_k,t(z_k)=i(z_{k+1})\FORAL k\geq n+1,\AND i(z_{n+1})=w\}\]
that is, all infinite paths in $(W,F)$ that start at $w$.
\end{enumerate}
\end{definition}

Note that $C_\xi(p)$ is not equal to $D_0(p)\cup D_1(p)$, since edges in paths in $C_\xi(p)$ may alternate between the two embeddings. Regarding (iii) above, by fixing any finite path $p=(p_1,p_2,\ldots,p_n)$ in $(W,F)$ with $t(p_n)=w$, we may identify $X_F^{(w)}$ with $C(p)$ by associating $z$ with $pz$ (the concatenation of $p$ and $z$) and endow $X_F^{(w)}$ with the metric it receives from this identification. This makes $X_F^{(w)}$ a totally disconnected compact metric space.

\begin{lemma}\label{comp}
Let $p=(p_1,p_2,\ldots,p_n)$ be a finite path in $\P$ with $t(p_n)=\xi(w)$.
\begin{enumerate}[(i)]
    \item $C_\xi(p)$ is closed and saturated.
    \item $C_\xi(p)$ is homeomorphic to $X_F^{(w)}\times\{0,1\}^\bN$ and $\rho(C_\xi(p))$ is homeomorphic to $X_F^{(w)}\times \bT$.
    \item If $p\neq q$ in $\P$, there are disjoint clopen saturated sets $U$ and $V$ in $X_E$ such that $C_\xi(p)\subseteq U$ and $C_\xi(q)\subseteq V$; in particular, $\rho(C_\xi(p))$ and $\rho(C_\xi(q))$ are disjoint.
    \item For every $x$ in $X_E$ and $\delta>0$, there is an open saturated subset $U$ of $X_E$ such that $\rho^{-1}(\rho(x))\subseteq U\subseteq B(\rho^{-1}(\rho(x)),\delta)$.
    \item If $x$ is in $X_E-\bigcup_{p\in\P}C_\xi(p)$, then $U$ from part (iv) can be chosen to be clopen; it follows that $\{\rho(x)\}$ is a connected component of $X_\xi$ in this case.
\end{enumerate}
\end{lemma}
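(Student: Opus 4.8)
The plan rests on the level-by-level observation, immediate from the three clauses defining $\sim_\xi$, that if $x\sim_\xi x'$ then at every level $k$ either $x_k=x_k'$ or $\{x_k,x_k'\}=\{\xi^0(f),\xi^1(f)\}$ for some $f\in F_k$; in particular $x_k\in\xi^0(F)\cup\xi^1(F)$ exactly when $x_k'\in\xi^0(F)\cup\xi^1(F)$. For part (i), I would note that $C_\xi(p)$ is $C(p)$ intersected with the clopen sets $\{x:x_k\in\xi^0(F)\cup\xi^1(F)\}$, $k>n$, hence closed; and that if $x\in C_\xi(p)$ and $x'\sim_\xi x$, the observation forces $x_k'=x_k=p_k$ for $k\le n$ (as $p_k\notin\xi^0(F)\cup\xi^1(F)$) and $x_k'\in\xi^0(F)\cup\xi^1(F)$ for $k>n$, so $x'\in C_\xi(p)$. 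For part (ii), properties (iii), (iv), (vi) of the embeddings together with injectivity of $\xi$ on vertices show each $x\in C_\xi(p)$ is uniquely $(p_1,\dots,p_n,\xi^{j_{n+1}}(z_{n+1}),\xi^{j_{n+2}}(z_{n+2}),\dots)$ for a path $(z_{n+1},z_{n+2},\dots)$ in $X_F^{(w)}$ and a sequence $(j_{n+1},j_{n+2},\dots)$ in $\{0,1\}^\bN$; reading off these coordinates is locally constant, hence a continuous bijection, hence (by compactness) a homeomorphism $C_\xi(p)\cong X_F^{(w)}\times\{0,1\}^\bN$, which is the local version of the first half of Proposition~\ref{prod}. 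Matching the clauses of $\sim_\xi$ against Proposition~\ref{prod}'s description of when two elements of $\{0,1\}^\bN$ have the same image under $\phi$ would then show that $\sim_\xi$ restricted to $C_\xi(p)$ corresponds under this homeomorphism to the fibre relation of $\id_{X_F^{(w)}}\times\phi$. Since $C_\xi(p)$ is closed and saturated, $\rho|_{C_\xi(p)}$ is a quotient map onto its image; $\id_{X_F^{(w)}}\times\phi$ is a quotient map too (a continuous surjection from a compact space to a Hausdorff space); having the same fibres, they produce homeomorphic quotients, giving $\rho(C_\xi(p))\cong X_F^{(w)}\times\bT$.

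For part (iii), since $p_n\notin\xi^0(F)\cup\xi^1(F)$, Lemma~\ref{sat}(i) makes $C(p)$ clopen and saturated, and the observation above makes $\{x:x_m\in\xi^0(F)\cup\xi^1(F)\}$ clopen and saturated for every $m$. Given distinct $p,q$ in $\P$, I would distinguish two cases: either $C(p)\cap C(q)=\emptyset$, and then $U=C(p)$, $V=C(q)$ work; or, after relabelling, $C(q)\subseteq C(p)$ with $q$ extending $p$ and $|q|=m>|p|=n$ (the empty path $p_0$, read with $C(p_0)=X_E$, being an instance of this case), and then $q_m\notin\xi^0(F)\cup\xi^1(F)$ puts $C(q)$ inside $\{x:x_m\notin\xi^0(F)\cup\xi^1(F)\}$ while $m>n$ puts $C_\xi(p)$ inside $\{x:x_m\in\xi^0(F)\cup\xi^1(F)\}$, so $U=C(p)\cap\{x:x_m\in\xi^0(F)\cup\xi^1(F)\}$ and $V=C(p)-U$ work. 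In either case $U,V$ are disjoint clopen saturated sets with $C_\xi(p)\subseteq U$, $C_\xi(q)\subseteq V$; since disjoint saturated sets have disjoint images under $\rho$, the claim follows, and in particular the sets $C_\xi(p)$, $p\in\P$, are pairwise disjoint.

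For parts (iv) and (v), I would first establish the structural fact that every $x$ with $\rho^{-1}(\rho(x))\ne\{x\}$ lies in some $C_\xi(q)$, $q\in\P$ (possibly $q=p_0$): if $n_0$ is least with a tail of $x$ lying in one embedding $\xi^j$, then either $n_0=0$ and $x\in C_\xi(p_0)$, or minimality of $n_0$ forces $x_{n_0}\notin\xi^j(F)$, hence $x_k\in\xi^0(F)\cup\xi^1(F)$ for all $k\ge n_0$, and taking $l\le n_0$ to be the last index with $x_l\notin\xi^0(F)\cup\xi^1(F)$ (or $l=0$ if none) the path $(x_1,\dots,x_l)$ lies in $\P$ and $x\in C_\xi(x_1,\dots,x_l)$. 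Then, given $x$ and $\delta>0$, I would pick $N'$ so large that length-$N'$ cylinders have diameter below $\delta$ and split into three cases. If $x\notin\bigcup_{p\in\P}C_\xi(p)$, then $x$ has no partner and, by the structural fact, $x_{N'}\notin\xi^0(F)\cup\xi^1(F)$ for arbitrarily large $N'$; for such an $N'$, $C(x_1,\dots,x_{N'})$ is clopen and saturated by Lemma~\ref{sat}(i), contains $x$, and lies in $B(x,\delta)$ --- this proves (iv) here, and also proves (v), since these clopen sets push forward under $\rho$ to a clopen neighbourhood basis of $\rho(x)$, forcing the connected component of $\rho(x)$ to be $\{\rho(x)\}$. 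If $x\in C_\xi(p)$ has no partner, then its $\{0,1\}^\bN$-coordinate is not eventually constant, so $x\notin D_0(p')\cup D_1(p')$ for $p'=(x_1,\dots,x_{N'})$ with $N'>|p|$, and $U_{p'}$ of Lemma~\ref{sat}(ii) is open, saturated, contains $x$, and lies in $B(x,\delta)$. Finally, if $x$ has a partner $x'$, both lie in some $C_\xi(q)$, and the common prefix of $x$ and $x'$, extended by sufficiently long $\xi^0$- and $\xi^1$-continuations of the underlying $F$-path, forms a pair $P,Q$ of one of the two shapes permitted in Lemma~\ref{sat}(iii), with $x$ and $x'$ placed on the sides dictated by which of them carries the $\xi^0$-tail; then $V_{P,Q}=(C(P)-D_1(P))\cup(C(Q)-D_0(Q))$ is open, saturated, contains $x$ and $x'$, and lies in $B(\{x,x'\},\delta)$, since $C(P)$ and $C(Q)$ agree on their first $N'$ edges with $x$ and with $x'$.

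The main obstacle is the two pieces of bookkeeping: in part (ii), matching each clause of $\sim_\xi$ against the two shapes of $\phi$-collapse from Proposition~\ref{prod}; and, in the last case of part (iv), matching the partner relation against the two forms of Lemma~\ref{sat}(iii), where one must track the splitting level relative to the index $n_0$ and decide which of $x$, $x'$ goes on which side so that the hypotheses on $x_m$ and $x_m'$ in Lemma~\ref{sat}(iii) are satisfied. Everything else follows mechanically from the level-by-level observation, Lemma~\ref{sat}, and the fact that a continuous surjection from a compact space onto a Hausdorff space is a quotient map.
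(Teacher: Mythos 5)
Your proposal is correct and follows essentially the same route as the paper: part (i) via intersections of clopen saturated sets, part (ii) by reducing to the argument of Proposition \ref{prod}, part (iii) by the dichotomy on disjoint versus nested cylinders, and parts (iv)--(v) by the same case analysis through Lemma \ref{sat}. The only blemishes are two harmless misstatements --- in (i) you justify $x_k'=x_k$ for all $k\le n$ by asserting $p_k\notin\xi^0(F)\cup\xi^1(F)$, though membership in $\P$ only constrains $p_n$ (the correct reason is that the splitting level of $\{x,x'\}$ must exceed $n$), and in your structural fact the clause that $x_k\in\xi^0(F)\cup\xi^1(F)$ for all $k\ge n_0$ can fail at $k=n_0$ itself --- but in both places the conclusion you actually use is recovered by the splitting-level argument or by your choice of the index $l$.
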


\begin{proof}
\begin{enumerate}[(i)]
    \item For $m\geq n+1$, let $A_m$ be the finite union of all cylinder sets of the form
    \[C(p_1,p_2,\ldots,p_n,q_{n+1},q_{n+2},\ldots,q_m)\]
    where $q_k$ is in $\xi^0(F)\cup\xi^1(F)$ for $n+1\leq k\leq m$. Each $A_m$ is closed and $C_\xi(p)=\bigcap_{m=n+1}^\infty A_m$. The set $C_\xi(p)$ is saturated since $p_n$ is not in $\xi^0(F)\cup\xi^1(F)$.
    \item Each $x$ in $C_\xi(p)$ is uniquely determined by a path in $X_F^{(w)}$ and a sequence in $\{0,1\}^\bN$, and the proof then proceeds analogously to that of Proposition \ref{prod}.
    \item If $C(p)\cap C(q)=\mt$, then $U=C(p)$ and $V=C(q)$ suffice. If $C(p)\subseteq C(q)$, take $U=C(p)$ and $V=X_E-C(p)$.
    \item We consider two cases.
    \begin{enumerate}
        \item Suppose that $\rho^{-1}(\rho(x))=\{x\}$. Choose $k\geq1$ such that $2^{-k}<\delta$, let $p=(x_1,\ldots,x_k)$, and let $U=U_p$ as in part (ii) of Lemma \ref{sat}. Observe that $x$ is not in $D_0(p)\cup D_1(p)$ because otherwise $\rho^{-1}(\rho(x))$ would consist of two points.
        \item If $\rho^{-1}(\rho(x))=\{x,x'\}$ and $E_m$ is the splitting level, choose $k\geq m$ with $2^{-k}<\delta$, let $p=(x_1,\ldots,x_k)$ and $q=(x_1',\ldots,x_k')$, and let $U=V_{p,q}$ from part (iii) of Lemma \ref{sat}.
    \end{enumerate}
    \item There are infintely many edges $x_{n_1},x_{n_2},\ldots$ of $x$ that are not in $\xi^0(F)\cup\xi^1(F)$. Choose $k\geq1$ such that $2^{-n_k}<\delta$, let $U=C(x_1,\ldots,x_{n_k})$, and use part (i) of Lemma \ref{sat}.\qedhere
\end{enumerate}
\end{proof}

\begin{prop}\label{metric}
\begin{enumerate}[(i)]
    \item The quotient map $\rho:X_E\to X_\xi$ is closed.
    \item The space $X_\xi$ is second-countable, compact, and Hausdorff, hence metrizable.
    \item Every connected component of $X_\xi$ is either a single point or homeomorphic to $\bT$.
    \item The covering dimension of $X_\xi$ is $1$.
\end{enumerate}
\end{prop}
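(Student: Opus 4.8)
The plan is to prove the four parts roughly in order, using the open saturated sets constructed in Lemmas \ref{sat} and \ref{comp} as the main technical input, and leveraging the fact that $X_E$ is a compact metrizable zero-dimensional space so that $\rho$ is a quotient of a compact metric space by an equivalence relation with classes of size at most two.

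For part (i), I would show $\rho$ is a closed map by a direct argument: if $A\subseteq X_E$ is closed, then $\rho^{-1}(\rho(A))$ is the union of $A$ with the set of $\sim_\xi$-partners of points of $A$; I would argue this set is closed. The key point is that the ``partner map'' is continuous on the set where it is defined, or more precisely that a limit of points whose partners lie in $A$ either has its partner in $A$ (by closedness of $A$ and continuity of the partnering, which changes only finitely many coordinates in a locally controlled way) or is itself a limit point already in $A$. Alternatively, and perhaps more cleanly, one can invoke that $X_E$ is compact and $X_\xi$ is Hausdorff once part (ii) is known — but since (ii) will use (i), I would instead give the hands-on argument: cover $A$ finitely by cylinder sets at a level past the splitting levels involved and check saturation of the resulting neighborhoods using Lemma \ref{sat}. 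With $\rho$ closed, part (ii) follows quickly: a closed continuous surjection from a compact space yields a compact image; Hausdorffness follows because distinct points of $X_\xi$ have disjoint saturated open preimages — if $\rho(x)\neq\rho(y)$, then $\rho^{-1}(\rho(x))$ and $\rho^{-1}(\rho(y))$ are disjoint compact sets in the metric space $X_E$, and part (iv) of Lemma \ref{comp} (or directly Lemma \ref{sat}) supplies disjoint open saturated sets separating them; these descend to disjoint open sets in $X_\xi$. Second-countability descends from $X_E$ since $\rho$ is an open-ish quotient on a countable saturated base (the images of the sets $U_p$, $V_{p,q}$, and cylinder sets with last edge outside $\xi^0(F)\cup\xi^1(F)$ form a countable base for $X_\xi$). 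Urysohn's metrization theorem then gives metrizability.

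For part (iii), I would take a point $\rho(x)\in X_\xi$ and distinguish cases exactly as in Lemma \ref{comp}(v): if $x\notin\bigcup_{p\in\P}C_\xi(p)$, then Lemma \ref{comp}(v) already says $\{\rho(x)\}$ is a connected component. Otherwise $x\in C_\xi(p)$ for a unique $p\in\P$ (uniqueness by Lemma \ref{comp}(iii)), and Lemma \ref{comp}(ii) identifies $\rho(C_\xi(p))$ with $X_F^{(w)}\times\bT$, whose connected components are $\{z\}\times\bT\cong\bT$ since $X_F^{(w)}$ is totally disconnected. Since $\rho(C_\xi(p))$ is clopen in $X_\xi$ by Lemma \ref{comp}(iii), its connected components are connected components of $X_\xi$. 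This exhausts all points of $X_\xi$, proving (iii). The one subtlety to check is that $\rho(C_\xi(p))$ is genuinely clopen (not just saturated-with-clopen-preimage): this is immediate from Lemma \ref{comp}(iii) giving a clopen saturated $U$ with $C_\xi(p)\subseteq U$ and $U$ disjoint from all other $C_\xi(q)$, and from $C_\xi(p)$ being closed saturated — one intersects to pin down $\rho^{-1}(\rho(C_\xi(p)))$ as clopen.

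For part (iv), the covering dimension: the lower bound $\dim X_\xi\geq 1$ is immediate because $X_\xi$ contains a copy of $\bT$ (any $\rho(C_\xi(p))$ with $X_F^{(w)}\neq\varnothing$ contains a circle as a retract, or one notes $\bT$ embeds as a closed subset and dimension is monotone for closed subsets of normal spaces). For the upper bound $\dim X_\xi\leq 1$, I would use the standard characterization that a compact metric space has covering dimension $\leq 1$ iff it has arbitrarily fine finite open covers of order $\leq 2$ (every point in at most two members). Given $\delta>0$, pull back: choose a finite partition of $X_E$ into clopen cylinder sets of diameter $<\delta$; group them into saturated pieces using Lemma \ref{sat}(i) for those with last edge outside $\xi^0(F)\cup\xi^1(F)$, and for the sets $C_\xi(p)$ use the homeomorphism with $X_F^{(w)}\times\bT$ together with the fact that $\bT$ has open covers of order $2$ by two arcs, crossed with a fine clopen partition of $X_F^{(w)}$. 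Assembling these gives a finite open cover of $X_\xi$ of mesh $<\delta$ (in the downstairs metric) and order $\leq 2$: the only overlaps are the two-arc overlaps within each circle factor, which contribute multiplicity exactly $2$. The main obstacle, and the step I would be most careful about, is this assembly — ensuring that the saturated open sets coming from the circle-overlaps in different $C_\xi(p)$'s, and their interface with the zero-dimensional ``point'' part of $X_\xi$, do not create triple overlaps; this requires choosing the two-arc covers of the circles to be subordinate to a partition of unity that vanishes near the boundary where $C_\xi(p)$ meets $U_p$-type sets, exactly the kind of bookkeeping that Lemma \ref{sat} and Lemma \ref{comp}(iv) are designed to make routine. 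Once the order-$\leq 2$ covers are in hand, $\dim X_\xi\leq 1$ follows, and combined with the lower bound we get $\dim X_\xi=1$.
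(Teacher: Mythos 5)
Your treatment of (i) and (ii) is essentially sound and close in spirit to the paper's: the paper proves closedness by taking a closed $F$ and a point $y\notin\rho(F)$, using part (iv) of Lemma \ref{comp} to find a saturated open set around $\rho^{-1}(y)$ missing $F$; your closed-graph/``partner map'' argument also works (the relation $\sim_\xi$ is closed in $X_E\times X_E$ because splitting levels either stay bounded along a convergent sequence, forcing the limits to be $\sim_\xi$-related, or tend to infinity, forcing the limits to coincide), and the paper then gets (ii) by citing that a perfect map preserves Hausdorffness and second countability.

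There is, however, a genuine error in your (iii): the sets $\rho(C_\xi(p))$ are \emph{not} clopen in $X_\xi$ in general, only closed. Lemma \ref{comp}(iii) separates $C_\xi(p)$ from each \emph{single} $C_\xi(q)$ by a clopen set depending on $q$; you cannot intersect these over the infinitely many $q$ and stay open. Concretely, in Example \ref{3} the point $(0,0,0,\ldots)\in C_\xi(p_0)$ is a limit of points of $C_\xi((0,\ldots,0,2))$, so $C_\xi(p_0)$, though saturated and closed, is not open, and the unit circle $\rho(C_\xi(p_0))$ is an accumulation set of the smaller circles. The correct route (the paper's) does not need openness: a component $Y$ with more than one point cannot contain any $\rho(x)$ with $x\notin\bigcup_p C_\xi(p)$ (such points are singleton components by Lemma \ref{comp}(v)), hence $Y\subseteq\bigcup_p\rho(C_\xi(p))$; it cannot meet two distinct $\rho(C_\xi(p))$'s by the pairwise clopen separation of Lemma \ref{comp}(iii); and inside a single $\rho(C_\xi(p))\cong X_F^{(w)}\times\bT$ connectedness plus total disconnectedness of $X_F^{(w)}$ pins $Y$ down to $\{z\}\times\bT$, which equals $Y$ by maximality.

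The same non-openness undermines your plan for the upper bound in (iv): the two arcs covering a circle $\{z\}\times\bT$ are not open in $X_\xi$, so they must be fattened into genuine open sets, which then spill into the accumulating circles and singleton components; controlling the order of the resulting cover is exactly the step you defer, and it is not routine. The paper avoids this entirely by working upstairs in $X_E$: fix $n$ so that $d_E(x,y)<2^{-n}$ forces $d(\rho(x),\rho(y))<\ep/2$, let $\U_0$ consist of the pairwise disjoint saturated open sets $C(p)$ (when $p_n\notin\xi^0(F)\cup\xi^1(F)$) or $U_p$ (otherwise) over all length-$n$ paths, and let $\U_1$ consist of the pairwise disjoint sets $V_{p,q}$ (with $k=n$) covering the two-point classes whose splitting level is at most $n$. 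The images form a cover of $X_\xi$ by two families of pairwise disjoint open sets of diameter less than $\ep$, which gives $\dim X_\xi\le1$ directly, with no bookkeeping about triple overlaps. I would recommend adopting this construction rather than attempting to repair the arc-based one.
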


\begin{proof}
\begin{enumerate}[(i)]
    \item Let $F$ be closed in $X_E$ and $y$ a point not in $\rho(F)$. Then $F\cap\rho^{-1}(y)=\mt$, so $F\cap B(\rho^{-1}(y),\delta)=\mt$ for some $\delta>0$. By part (iv) of Lemma \ref{comp}, we can find an open saturated set $U$ of $X_E$ with $\rho^{-1}(y)\subseteq U$ and $F\cap U=\mt$. Then $\rho(U)$ is a neighbourhood of $y$ disjoint from $\rho(F)$.
    \item A closed continuous surjective map with compact fibres preserves the Hausdorff property and the second-countable property, see Section 31, Exercise 7 of \cite{munkres}. Metrizability follows from the Urysohn metrization theorem.
    \item Let $Y$ be a connected component of $X_\xi$. If $Y$ consists of more than one point, it must be contained in $\bigcup_{p\in\P}\rho(C_\xi(p))$ by part (v) of Lemma \ref{comp}. By part (iii) of Lemma \ref{comp}, it must be contained in a single $\rho(C_\xi(p))$. The conclusion then follows from part (ii) of Lemma \ref{comp} since $X_F^{(w)}$ is totally disconnected.
    \item By part (ii) there is a metric $d$ which gives the topology of $X_\xi$. The space $X_\xi$ contains at least one homeomorphic copy of $\bT$, so it suffices to prove that $\dim X_\xi\leq1$. We can show this by showing that, for any $\ep>0$, we can find two collections $\U_0$ and $\U_1$ of pairwise disjoint open saturated subsets of $X_E$ which, when taken together, cover $X_E$ and $\diam\rho(U)<\ep$ for all $U$ in $\U_0\cup\U_1$. Choose $n\geq1$ such that $d_E(x,y)<2^{-n}$ implies that $d(\rho(x),\rho(y))<\frac\ep2$, and consider a finite path $p=(p_1,\ldots,p_n)$. If $p_n$ is not in $\xi^0(F)\cup\xi^1(F)$, put $C(p)$ in $\U_0$. Otherwise, put the set $U_p$ from part (ii) of Lemma \ref{sat} in $\U_0$. By the definition of the sets $U_p$, the remaining paths possibly not covered by the elements of $\U_0$ are the two-point equivalence classes whose splitting levels are among the levels $E_1,E_2,\ldots,E_n$. To cover these, let $\U_1$ consist of all sets of the form $V_{p,q}$ from part (iii) of Lemma \ref{sat}, where $k=n$ and $1\leq m\leq n-1$.\qedhere
\end{enumerate}
\end{proof}

\begin{example}\label{simp}
Let $(V,E)$ be the Bratteli diagram with $\#V_n=1$ for all $n$ and $\#E_n=2$ for all $n$ (the $2^\infty$ diagram), while $(W,F)$ is the diagram with $\#W_n=\#F_n=1$ for all $n$. Embed one copy of $(W,F)$ on the left and the other copy on the right. Then $X_\xi$ is homeomorphic to $\bT$ by Proposition \ref{prod}.
\end{example}

\begin{example}\label{cantor}
Let $(V,E)$ be the Bratteli diagram with $\#V_n=1$ for all $n$ and $\#E_n=4$ for all $n$ (the $4^\infty$ diagram), while $(W,F)$ is the diagram with $\#W_n=1$ and $\#F_n=2$ for all $n$. Embed one copy of $(W,F)$ on the left and the other copy on the right. Then $X_\xi$ is homeomorphic to $X_F\times \bT$ by Proposition \ref{prod}. By identifying $X_F$ with the middle thirds Cantor set $X\subseteq[1,2]$ (shifted to the right from its usual position), we can identify $X_\xi$ with the planar set $\bigcup_{x\in X}x\bT$, where $x\bT=\{xz\mid z\in \bT\}$.
\end{example}

\begin{example}\label{2}
Consider the Bratteli diagram in Figure \ref{1/n}. Let $(W,F)$ be the diagram with $\#W_n=\#F_n=1$ for all $n$. Embed $(W,F)$ down the bottom edges of $(V,E)$. Then $X_\xi$ is homeomorphic to $(\bigcup_{n=0}^\infty2^{-n}\bT)\cup\{0\}$. Indeed, any finite path ending in a diagonal edge is in $\P$, and $\rho(C_\xi(p))$ is clopen and homeomorphic to $\bT$ for each such path. The only path not in $\bigcup_{p\in\P}C_\xi(p)$ is the path along the top of the diagram, which corresponds to the point $0$.
\begin{figure}[ht]
    \begin{tikzpicture}
     \filldraw[black] (0,0) circle (2pt);
     \filldraw[black] (2,1) circle (2pt);
     \filldraw[black] (2,-1) circle (2pt);
     \filldraw[black] (4,1) circle (2pt);
     \filldraw[black] (4,-1) circle (2pt);
     \filldraw[black] (6,1) circle (2pt);
     \filldraw[black] (6,-1) circle (2pt);
     \filldraw[black] (6.7,0) circle (1pt);
     \filldraw[black] (7,0) circle (1pt);
     \filldraw[black] (7.3,0) circle (1pt);
     \draw[black,thick] (0,0) -- (2,1);
     \draw[black,thick] (2,-1) .. controls (3,-1.25) .. (4,-1);
     \draw[black,thick] (0,0) .. controls (0.9,-0.7) .. (2,-1);
     \draw[black,thick] (0,0) .. controls (1.1,-0.3) .. (2,-1);
     \draw[black,thick] (2,1) -- (4,1);
     \draw[black,thick] (4,1) -- (6,1);
     \draw[black,thick] (6,1) -- (6.3,0.75);
     \draw[black,thick] (6,1) -- (6.3,1);
     \draw[black,thick] (6,-1) -- (6.3,-0.9);
     \draw[black,thick] (6,-1) -- (6.3,-1.1);
     \draw[black,thick] (2,1) -- (4,-1);
     \draw[black,thick] (4,1) -- (6,-1);
     \draw[black,thick] (4,-1) .. controls (5,-0.75) .. (6,-1);
     \draw[black,thick] (4,-1) .. controls (5,-1.25) .. (6,-1);
     \draw[black,thick] (2,-1) .. controls (3,-0.75) .. (4,-1);
     \end{tikzpicture}
    \caption{The Bratteli diagram $(V,E)$ in Example \ref{2}}
    \label{1/n}
\end{figure}
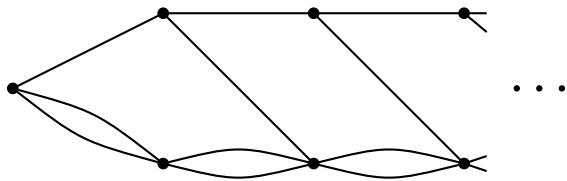
\end{example}

\begin{example}\label{3}
Let $(V,E)$ be the Bratteli diagram with $\#V_n=1$ for all $n$ and $\#E_n=3$ for all $n$ (the $3^\infty$ diagram), while $(W,F)$ is the diagram with $\#W_n=\#F_n=1$ for all $n$. For every finite path $p=(p_1,p_2,\ldots,p_n)$ in $\P$, $\rho(C_\xi(p))$ is homeomorphic to $\bT$ by part (ii) of Lemma \ref{comp}. To help illustrate $X_\xi$, we note that $\bigcup_{p\in\P}C_\xi(p)$ is dense in $X_E$ and construct a continuous map from $\bigcup_{p\in\P}C_\xi(p)$ to the plane $\bC$ which is injective except on the fibres of $\rho$. We will identify $X_E$ with $\{0,1,2\}^\bN$ and regard the edges labelled with $j$ the edges in $\xi^j(F)$ for $j=0,1$.

First define $\theta_2=0$ and for a finite path $p=(p_1,p_2,\ldots,p_n)$ in $\{0,1\}^{n-1}\times\{2\}$ for $n\geq2$, set $\theta_p=2^{-n}+\sum_{k=1}^{n-1}p_k2^{-k}$. Define the function $f_p:\bC\to\bC$ by
\[f_p(z)=e^{2\pi i\theta_p}(2^{-n}z+(1+2^{-(n-1)}))\]
Now suppose that $p$ in $\P$ is arbitrary, and partition $p=p^{(1)}p^{(2)}\cdots p^{(m)}$ where each $p^{(k)}$ is in $\{0,1\}^{l_k}\times\{2\}$ for some integer $l_k\geq0$ (note that such a partition exists and is unique). Define
\[f_p=f_{p^{(1)}}\circ f_{p^{(2)}}\circ\cdots\circ f_{p^{(m)}}\]
If $p_0$ is the empty path, let $f_{p_0}(z)=z$. If $x=(x_1,x_2,\ldots)$ is in $C_\xi(p)$, define
\[\rho(x)=f_p\left(\exp\left(2\pi i\sum_{k=n+1}^\infty x_k2^{-k}\right)\right)\]
Letting $X_n=\bigcup_{\text{length}(p)\leq n-1}C_\xi(p)$ (where $\text{length}(p)$ is the number of edges in $p$ with the convention that $\text{length}(p_0)=0$), we have $\rho(X_1)$ the unit circle and $\rho(X_2)$ is the disjoint union of the unit circle and the circle of radius $\frac12$ centred at $2$. Figure \ref{circlepictures} illustrates $\rho(X_n)$ for $n=3,4,5,6$.

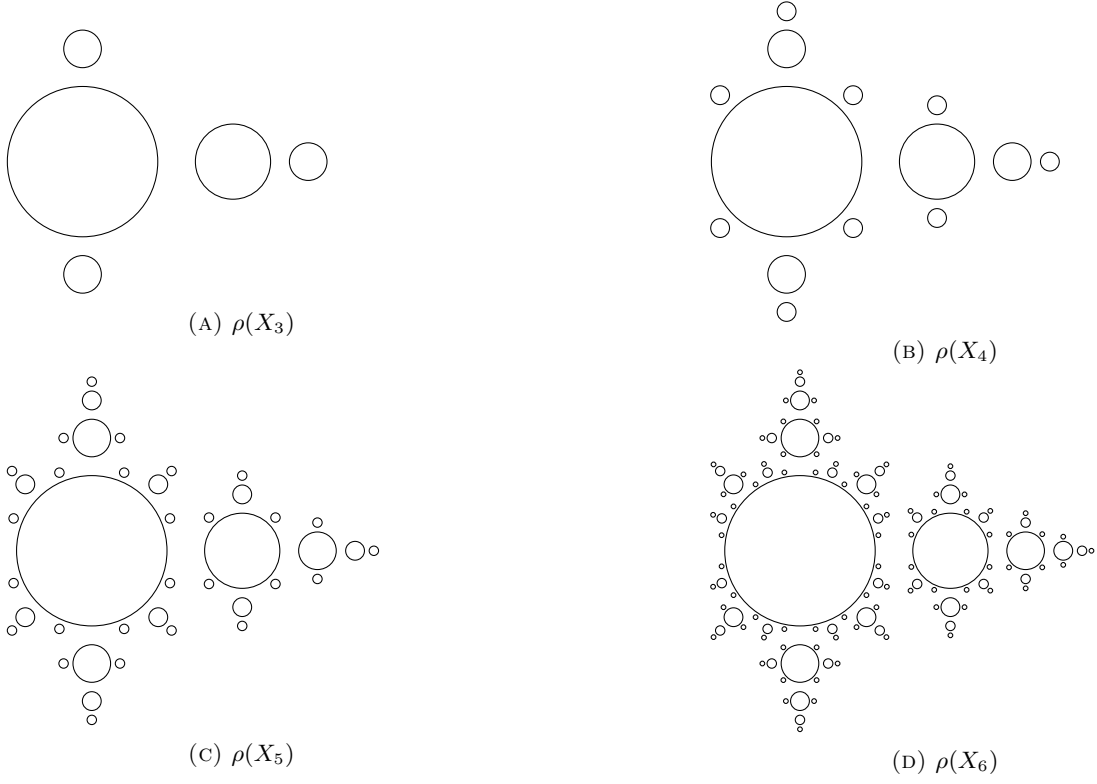
\begin{figure}[ht]

\begin{subfigure}{0.4\textwidth}
\begin{tikzpicture}
\draw(0,0) circle (1cm);
\draw(2,0) circle (0.5cm);
\draw(0,1.5) circle (0.25cm);
\draw(0,-1.5) circle (0.25cm);
\draw(3,0) circle (0.25cm);
\end{tikzpicture}
\caption{$\rho(X_3)$}
\end{subfigure}
\hfill
\begin{subfigure}{0.4\textwidth}
\begin{tikzpicture}
\draw(0,0) circle (1cm);
\draw(2,0) circle (0.5cm);
\draw(0,1.5) circle (0.25cm);
\draw(0,-1.5) circle (0.25cm);
\draw(3,0) circle (0.25cm);
\draw(-0.8838834765,0.8838834765) circle (0.125cm);
\draw(-0.8838834765,-0.8838834765) circle (0.125cm);
\draw(0.8838834765,0.8838834765) circle (0.125cm);
\draw(0.8838834765,-0.8838834765) circle (0.125cm);
\draw(0,2) circle (0.125cm);
\draw(0,-2) circle (0.125cm);
\draw(2,-0.75) circle (0.125cm);
\draw(2,0.75) circle (0.125cm);
\draw(3.5,0) circle (0.125cm);
\end{tikzpicture}
\caption{$\rho(X_4)$}
\end{subfigure}

\begin{subfigure}{0.4\textwidth}
\begin{tikzpicture}
\draw(0,0) circle (1cm);
\draw(2,0) circle (0.5cm);
\draw(0,1.5) circle (0.25cm);
\draw(0,-1.5) circle (0.25cm);
\draw(3,0) circle (0.25cm);
\draw(-0.8838834765,0.8838834765) circle (0.125cm);
\draw(-0.8838834765,-0.8838834765) circle (0.125cm);
\draw(0.8838834765,0.8838834765) circle (0.125cm);
\draw(0.8838834765,-0.8838834765) circle (0.125cm);
\draw(0,2) circle (0.125cm);
\draw(0,-2) circle (0.125cm);
\draw(2,-0.75) circle (0.125cm);
\draw(2,0.75) circle (0.125cm);
\draw(3.5,0) circle (0.125cm);
\draw(3.75,0) circle (0.0625cm);
\draw(3,0.375) circle (0.0625cm);
\draw(3,-0.375) circle (0.0625cm);
\draw(2.4419417383,-0.4419417383) circle (0.0625cm);
\draw(2.4419417383,0.4419417383) circle (0.0625cm);
\draw(1.5580582618,-0.4419417383) circle (0.0625cm);
\draw(1.5580582618,0.4419417383) circle (0.0625cm);
\draw(2,1) circle (0.0625cm);
\draw(2,-1) circle (0.0625cm);
\draw(0,2.25) circle (0.0625cm);
\draw(0,-2.25) circle (0.0625cm);
\draw(-0.375,-1.5) circle (0.0625cm);
\draw(-0.375,1.5) circle (0.0625cm);
\draw(0.375,-1.5) circle (0.0625cm);
\draw(0.375,1.5) circle (0.0625cm);
\draw(1.0606601718,1.0606601718) circle (0.0625cm);
\draw(-1.0606601718,1.0606601718) circle (0.0625cm);
\draw(1.0606601718,-1.0606601718) circle (0.0625cm);
\draw(-1.0606601718,-1.0606601718) circle (0.0625cm);
\draw(1.0393644741,0.4305188614) circle (0.0625cm);
\draw(1.0393644741,-0.4305188614) circle (0.0625cm);
\draw(-1.0393644741,0.4305188614) circle (0.0625cm);
\draw(-1.0393644741,-0.4305188614) circle (0.0625cm);
\draw(0.4305188614,1.0393644741) circle (0.0625cm);
\draw(0.4305188614,-1.0393644741) circle (0.0625cm);
\draw(-0.4305188614,1.0393644741) circle (0.0625cm);
\draw(-0.4305188614,-1.0393644741) circle (0.0625cm);
\end{tikzpicture}
\caption{$\rho(X_5)$}
\end{subfigure}
\hfill
\begin{subfigure}{0.4\textwidth}
\begin{tikzpicture}
\draw(0,0) circle (1cm);
\draw(2,0) circle (0.5cm);
\draw(0,1.5) circle (0.25cm);
\draw(0,-1.5) circle (0.25cm);
\draw(3,0) circle (0.25cm);
\draw(-0.8838834765,0.8838834765) circle (0.125cm);
\draw(-0.8838834765,-0.8838834765) circle (0.125cm);
\draw(0.8838834765,0.8838834765) circle (0.125cm);
\draw(0.8838834765,-0.8838834765) circle (0.125cm);
\draw(0,2) circle (0.125cm);
\draw(0,-2) circle (0.125cm);
\draw(2,-0.75) circle (0.125cm);
\draw(2,0.75) circle (0.125cm);
\draw(3.5,0) circle (0.125cm);
\draw(3.75,0) circle (0.0625cm);
\draw(3,0.375) circle (0.0625cm);
\draw(3,-0.375) circle (0.0625cm);
\draw(2.4419417383,-0.4419417383) circle (0.0625cm);
\draw(2.4419417383,0.4419417383) circle (0.0625cm);
\draw(1.5580582618,-0.4419417383) circle (0.0625cm);
\draw(1.5580582618,0.4419417383) circle (0.0625cm);
\draw(2,1) circle (0.0625cm);
\draw(2,-1) circle (0.0625cm);
\draw(0,2.25) circle (0.0625cm);
\draw(0,-2.25) circle (0.0625cm);
\draw(0.375,-1.5) circle (0.0625cm);
\draw(0.375,1.5) circle (0.0625cm);
\draw(-0.375,-1.5) circle (0.0625cm);
\draw(-0.375,1.5) circle (0.0625cm);
\draw(1.0606601718,1.0606601718) circle (0.0625cm);
\draw(-1.0606601718,1.0606601718) circle (0.0625cm);
\draw(1.0606601718,-1.0606601718) circle (0.0625cm);
\draw(-1.0606601718,-1.0606601718) circle (0.0625cm);
\draw(1.0393644741,0.4305188614) circle (0.0625cm);
\draw(1.0393644741,-0.4305188614) circle (0.0625cm);
\draw(-1.0393644741,0.4305188614) circle (0.0625cm);
\draw(-1.0393644741,-0.4305188614) circle (0.0625cm);
\draw(0.4305188614,1.0393644741) circle (0.0625cm);
\draw(0.4305188614,-1.0393644741) circle (0.0625cm);
\draw(-0.4305188614,1.0393644741) circle (0.0625cm);
\draw(-0.4305188614,-1.0393644741) circle (0.0625cm);
\draw(3.875,0) circle (0.03125cm);
\draw(3.2209708691,0.2209708691) circle (0.03125cm);
\draw(3.2209708691,-0.2209708691) circle (0.03125cm);
\draw(3,0.5) circle (0.03125cm);
\draw(3,-0.5) circle (0.03125cm);
\draw(2.7790291309,0.2209708691) circle (0.03125cm);
\draw(2.7790291309,-0.2209708691) circle (0.03125cm);
\draw(3.5,0.1875) circle (0.03125cm);
\draw(3.5,-0.1875) circle (0.03125cm);
\draw(2,1.125) circle (0.03125cm);
\draw(2,-1.125) circle (0.03125cm);
\draw(1.8125,0.75) circle (0.03125cm);
\draw(2.1875,0.75) circle (0.03125cm);
\draw(1.8125,-0.75) circle (0.03125cm);
\draw(2.1875,-0.75) circle (0.03125cm);
\draw(2.5303300859,0.5303300859) circle (0.03125cm);
\draw(2.5303300859,-0.5303300859) circle (0.03125cm);
\draw(1.4696699141,0.5303300859) circle (0.03125cm);
\draw(1.4696699141,-0.5303300859) circle (0.03125cm);
\draw(2.2152594307,0.5196822371) circle (0.03125cm);
\draw(1.7847405693,0.5196822371) circle (0.03125cm);
\draw(1.7847405693,-0.5196822371) circle (0.03125cm);
\draw(2.2152594307,-0.5196822371) circle (0.03125cm);
\draw(2.5196822371,0.2152594307) circle (0.03125cm);
\draw(2.5196822371,-0.2152594307) circle (0.03125cm);
\draw(1.480317763,0.2152594307) circle (0.03125cm);
\draw(1.480317763,-0.2152594307) circle (0.03125cm);
\draw(0,2.375) circle (0.03125cm);
\draw(0,-2.375) circle (0.03125cm);
\draw(0.1875,2) circle (0.03125cm);
\draw(0.1875,-2) circle (0.03125cm);
\draw(-0.1875,2) circle (0.03125cm);
\draw(-0.1875,-2) circle (0.03125cm);
\draw(0.5,1.5) circle (0.03125cm);
\draw(0.5,-1.5) circle (0.03125cm);
\draw(-0.5,1.5) circle (0.03125cm);
\draw(-0.5,-1.5) circle (0.03125cm);
\draw(0.2209708691,1.7209708691) circle (0.03125cm);
\draw(0.2209708691,1.2847405693) circle (0.03125cm);
\draw(-0.2209708691,1.7209708691) circle (0.03125cm);
\draw(-0.2209708691,1.2847405693) circle (0.03125cm);
\draw(0.2209708691,-1.7209708691) circle (0.03125cm);
\draw(-0.2209708691,-1.7209708691) circle (0.03125cm);
\draw(0.2209708691,-1.2847405693) circle (0.03125cm);
\draw(-0.2209708691,-1.2847405693) circle (0.03125cm);
\draw(1.1490485195,1.1490485195) circle (0.03125cm);
\draw(-1.1490485195,1.1490485195) circle (0.03125cm);
\draw(1.1490485195,-1.1490485195) circle (0.03125cm);
\draw(-1.1490485195,-1.1490485195) circle (0.03125cm);
\draw(1.0420843605,0.2072834672) circle (0.03125cm);
\draw(1.0420843605,-0.2072834672) circle (0.03125cm);
\draw(-1.0420843605,0.2072834672) circle (0.03125cm);
\draw(-1.0420843605,-0.2072834672) circle (0.03125cm);
\draw(0.2072834672,1.0420843605) circle (0.03125cm);
\draw(0.2072834672,-1.0420843605) circle (0.03125cm);
\draw(-0.2072834672,1.0420843605) circle (0.03125cm);
\draw(-0.2072834672,-1.0420843605) circle (0.03125cm);
\draw(0.5902933726,0.8834364631) circle (0.03125cm);
\draw(0.5902933726,-0.8834364631) circle (0.03125cm);
\draw(-0.5902933726,0.8834364631) circle (0.03125cm);
\draw(-0.5902933726,-0.8834364631) circle (0.03125cm);
\draw(0.8834364631,0.5902933726) circle (0.03125cm);
\draw(0.8834364631,-0.5902933726) circle (0.03125cm);
\draw(-0.8834364631,0.5902933726) circle (0.03125cm);
\draw(-0.8834364631,-0.5902933726) circle (0.03125cm);
\draw(0.4783542904,1.1548494157) circle (0.03125cm);
\draw(0.4783542904,-1.1548494157) circle (0.03125cm);
\draw(-0.4783542904,1.1548494157) circle (0.03125cm);
\draw(-0.4783542904,-1.1548494157) circle (0.03125cm);
\draw(1.1548494157,0.4783542904) circle (0.03125cm);
\draw(1.1548494157,-0.4783542904) circle (0.03125cm);
\draw(-1.1548494157,0.4783542904) circle (0.03125cm);
\draw(-1.1548494157,-0.4783542904) circle (0.03125cm);
\draw(1.016465998,0.751300955) circle (0.03125cm);
\draw(1.016465998,-0.751300955) circle (0.03125cm);
\draw(-1.016465998,0.751300955) circle (0.03125cm);
\draw(-1.016465998,-0.751300955) circle (0.03125cm);
\draw(0.751300955,1.016465998) circle (0.03125cm);
\draw(0.751300955,-1.016465998) circle (0.03125cm);
\draw(-0.751300955,1.016465998) circle (0.03125cm);
\draw(-0.751300955,-1.016465998) circle (0.03125cm);
\end{tikzpicture}
\caption{$\rho(X_6)$}
\end{subfigure}
\caption{Geometric approximations of $X_\xi$ from Example \ref{3}}
\label{circlepictures}

\end{figure}

\end{example}

We now turn to the factor groupoids $R_\xi$.

\begin{definition}
Let $R_\xi=\rho\times\rho(R_E)$ and endow it with the quotient topology induced by the \'etale topology on $R_E$. Denote $\pi:R_E\to R_\xi$ to be the restriction of $\rho\times\rho$ to $R_E$.
\end{definition}

Clearly $\pi$ is a continuous surjective groupoid homomorphism, and since tail-equivalent paths are never in the same fibre of $\rho$, $\pi$ is bijective on the fibres of the range map $r:R_E\to R_E^{(0)}$. We observe that the sets $R_n$ and $R_{n+1}-R_n$ are saturated with respect to $\pi$, for all $n\geq1$.


\begin{prop}
\begin{enumerate}[(i)]
    \item $R_\xi$ is a second-countable, locally compact, Hausdorff, \'etale groupoid.
    \item The map $\pi:R_E\to R_\xi$ satisfies the hypotheses in Proposition \ref{factor}, and is regular and measure regular.
\end{enumerate}
\end{prop}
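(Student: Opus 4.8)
The plan exploits two structures already available: the decomposition $R_E=\bigsqcup_{n\ge0}(R_{n+1}-R_n)$ of $R_E$ into $\pi$-saturated compact open pieces --- write $S_n=R_{n+1}-R_n$ --- and the toolbox of saturated open sets in $X_E$ from Lemmas \ref{sat} and \ref{comp}. First, from the explicit description of $\sim_\xi$ (and the fact that tail-equivalent paths are never $\sim_\xi$-equivalent) one checks that $R_E$ is a saturated subset of $X_E\times X_E$ for $\rho\times\rho$, so that $\pi=(\rho\times\rho)|_{R_E}$; consequently the quotient topology on $R_\xi$ refines the subspace topology it inherits from $X_\xi\times X_\xi$, and the latter is Hausdorff, so $R_\xi$ is Hausdorff. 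Next, since each $S_n$ is $\pi$-saturated and compact open, $R_\xi=\bigsqcup_n\pi(S_n)$ with each $\pi(S_n)$ clopen and compact, and $\pi|_{S_n}\colon S_n\to\pi(S_n)$ a continuous surjection from a compact metric space onto a Hausdorff space, whence $\pi(S_n)$ is compact metrizable. In particular $\pi$ is closed and proper (preimages of compacta meet only finitely many $S_n$), and $R_\xi$ is a locally compact, second-countable, metrizable Hausdorff space. Continuity, surjectivity and the groupoid-homomorphism property of $\pi$ are immediate, and bijectivity of $\pi$ on the fibres of the range map was already observed; so all the hypotheses of Proposition \ref{factor} will follow once $R_\xi$ is shown to be \'etale.

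For the \'etale property the strategy is to descend the compact open bisections of $R_E$. Call an open bisection $B\subseteq R_E$ \emph{admissible} if it is $\pi$-saturated and both $r(B)$ and $s(B)$ are $\rho$-saturated in $X_E$. For such $B$, the set $\pi(B)$ is open in $R_\xi$, the map $\pi|_B$ is a closed quotient map onto $\pi(B)$ (restriction of the closed map $\pi$ to a saturated set), and a diagram chase shows that $r$ restricts to a continuous closed bijection $\pi(B)\to\rho(r(B))$ onto the open subset $\rho(r(B))$ of $X_\xi$, hence a homeomorphism; likewise for $s$. The injectivity is where the rigidity of $\sim_\xi$ enters: if $(z,w),(z',w')\in B$ with $\rho(z)=\rho(z')$, then $z\sim_\xi z'$ forces $w$ to have a $\sim_\xi$-partner $w'$ with $(z',w')\in R_E$, and $\pi$-saturation of $B$ pins this down to $\rho(w)=\rho(w')$, i.e. $\pi(z,w)=\pi(z',w')$. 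It remains to produce a base of admissible bisections, which I would do exactly as $X_\xi$ was analyzed: when the last edges of $p,q$ lie outside $\xi^0(F)\cup\xi^1(F)$ one takes $B=\gamma(p,q)$ directly; when they lie inside an embedding one takes $\gamma(p,q)\cap(U_p\times U_q)$ (points of $U_p$ have no partners, so $\pi$ is injective there); and the two-point fibres are handled by bisections supported on the pairs of cylinders that appear in the definition of $V_{p,q}$. Granting such a base, inversion on $R_\xi$ is continuous (it commutes with $\pi$), and multiplication is continuous because $\pi(B_1)\cdot\pi(B_2)=\pi(B_1\cdot B_2)$ and products of open bisections in $R_E$ are open; with $r$ a local homeomorphism this makes $R_\xi$ a Hausdorff \'etale groupoid, and then Proposition \ref{factor} applies.

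Finally, regularity of $\pi$. Given $x'=\pi(a,b)\in H'$ --- so $a,b$ are both eventually inside some embedding and the fibre is $\{(a,b),(\tilde a,\tilde b)\}$ --- and $\varepsilon>0$, I would first intersect a prospective neighbourhood with the clopen piece $\pi(S_L)$ containing $x'$, which restricts attention to pairs agreeing past level $L+1$ and thereby links the ``settling levels'' of the two coordinates, and then shrink further by applying Lemma \ref{comp}(iv) in each coordinate to control the first $L$ edges. One then checks that for $y'$ in the resulting neighbourhood the fibre over $y'$ is a singleton, or has diameter $<\varepsilon$ (when a coordinate's partner has come close), or lies within Hausdorff distance $<\varepsilon$ of $\{(a,b),(\tilde a,\tilde b)\}$ (when the splitting levels still match those of $a,b$); measure regularity then follows from regularity as indicated in Section 2. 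I expect this last verification to be the main obstacle: one must rule out a bad configuration in which one coordinate's $\sim_\xi$-partner stays far away while the other's becomes close, and the point that makes it work is that convergence in $R_\xi$ forces the index of the piece $R_{n+1}-R_n$ to be eventually constant, which --- through tail-equivalence of the two coordinates --- ties the two partner-distances together and prevents the mismatch. A secondary burden is the bookkeeping needed to exhibit the base of admissible bisections, since the combinatorics of $\sim_\xi$ near the embeddings must be tracked with the same care as in Lemma \ref{sat}.
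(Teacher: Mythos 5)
Your overall architecture matches the paper's: Hausdorffness via the quotient topology refining the subspace topology from $X_\xi\times X_\xi$; local compactness, properness and second countability from the exhaustion of $R_E$ by the compact open saturated sets $R_n$ (the paper uses the $R_n$ rather than the differences $R_{n+1}-R_n$, but this is immaterial); and measure regularity deferred to regularity. Where you diverge is the \'etale property: the paper shows directly that the unit space is open and that $r$ is an open map, by checking that $r(U)$ is $\pi$-saturated whenever $U$ is open and saturated, and it gets continuity of multiplication from the fact that $\pi\times\pi$ restricted to the closed saturated set $R_E^{(2)}$ is a quotient map. Your route --- descending a base of ``admissible'' bisections --- is workable but heavier, and it contains one false claim: points of $U_p=C(p)-(D_0(p)\cup D_1(p))$ can very well have $\sim_\xi$-partners (the removal of $D_0(p)\cup D_1(p)$ only forces the splitting level past level $n$, cf.\ the proof of Lemma \ref{sat}(ii)), so $\pi$ is \emph{not} injective on $\gamma(p,q)\cap(U_p\times U_q)$. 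The construction survives because injectivity of $r$ on $\pi(B)$ follows from saturation of $B$ plus the bisection property of $B$ itself, as in your general ``diagram chase,'' but the parenthetical justification should be deleted.

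The more substantive issue is regularity, which you correctly identify as the crux but leave as a plan rather than a proof. The paper's verification takes, for $(x,y)$ and its partner $(x',y')$ with splitting levels $E_m$ for the first coordinates and $E_l$ for the second, the single saturated open set $U=(\gamma(p,q)\cup\gamma(p',q'))\cap(V_{p,p'}\times V_{q,q'})$ with $k\geq\max\{l,m,n+1\}$ and $2^{-k}<\ep$, and then splits into exactly two cases according to whether the common tail $x''_j=y''_j$ ($j\geq k+1$) of a point of $U$ stays inside $\xi^0(F)$ forever (then both splitting levels are pinned at $E_m$ and $E_l$ and the Hausdorff distance to the original fibre is at most $2^{-k}$) or exits at some level (then both splitting levels are pushed past $E_{k+1}$ and the fibre has diameter at most $2^{-k}$). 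Your observation that tail-equivalence at a controlled level ties the two coordinates' partner behaviour together is precisely the mechanism that makes this dichotomy exhaustive, so your plan is sound; but as written the case analysis --- the actual content of part (ii) --- is not carried out.
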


\begin{proof}
We begin by proving that $R_\xi$ is Hausdorff. Notice that $\pi$ is continuous with respect to the relative product topologies from $X_E\times X_E$ and $X_\xi\times X_\xi$. It follows that the quotient topology $R_\xi$ receives from $R_E$ is finer than the relative topology from $X_\xi\times X_\xi$. The space $X_\xi$ is Hausdorff by Proposition \ref{metric}, thus $R_\xi$ is too.

To see that $R_\xi$ is locally compact, notice that $R_E=\bigcup_{n=1}^\infty R_n$ and each $R_n$ is compact, open, and saturated. Therefore each element of $R_\xi$ is contained in the compact open set $\pi(R_n)$ for some $n$.

Let $\mu_E:R_E^{(2)}\to R_E$ and $\mu_\xi:R_\xi^{(2)}\to R_\xi$ be the multiplication maps. We will show that $\mu_\xi$ is continuous. It is readily verified that $\pi\circ\mu_E=\mu_\xi\circ(\pi\times\pi)$. If $U$ is open in $R_\xi$, then $\mu_E^{-1}(\pi^{-1}(U))=(\pi\times\pi)^{-1}(\mu_\xi^{-1}(U))$ is open in $R_E^{(2)}$ since $\mu_E$ and $\pi$ are continuous. Since $R_E$ and $R_\xi$ are locally compact Hausdorff and $R_E^{(2)}$ is a closed subset of $R_E\times R_E$ which is saturated (because $(\pi\times\pi)^{-1}(R_\xi^{(2)})=R_E^{(2)}$), the map $\pi\times\pi|_{R_E^{(2)}}$ is a quotient map, so $\mu_\xi^{-1}(U)=\pi\times\pi((\pi\times\pi)^{-1}(\mu_\xi^{-1}(U)))$ is open in $R_\xi^{(2)}$. Continuity of inversion may be proven similarly.

To see that $R_\xi$ is \'etale, first observe that, since $R_E^{(0)}=\pi^{-1}(R_\xi^{(0)})$, the groupoid is $r$-discrete (that is, $R_\xi^{(0)}$ is open), so it suffices to show that $r$ is an open map. Let $U'\subseteq R_\xi$ be open. Then $U=\pi^{-1}(U')$ is open by continuity of $\pi$, and $r(U)$ is open since $R_E$ is \'etale. Now $\pi(r(U))=r(\pi(U))=r(U')$, so we need only show that $r(U)$ is saturated with respect to $\pi$. If $(x,x)$ is in $r(U)$ and $(x',x')$ is in $R_E^{(0)}$ with $\rho(x)=\rho(x')$, pick $y$ in $X_E$ with $(x,y)$ in $U$. Then there is some $y'$ in $X_E$ with $\rho(y)=\rho(y')$. Then $(x',y')$ is in $R_E$ and $U$ is saturated, so $(x',y')$ is in $U$. Then $(x',x')=r(x',y')$ is in $r(U)$.

Of the standing hypotheses, the only nontrivial one to check is that $\pi$ is proper. Let $K\subseteq R_\xi$ be compact. As noted earlier, $R_E$ is the union of the compact open saturated sets $R_n$. It follows that $K\subseteq\pi(R_n)$ for some $n$, so $\pi^{-1}(K)\subseteq R_n$. Being a closed subset of a compact set, $\pi^{-1}(K)$ is compact.

At this point, we may conclude that $\pi$ is a closed map, and hence that $R_\xi$ is second-countable, using Lemma 7.5 of \cite{exc} and the proof of parts (i) and (ii) of Proposition \ref{metric}.

Next, we show that $\pi$ is regular. Let $(x,y)$ and $(x',y')$ be two distinct pairs in $R_E$ with $\pi(x,y)=\pi(x',y')$ and $\ep>0$. Let $n\geq0$ be the integer such that $(x,y)$ and $(x',y')$ are in $R_{n+1}-R_n$, let $E_m$ be the splitting level of $\{x,x'\}$, and let $E_l$ be the splitting level of $\{y,y'\}$. As before, we assume that the edges of $x$ and $y$ are eventually in $\xi^0(F)$ and the edges of $x'$ and $y'$ are eventually in $\xi^1(F)$. Choose $k\geq\max\{l,m,n+1\}$ so that $2^{-k}<\ep$. Let $p=(x_1,\ldots,x_k)$, $p'=(x_1',\ldots,x_k')$, $q=(y_1,\ldots,y_k)$, and $q'=(y_1',\ldots,y_k')$, and let
\[U=(\gamma(p,q)\cup\gamma(p',q'))\cap(V_{p,p'}\times V_{q,q'})\]
We must show that
\begin{enumerate}[(i)]
    \item $U$ is open,
    \item $U$ is saturated, and
    \item $\pi(U)$ satisfies the definition of regularity.
\end{enumerate}
For (i), since the \'etale topology on $R_E$ is finer than the relative product topology, $V_{p,p'}\times V_{q,q'}$ is open in $R_E$, and therefore so is $U$.

(ii) and (iii) can be proved simultaneously, in two cases. Let $(x'',y'')$ be in $U$ (suppose, without loss of generality, that it is in $\gamma(p,q)$) and $(x''',y''')$ be another pair with $\pi(x'',y'')=\pi(x''',y''')$. We must show that $(x''',y''')$ is also in $U$ and that either
\begin{equation}\label{hausdorff}
d_E^{(2)}(\{(x,y),(x',y')\},\{(x'',y''),(x''',y''')\})<\ep
\end{equation}
(in the Hausdorff metric) or
\begin{equation}\label{close}
d_E^{(2)}((x'',y''),(x''',y'''))<\ep
\end{equation}

The first case considers when $x_j''=y_j''$ is in $\xi^0(F)$ for all $j\geq k+1$. Here, the splitting level of $\{x'',x'''\}$ must be $E_m$ and the splitting level of $\{y'',y'''\}$ must be $E_l$. This forces $(x''',y''')$ into $\gamma(p',q')\cap(V_{p,p'}\times V_{q,q'})\subseteq U$. Also, we have
\begin{align}
\nonumber d_E^{(2)}(\{(x,y),(x',y')\},\{(x'',y''),(x''',y''')\})&=\max\{d_E^{(2)}((x,y),(x'',y'')),d_E^{(2)}((x',y'),(x''',y'''))\}\\
\nonumber&=\max\{d_E(x,x''),d_E(y,y''),d_E(x',x'''),d_E(y',y''')\}\\
\nonumber&\leq2^{-k}\\
\nonumber&<\ep
\end{align}
and hence (\ref{hausdorff}) holds.

The second case considers when $x_j''=y_j''$ is not in $\xi^0(F)$ for some $j\geq k+1$. This assumption, together with $(x'',y'')$ being in $V_{p,p'}\times V_{q,q'}$, forces the splitting levels of $\{x'',x'''\}$ and $\{y'',y'''\}$ to be level $E_{k+1}$ or further. In this case, $(x''',y''')$ is also in $\gamma(p,q)\cap(V_{p,p'}\times V_{q,q'})\subseteq U$.
Moreover,
\begin{align}
\nonumber d_E^{(2)}((x'',y''),(x''',y'''))&=\max\{d_E(x'',x'''),d_E(y'',y''')\}\\
\nonumber&\leq2^{-k}\\
\nonumber&<\ep
\end{align}
and hence (\ref{close}) holds. Finally, $\pi$ is measure regular by Proposition 7.16 of \cite{exc}.
\end{proof}

\begin{prop}
The dynamic asymptotic dimension of $R_\xi$ is $0$. It follows that $R_\xi$ is amenable, and that the nuclear dimension of $C_r^*(R_\xi)$ is at most $1$.
\end{prop}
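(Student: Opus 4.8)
The plan is to realise $R_\xi$ as an increasing union of compact open subgroupoids; once that is done, dynamic asymptotic dimension zero, amenability, and the stated nuclear dimension bound all follow from general facts.

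The key step, and the one that requires the most care, is to show that each $\pi(R_n)$ is a compact open \emph{subgroupoid} of $R_\xi$. First I would use the observation recorded just before the preceding proposition that $R_n$ is saturated for $\pi$, i.e.\ $R_n=\pi^{-1}(\pi(R_n))$; since $R_n$ is compact and open and $\pi$ is a continuous closed quotient map, this gives that $\pi(R_n)$ is compact, open and closed in $R_\xi$ (its complement is $\pi(R_E\setminus R_n)=\pi(\pi^{-1}(R_\xi\setminus\pi(R_n)))$, which is open and saturated). It then remains to check closure under the groupoid operations. Inversion is immediate. For multiplication, suppose $\alpha,\beta\in\pi(R_n)$ are composable in $R_\xi$; I would fix an arbitrary lift $g\in R_E$ of $\alpha$, which lies in $\pi^{-1}(\pi(R_n))=R_n$, and then --- using that $\pi(s(g))=s(\alpha)=r(\beta)$ and that $\pi|_{R_E^{s(g)}}\colon R_E^{s(g)}\to R_\xi^{\pi(s(g))}$ is a bijection by Proposition \ref{factor} --- lift $\beta$ to the unique $h\in R_E$ with $r(h)=s(g)$ and $\pi(h)=\beta$; again $h\in R_n$. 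Since $R_n$ is a genuine subgroupoid of $R_E$, the product $gh$ lies in $R_n$, and $\pi(gh)=\alpha\beta$, so $\alpha\beta\in\pi(R_n)$. This yields $R_\xi=\bigcup_{n\ge1}\pi(R_n)$ as an increasing union of compact open subgroupoids.

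The three conclusions are then formal. For dynamic asymptotic dimension zero: given an open, relatively compact $K\subseteq R_\xi$, one has $\overline K\subseteq\pi(R_n)$ for some $n$ (as in the preceding proposition, the $\pi(R_n)$ being compact, open and exhausting $R_\xi$), so the single open set $U_0=R_\xi^{(0)}$ works, since $\{g\in K:r(g),s(g)\in U_0\}=K$ is contained in the compact --- hence relatively compact --- subgroupoid $\pi(R_n)$. For amenability: each compact étale groupoid $\pi(R_n)$ is amenable, and topological amenability of étale groupoids is preserved under increasing unions of open subgroupoids. For the nuclear dimension: $R_\xi$ is principal and, by Proposition \ref{metric}, $X_\xi$ is compact metrizable of covering dimension $1$, so the nuclear dimension estimate of Guentner, Willett and Yu for principal étale groupoids gives $\dim_{\mathrm{nuc}}C_r^*(R_\xi)\le(\mathrm{dad}(R_\xi)+1)(\dim X_\xi+1)-1=1$.

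I expect the multiplicativity of $\pi(R_n)$ to be the only real obstacle: two arbitrary lifts of $\alpha$ and $\beta$ need not be composable in $R_E$, and this is resolved exactly by re-choosing the lift of one factor using the fibrewise bijectivity of $\pi$ and the saturatedness of $R_n$. (One could instead bypass the $R_n$: properness of $\pi$ makes $\pi^{-1}(K)$ relatively compact, dynamic asymptotic dimension zero of the AF-groupoid $R_E$ makes $\langle\pi^{-1}(K)\rangle$ relatively compact, and the same lifting trick gives $\langle K\rangle\subseteq\pi(\langle\pi^{-1}(K)\rangle)$, which is relatively compact --- but using the $R_n$ directly is cleaner, since they are already available.)
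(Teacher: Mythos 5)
Your proposal is correct and follows essentially the same route as the paper: writing $R_\xi=\bigcup_n\pi(R_n)$ as an increasing union of compact open subgroupoids and then invoking the general results of Guentner--Willett--Yu for dynamic asymptotic dimension zero, amenability, and the nuclear dimension bound via $\dim X_\xi=1$. The only difference is that you carefully verify closure of $\pi(R_n)$ under multiplication using saturation and fibrewise bijectivity of $\pi$, a point the paper leaves implicit; your verification is valid.
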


\begin{proof}
Since $R_\xi$ is equal to the union of the compact open subgroupoids $\pi(R_n)$, its dynamic asymptotic dimension is zero by Example 5.3 of \cite{dad}. The conclusion now follows from part (iv) of Proposition \ref{metric}, Theorem 8.6 of \cite{dad}, and Corollary 8.25 of \cite{dad}.
\end{proof}

All in all, the algebras $C_r^*(R_\xi)$ are unital, separable, have finite nuclear dimension, and satisfy the UCT (Theorem 10.1.7 of \cite{groupoid}). If, in addition, $R_E$ is minimal, then so is $R_\xi$ because $\pi$ is continuous and surjective, hence dense equivalence classes map to dense equivalence classes. In this case, $C_r^*(R_\xi)$ is simple, and is therefore classifiable.

\begin{corollary}\label{class}
If $R_E$ is minimal, then the C$^*$-algebra $C_r^*(R_\xi)$ is classified by its Elliott invariant.
\end{corollary}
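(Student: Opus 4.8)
The plan is to verify that $C_r^*(R_\xi)$ satisfies all the hypotheses of the Elliott classification theorem for unital simple nuclear C$^*$-algebras, nearly all of which have already been dealt with in this section, and then simply to invoke that theorem. So the first step is bookkeeping: from the proposition asserting that $R_\xi$ is a second-countable, locally compact, Hausdorff, \'etale groupoid, one gets that $C_r^*(R_\xi)$ is separable; since $R_\xi^{(0)}\cong X_\xi$ is compact by Proposition \ref{metric}, the characteristic function of the unit space is a unit, so $C_r^*(R_\xi)$ is unital. From the proposition on dynamic asymptotic dimension, $R_\xi$ is amenable and the nuclear dimension of $C_r^*(R_\xi)$ is at most $1$, hence finite; amenability together with Theorem 10.1.7 of \cite{groupoid} gives that $C_r^*(R_\xi)$ satisfies the UCT.

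The second step is simplicity, which is the only place the hypothesis that $R_E$ is minimal is used. Because $\pi:R_E\to R_\xi$ is a continuous surjective groupoid homomorphism, the image under $\rho$ of a dense $R_E$-equivalence class is a dense $R_\xi$-equivalence class, so $R_\xi$ is minimal. As an equivalence relation $R_\xi$ is principal, hence topologically principal, and a minimal topologically principal \'etale groupoid has simple reduced C$^*$-algebra. Thus $C_r^*(R_\xi)$ is unital, separable, simple, has finite nuclear dimension, and satisfies the UCT, which are exactly the hypotheses of the Elliott classification theorem (the culmination of work of many authors); that theorem then yields that $C_r^*(R_\xi)$ is determined up to isomorphism by its Elliott invariant.

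There is no genuine obstacle here: the corollary is essentially a restatement of the discussion immediately preceding it, recording that the properties established across the propositions of this section — the \'etale topology and Hausdorffness of $R_\xi$, finiteness of nuclear dimension via dynamic asymptotic dimension zero, amenability and the UCT, and minimality inherited from $R_E$ — are precisely those required for classifiability. The only substantive ingredient is the citation of the classification theorem itself, and the identification (carried out in Proposition \ref{factor} and the remaining parts of Theorem \ref{embedding}) of the invariant in the cases of interest.
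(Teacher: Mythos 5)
Your proposal is correct and follows essentially the same route as the paper: the paper's ``proof'' is precisely the paragraph preceding the corollary, which records that $C_r^*(R_\xi)$ is unital, separable, of finite nuclear dimension, and satisfies the UCT (via amenability and Theorem 10.1.7 of \cite{groupoid}), and that minimality of $R_E$ passes to $R_\xi$ through the continuous surjection $\pi$, giving simplicity and hence classifiability. Your added remark that principal implies topologically principal, so that minimality yields simplicity of the reduced C$^*$-algebra, just makes explicit a step the paper leaves implicit.
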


We now prove a noncommutative version of Proposition \ref{prod}.

\begin{prop}\label{bunce}
If $E=\xi^0(F)\cup\xi^1(F)$, then $C_r^*(R_E)$ is isomorphic to $C_r^*(R_F)\otimes M_{2^\infty}$, and $C_r^*(R_\xi)$ is isomorphic to $C_r^*(R_F)\otimes B$, where $B$ is the Bunce--Deddens algebra of type $2^\infty$.
\end{prop}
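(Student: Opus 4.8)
The plan is to deduce both statements from Proposition~\ref{prod} by promoting its homeomorphisms to isomorphisms of (\'etale, resp.\ topological) groupoids, and then to handle the single remaining case in which $(W,F)$ is the trivial diagram --- where the relevant C$^*$-algebras are the $2^\infty$ UHF algebra and the Bunce--Deddens algebra $B$ (this is Example~\ref{simp}).

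\emph{First statement.} With $x_n=\xi^{j_n}((z_x)_n)$ as in the proof of Proposition~\ref{prod}, properties (iv) and (vi) of the embeddings give $x_k=y_k$ if and only if $(z_x)_k=(z_y)_k$ and $(j_x)_k=(j_y)_k$; hence the homeomorphism $X_E\to X_F\times\{0,1\}^\bN$ carries $R_E$ bijectively onto the product equivalence relation $R_F\times R_{2^\infty}$, where $R_{2^\infty}$ is tail equivalence on the path space $\{0,1\}^\bN$ of the $2^\infty$ diagram. Splitting each finite path of $(V,E)$ into its $(W,F)$-part and its $\{0,1\}$-part identifies the basic bisections $\gamma(p,q)$ of $R_E$ with products of basic bisections of $R_F$ and $R_{2^\infty}$, so the \'etale topologies match and $R_E\cong R_F\times R_{2^\infty}$ as \'etale groupoids. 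Since $C_r^*$ turns a product of \'etale groupoids into the minimal (hence, by nuclearity of AF-algebras, unambiguous) tensor product, and $C_r^*(R_{2^\infty})=\varinjlim_n M_{2^n}\cong M_{2^\infty}$, this gives $C_r^*(R_E)\cong C_r^*(R_F)\otimes M_{2^\infty}$.

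\emph{Second statement.} Under the identifications of Proposition~\ref{prod} the quotient map $\rho$ becomes $\id_{X_F}\times\phi$, with $\phi:\{0,1\}^\bN\to\bT$ the Cantor map of that proof; hence $\pi=\rho\times\rho|_{R_E}$ becomes $\id_{R_F}\times\pi_{2^\infty}$, where $\pi_{2^\infty}:R_{2^\infty}\to S_{2^\infty}$ is the factor groupoid of $R_{2^\infty}$ (this $S_{2^\infty}$ is the groupoid $R_\xi$ of Example~\ref{simp}). As $\phi$ and $\pi_{2^\infty}$ are proper surjections onto locally compact Hausdorff spaces and $\id$ is a homeomorphism, $\id\times\phi$ and $\id\times\pi_{2^\infty}$ are proper, hence closed quotient maps, so $X_\xi\cong X_F\times\bT$ and $R_\xi\cong R_F\times S_{2^\infty}$ as topological groupoids and $C_r^*(R_\xi)\cong C_r^*(R_F)\otimes C_r^*(S_{2^\infty})$. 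It remains to identify $C_r^*(S_{2^\infty})$ with $B$. Just as for $\pi$, the sets $R_n$ are compact, open, and saturated for $\pi_{2^\infty}$, so the $\pi_{2^\infty}(R_n)$ form an increasing sequence of compact open subgroupoids with union $S_{2^\infty}$, whence $C_r^*(S_{2^\infty})=\varinjlim_n C_r^*(\pi_{2^\infty}(R_n))$. A direct computation with $\phi$ shows that the $\pi_{2^\infty}$-image of the $R_n$-class of $j$ is the coset $\phi(j)\cdot\mu_{2^{n-1}}$, where $\mu_{2^{n-1}}\subseteq\bT$ is the group of $2^{n-1}$-th roots of unity; thus $\pi_{2^\infty}(R_n)$ is isomorphic, as a topological groupoid, to the transformation groupoid $\bT\rtimes\mu_{2^{n-1}}$ for the free rotation action, and $C_r^*(\pi_{2^\infty}(R_n))\cong C(\bT)\rtimes\bZ/2^{n-1}\cong M_{2^{n-1}}(C(\bT))$ (a bundle of matrix algebras over $\bT$ being trivial). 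Under these identifications the inclusion $\pi_{2^\infty}(R_n)\hookrightarrow\pi_{2^\infty}(R_{n+1})$ is the one induced by the index-two inclusion $\mu_{2^{n-1}}\hookrightarrow\mu_{2^n}$, i.e.\ $C(\bT)\rtimes\bZ/2^{n-1}\hookrightarrow C(\bT)\rtimes\bZ/2^n$ with compatible rotation actions; this is exactly the defining inductive system for the Bunce--Deddens algebra of type $2^\infty$, so $C_r^*(S_{2^\infty})\cong B$. Equivalently, $S_{2^\infty}$ is the transformation groupoid $\bT\rtimes(\bZ[1/2]/\bZ)$ for the rotation action of the Pr\"ufer group, whose reduced C$^*$-algebra is $B$.

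The main obstacle is the last identification. Everything else --- Proposition~\ref{prod}, the two product decompositions, and the behaviour of $C_r^*$ on products --- is routine; the delicate point is recognizing the tower $(\pi_{2^\infty}(R_n))_n$, equivalently $(C(\bT)\rtimes\bZ/2^n)_n$ with the index-two connecting maps, as producing $B$. If one prefers to bypass this bookkeeping, one may instead invoke Corollary~\ref{class}: $C_r^*(S_{2^\infty})$ is a unital classifiable C$^*$-algebra, and its Elliott invariant ($K_0\cong\bZ[1/2]$ with order unit $1$, $K_1\cong\bZ$, a unique trace, standard pairing) coincides with that of $B$, so $C_r^*(S_{2^\infty})\cong B$ --- at the cost of citing the $K$-theoretic computations of Section~5.
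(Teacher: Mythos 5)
Your proposal is correct and follows essentially the same route as the paper: both prove the first statement by upgrading the homeomorphism of Proposition \ref{prod} to a groupoid isomorphism $R_E\cong R_F\times S$ (with $S$ tail equivalence on $\{0,1\}^\bN$), and both obtain the second by passing this product decomposition through the quotient map, reducing to the identification of the quotient of $S$ with the transformation groupoid of the $\bZ[\tfrac12]/\bZ$ rotation action on $\bT$, whose reduced C$^*$-algebra is $B$. The only difference is cosmetic: where the paper cites Blackadar 10.11.4 for $C_r^*(T)\cong B$, you additionally spell out the inductive-limit tower $C(\bT)\rtimes\bZ/2^n\cong M_{2^n}(C(\bT))$ realizing $B$.
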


\begin{proof}
Let $S$ denote the tail-equivalence relation on $\{0,1\}^\bN$ with its usual \'etale topology, so that $C_r^*(S)\cong M_{2^\infty}$ (Example 9.2.6 of \cite{groupoid}). Let
\[\textstyle T=\{(w,z)\in \bT\times \bT\mid w=e^{2\pi i\theta}z\text{ for some }\theta\in\bZ[\frac12]\}\]
Give $T$ the topology with basic open sets $U_{W,\theta}=\{(z,e^{2\pi i\theta}z)\mid z\in W\}$, where $W\subseteq \bT$ is open and $\theta\in\bZ[\frac12]$. This makes $T$ a second-countable, locally compact, Hausdorff, \'etale groupoid, and $C_r^*(T)\cong B$, see 10.11.4 of \cite{blackadar}. It is not difficult to see that this topology is the same as the quotient topology received from $\tau:S\to T$, where $\tau=(\phi\times\phi)|_S$ and $\phi$ is as in Proposition \ref{prod}. 



Define the map $\Psi:R_E\to R_F\times S$ by $\Psi(x,y)=((z_x,z_y),(j_x,j_y))$, using the notation in Proposition \ref{prod}. It is straightforward to check that this is an isomorphism of groupoids (which proves the first statement of the proposition). The composition of this isomorphism with $\id_{R_F}\times\tau$ is a quotient map whose fibres are identical to the fibres of $\pi$, which therefore factors to an isomorphism from $R_\xi$ to $R_F\times T$.
\end{proof}


\section{Extensions of Cantor minimal systems}

We now turn to the second construction. We begin with the definition of an iterated function system.

\begin{definition}
An \emph{iterated function system} $(X,d_X,\F)$ is a complete metric space $(X,d_X)$ together with a finite set $\F$ of functions $f:X\to X$ such that there exists a real number $\lambda$ with $0<\lambda<1$ and $d_X(f(x),f(y))\leq\lambda d_X(x,y)$ for all points $x$ and $y$ in $X$ and all $f$ in $\F$.
\end{definition}

It is a standard fact that every iterated function system $(X,d_X,\F)$ possesses a unique non-empty compact subset $C$ of $X$ such that
\[\bigcup_{f\in\F}f(C)=C.\]
The set $C$ is called the \emph{attractor} of the iterated function system.

We now outline the construction of the extension in \cite{dps}. Let $(X,\phi)$ be a minimal dynamical system on the Cantor set, and let $(C,d_C,\F)$ be an iterated function system with attractor $C$, and assume that each $f$ in $\F$ is injective. We may assume that the Cantor minimal system is given by a Bratteli-Vershik system $(X_E,\phi_E)$ on a properly ordered Bratteli diagram, see Theorem 5.1 of \cite{cms}. By telescoping the diagram, we may assume that there are at least $\#\F+1$ edges between any two vertices in consecutive levels. To each edge $e$ in the diagram we assign a function $f_e$ in $\F\cup\{\id_C\}$ such that the following properties hold.
\begin{enumerate}[(i)]
    \item If $e$ is either maximal or minimal, then $f_e\neq\id_C$.
    \item For every $v$ in $V$, we have $\bigcup_{t(e)=v,f_e\neq\id_C}f_e(C)=C$.
    \item The set $\{e\mid f_e=\id_C\}$ contains an infinite path.
\end{enumerate}
We then define
\[X_n=\{(x,c)\in X_E\times C\mid c\in f_{x_1}\circ\cdots\circ f_{x_n}(C)\}\]
and $\tilde X=\bigcap_{n=1}^\infty X_n$. The factor map $\tilde\pi:\tilde X\to X_E$ (we reserve the letter $\pi$ for our groupoid homomorphism) is defined by $\tilde\pi(x,c)=x$. Notice that the space $\tilde X$ consists of two types of points, as follows.
\begin{enumerate}[(i)]
    \item If $(x,c)$ is in $\tilde X$ and $f_{x_n}\neq\id_C$ for infinitely many $n$, then the fibre $\tilde\pi^{-1}(\tilde\pi(x,c))$ consists of a single point $(x,c_x)$ due to the contractive factor $\lambda$ causing the diameters of the sets $f_{x_1}\circ\cdots\circ f_{x_n}(C)$ to shrink.
    \item If there is an $m$ with $f_{x_n}=\id_C$ for all $n\geq m$, then the fibre $\tilde\pi^{-1}(\tilde\pi(x,c))$ is equal to
    \[\{x\}\times f_{x_1}\circ\cdots\circ f_{x_m}(C)\]
    and is thus homeomorphic to $C$ since each function $f$ in $\F$ is injective.
\end{enumerate}
The self map $\tilde\phi$ of $\tilde X$ is then defined as follows, based on these two types of points.
\begin{enumerate}[(i)]
    \item If $x$ has infinitely many $n$ with $f_{x_n}\neq\id_C$, there is a unique point $c_x$ in $C$ with $\tilde\pi(x,c_x)=x$. In this case, $\phi_E(x)$ also has infinitely many such $n$, and we set $\tilde\phi(x,c)=(\phi_E(x),c_{\phi_E(x)})$.
    \item If there is an $m$ with $f_{x_n}=\id_C$ for all $n\geq m$, then
    \[\phi_E(x)=(y_1,y_2,\cdots,y_m,x_{m+1},x_{m+2},\ldots)\]
    for some edges $y_1,\ldots,y_m$ with $t(x_m)=t(y_m)$. In this case, we set
    \[\tilde\phi(x,c)=(\phi_E(x),f_{y_1}\circ\cdots\circ f_{y_m}\circ f_{x_m}^{-1}\circ\cdots\circ f_{x_1}^{-1}(c)).\]
\end{enumerate}

Before we define the factor map, we need to make a slight modification to the system $(\tilde X,\tilde\phi)$ so that Definition \ref{reg} will be satisfied. For $n\geq1$ we set
\[\F^{(n)}=\{f_1\circ f_2\circ\cdots\circ f_n\mid f_j\in\F\FOR1\leq j\leq n\}\]
adjust the edge assignment so that an edge $e$ in $E_n$ is assigned to a function $f_e$ in $\F^{(n)}\cup\{\id_C\}$ (note that this may require further telescoping of the diagram). The map $\tilde\phi:\tilde X\to\tilde X$ is then constructed in the same way.

\begin{definition}
    Define the map $\pi:R_{\tilde\phi}\to R_{\phi_E}$ by $\pi=\tilde\pi\times\tilde\pi|_{R_{\tilde\phi}}$.
\end{definition}

\begin{prop}
\begin{enumerate}[(i)]
    \item $R_{\tilde\phi}$ and $R_{\phi_E}$ are second-countable, locally compact, Hausdorff, \'etale groupoids.
    \item The map $\pi:R_{\tilde\phi}\to R_{\phi_E}$ is regular and satisfies the hypotheses of Proposition \ref{factor}.
\end{enumerate}
\end{prop}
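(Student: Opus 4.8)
The plan is to verify each of the assertions more or less directly from the constructions of $R_{\tilde\phi}$ and $R_{\phi_E}$, reducing the topological statements to the structure of the Bratteli-Vershik system and the contraction estimates in the iterated function system. First I would dispose of part (i). Since $(X,\phi)$ is minimal on an infinite Cantor set, $\phi_E$ acts freely, so $R_{\phi_E}\cong X_E\times\bZ$ with the product topology, which is visibly second-countable, locally compact, Hausdorff, and \'etale. For $R_{\tilde\phi}$, I would first check that $\tilde X$ is compact and metrizable: it is the intersection of the closed sets $X_n\subseteq X_E\times C$, hence a closed subset of the compact metrizable space $X_E\times C$. Next, $\tilde\phi$ is a homeomorphism of $\tilde X$ (continuity in both directions follows from the fact that it is built, piecewise on the clopen pieces determined by the first non-maximal coordinate, from $\phi_E$ and from compositions of the continuous maps $f_e$ and their inverses, which are continuous on the relevant compact fibre images since each $f\in\F$ is injective). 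Minimality of $(\tilde X,\tilde\phi)$ is inherited because $\tilde\pi$ is a factor map onto the minimal system $(X_E,\phi_E)$ and the construction was arranged so that the ``expanded'' orbits are still dense — more precisely, one argues that any nonempty open invariant subset of $\tilde X$ maps onto a nonempty open invariant subset of $X_E$, hence all of $X_E$, and then fills up the fibres using that property (ii) of the edge assignment forces $\bigcup_{t(e)=v} f_e(C)=C$. Once $(\tilde X,\tilde\phi)$ is a minimal, hence free, homeomorphism of a compact metrizable space, the standard identification $R_{\tilde\phi}\cong\tilde X\times\bZ$ with the product topology gives the remaining properties in (i).

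For part (ii), I would first check the hypotheses of Proposition \ref{factor}. The map $\pi=\tilde\pi\times\tilde\pi|_{R_{\tilde\phi}}$ is a continuous surjective groupoid homomorphism since $\tilde\pi$ is a continuous surjective factor map intertwining $\tilde\phi$ and $\phi_E$. Properness: under the identifications $R_{\tilde\phi}\cong\tilde X\times\bZ$ and $R_{\phi_E}\cong X_E\times\bZ$, the map $\pi$ becomes $\tilde\pi\times\id_\bZ$, and since $\tilde\pi$ is a continuous map between compact Hausdorff spaces it is proper, so $\tilde\pi\times\id_\bZ$ is proper. Surjectivity is clear. The fibrewise bijectivity condition $\pi|_{(R_{\tilde\phi})^u}\colon (R_{\tilde\phi})^u\to (R_{\phi_E})^{\pi(u)}$ is immediate from the description $(R_{\tilde\phi})^{(x,c)}=\{((x,c),\tilde\phi^n(x,c))\mid n\in\bZ\}$ and the fact that $\tilde\pi\circ\tilde\phi^n=\phi_E^n\circ\tilde\pi$, so both sides are copies of $\bZ$ indexed the same way; here one uses that $\phi_E$ is free so $n\mapsto\phi_E^n(x)$ is injective.

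The remaining point, and the one I expect to be the main obstacle, is regularity of $\pi$ in the sense of Definition \ref{reg}. Here is where the modification of the edge assignment (to values in $\F^{(n)}\cup\{\id_C\}$ on $E_n$) does its work. An element $(x,c)\in R_{\tilde\phi}$, written as $((x,c),(y,c'))$ with $(y,c')=\tilde\phi^k(x,c)$, lies in $H'$ exactly when the fibre over its image has more than one point, which by the dichotomy in the construction happens exactly when $x$ (equivalently $y$) is eventually-identity, i.e.\ $f_{x_n}=\id_C$ for all large $n$; the fibre is then $\{x\}\times f_{x_1}\circ\cdots\circ f_{x_m}(C)$ crossed with the corresponding set over $y$, and its diameter is controlled by $\lambda^{\text{(number of non-identity edges among }x_1,\dots,x_m)}\,\diam C$. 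Given a target pair and $\ep>0$, I would take a cylinder-set neighbourhood $U'$ in $R_{\phi_E}$ determined by specifying a long initial segment $x_1,\dots,x_N$ of the base path: for $y'$ in $U'$ either $y'$ is again eventually-identity with the same first $N$ edges, in which case the fibre $\tilde\pi^{-1}$ agrees with that of $x'$ up to level $N$ and the Hausdorff distance $d_G(\pi^{-1}(x'),\pi^{-1}(y'))$ is small (both fibres sit inside the same small set $f_{x_1}\circ\cdots\circ f_{x_N}(C)$-product once $N$ is large — this is exactly where one needs that an $E_N$-edge is assigned a map in $\F^{(N)}$, so that a single ``level-$N$'' step already contracts by $\lambda^N$), or some $x_n$ with $n\le N$ is non-identity forcing $f_{y_n}\neq\id_C$ and hence $\operatorname{diam}\tilde\pi^{-1}(y')\le\lambda^{\,c(N)}\diam C$ which is $<\ep$ for $N$ large. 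In either case one of the two alternatives in Definition \ref{reg} holds, so $\pi$ is regular. I would then note that measure regularity need not be addressed here, as the paper has declared it will follow from regularity in these cases. The genuinely delicate bookkeeping is the uniform contraction estimate in the first alternative, making sure the Hausdorff distance between the two fibres (not just their diameters) goes to zero along the neighbourhood basis; this is where the telescoping and the $\F^{(n)}$-valued assignment are essential and is the step I would write out most carefully.
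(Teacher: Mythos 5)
Your part (i) and your verification of the hypotheses of Proposition \ref{factor} in part (ii) agree with the paper, which disposes of these by noting that both groupoids come from free $\bZ$-actions (freeness of $\tilde\phi$ already follows from freeness of $\phi_E$ via $\tilde\pi\circ\tilde\phi=\phi_E\circ\tilde\pi$, so your detour through minimality of $(\tilde X,\tilde\phi)$ is not needed). The problem is in the regularity argument, which is the heart of the proposition, and the key estimate as you sketch it would not survive being written out. Regularity only has to be checked at $x'\in H'$, i.e.\ at paths $x$ with $f_{x_i}=\id_C$ for all $i\geq m$. For such an $x$ the set $f_{x_1}\circ\cdots\circ f_{x_N}(C)$ \emph{stabilizes} at $f_{x_1}\circ\cdots\circ f_{x_m}(C)$ once $N\geq m$ and does not shrink as $N\to\infty$; in particular it need not be small, and your count $c(N)$ of non-identity edges among $x_1,\ldots,x_N$ does not tend to infinity. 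So neither your justification of the first alternative (``both fibres sit inside the same small set $f_{x_1}\circ\cdots\circ f_{x_N}(C)$'') nor your diameter bound $\lambda^{c(N)}\diam C$ in the second alternative is correct in precisely the case that matters.

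The correct dichotomy, which is what the paper uses, is on the edges of $y$ \emph{past} level $N$, not on the edges of $x$ up to level $N$: take $U=C(x_1,\ldots,x_N)$ with $N\geq m$, $\lambda^N<\ep$ and $2^{-N}<\ep$. If $f_{y_i}\neq\id_C$ for some $i\geq N$, then because the modified assignment places level-$i$ edges in $\F^{(i)}$, that single edge already contracts by $\lambda^i\leq\lambda^N$, so $\diam\tilde\pi^{-1}(y)<\ep$ and the second alternative of Definition \ref{reg} holds. If instead $f_{y_i}=\id_C$ for all $i\geq N$, then since $y_i=x_i$ for $i\leq N$, the $C$-parts of the two fibres are \emph{identical} sets, $f_{y_1}\circ\cdots\circ f_{y_n}(C)=f_{x_1}\circ\cdots\circ f_{x_n}(C)$ for all $n$, and the Hausdorff distance between the fibres is controlled purely by $d_E(x,y)\leq2^{-N}<\ep$ in the $X_E$-coordinate, with no contraction in $C$ involved at all. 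That exact coincidence of the fibres' $C$-parts is the missing idea; without it the first alternative cannot be verified, since the common set $f_{x_1}\circ\cdots\circ f_{x_m}(C)$ may have diameter comparable to $\diam C$.
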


\begin{proof}
Both groupoids arise from free actions of the integers, so (i) is immediate. The only nontrivial thing to prove for (ii) is that $\pi$ is regular. It suffices restrict our attention to $\tilde\pi$. By scaling the metric $d_C$, we may assume that $\diam C=1$. Fix $\ep>0$ and $x$ in $X_E$; we seek an open set $U$ of $X_E$ such that for all $y$ in $U$, either $d(\tilde\pi^{-1}(x),\tilde\pi^{-1}(y))<\ep$ or $\diam\tilde\pi^{-1}(y)<\ep$.

First suppose there are infinitely many $i$ with $f_{x_i}\neq\id_C$. Choose $k\geq1$ such that $\lambda^k<\ep$ and $f_{x_k}\neq\id_C$, and set $U=C(x_1,x_2,\ldots,x_k)$. If $y$ is in $U$ and $(y,c)$ and $(y,d)$ are in $\tilde\pi^{-1}(y)$, then $c$ and $d$ are both in $f_{y_1}\circ\cdots\circ f_{y_k}(C)=f_{x_1}\circ\cdots\circ f_{x_k}(C)$, and since $f_{x_k}$ has a contractive factor of at most $\lambda^k$ (this is where the modification is necessary), we have $d_C(c,d)\leq\lambda^k<\ep$ and hence $\diam\tilde\pi^{-1}(y)<\ep$.

Second, suppose that $f_{x_i}=\id_C$ eventually. Choose $k\geq1$ such that $2^{-k}<\ep$, $\lambda^k<\ep$, and $f_{x_i}=\id_C$ for all $i\geq k$, and set $U=C(x_1,x_2,\ldots,x_k)$. Let $y$ be in $U$; if $f_{y_i}\neq\id_C$ for some $i\geq k$, then $\diam f_{y_1}\circ\cdots\circ f_{y_i}(C)\leq\lambda^i\leq\lambda^k<\ep$, so $\diam\tilde\pi^{-1}(y)<\ep$ as in the first case. Otherwise, $f_{x_n}(C)=f_{y_n}(C)$ for all $n\geq1$, and $d_E(x,y)\leq2^{-N}<\ep$, which implies that $d(\tilde\pi^{-1}(x),\tilde\pi^{-1}(y))<\ep$.
\end{proof}

The factor map is also measure regular in this construction, but it will be easier to prove it later (Proposition \ref{meas}).

\begin{prop}
The dynamic asymptotic dimension of $R_{\tilde\phi}$ is $1$, and if the covering dimension of $C$ is finite, then $C_r^*(R_{\tilde\phi})$ has finite nuclear dimension.
\end{prop}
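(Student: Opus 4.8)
The plan is to recognize $R_{\tilde\phi}$ as the transformation groupoid of a free $\bZ$-action and then invoke the dynamic asymptotic dimension and nuclear dimension machinery of Guentner--Willett--Yu \cite{dad}, exactly as in the proof of the previous proposition. First observe that $\tilde\phi$ acts freely on $\tilde X$: the factor map $\tilde\pi$ intertwines $\tilde\phi$ with $\phi_E$, and $(X_E,\phi_E)$, being minimal on an infinite space, is free, so $\tilde\phi^n(x,c)=(x,c)$ forces $\phi_E^n(x)=x$ and hence $n=0$. The space $\tilde X$ is a closed subspace of the compact metrizable space $X_E\times C$, so it is itself compact and metrizable, and it is infinite since $\tilde\pi$ maps it onto the Cantor set $X_E$. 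Consequently $R_{\tilde\phi}$ is (topologically isomorphic to) $\tilde X\rtimes\bZ$ for a free action of $\bZ$ on an infinite compact metrizable space, and in particular it is principal and amenable.

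To compute the dynamic asymptotic dimension I would combine two bounds. The upper bound $\dim_{\mathrm{DAD}}(R_{\tilde\phi})\le 1$ is the instance for $\bZ$ (which has asymptotic dimension one) of the Guentner--Willett--Yu estimate on the dynamic asymptotic dimension of transformation groupoids of free actions \cite{dad}. For the lower bound, consider the open, relatively compact bisection $\tilde X\times\{1\}$ of $\tilde X\rtimes\bZ$; the subgroupoid it generates contains $\tilde X\times\{n\}$ for every $n\in\bZ$, hence is all of $R_{\tilde\phi}$, which is not relatively compact. By the definition of dynamic asymptotic dimension this rules out the value $0$, so $\dim_{\mathrm{DAD}}(R_{\tilde\phi})=1$.

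For the nuclear dimension claim I would first bound the covering dimension of the unit space $\tilde X$. Since $X_E$ is totally disconnected, $\dim X_E=0$, so the product inequality for covering dimension of (separable) metrizable spaces gives $\dim(X_E\times C)\le\dim X_E+\dim C=\dim C$, and monotonicity of covering dimension on subspaces of metrizable spaces then gives $\dim\tilde X\le\dim C<\infty$. Now $R_{\tilde\phi}$ is a second-countable, locally compact, Hausdorff, \'etale, principal groupoid with finite dynamic asymptotic dimension and finite-dimensional unit space, so Theorem 8.6 and Corollary 8.25 of \cite{dad} apply just as in the previous proposition, now with the values $1$ and $\dim\tilde X$ in place of $0$ and $1$, and yield a finite upper bound for the nuclear dimension of $C_r^*(R_{\tilde\phi})$.

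I do not expect a genuine obstacle: the freeness of $\tilde\phi$ is inherited from $\phi_E$, the remaining topological regularity of $R_{\tilde\phi}$ was established in the preceding proposition, and the only care needed is routine dimension-theoretic bookkeeping (the product and subspace inequalities for covering dimension, both standard for separable metrizable spaces) together with confirming that the cited results of \cite{dad} apply in this slightly less degenerate setting than the $\dim_{\mathrm{DAD}}=0$ case used for $R_\xi$.
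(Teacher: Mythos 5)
Your argument is correct and follows the same route as the paper, which simply cites Theorem 1.3(i) and Theorem 8.6 of \cite{dad}; you have merely filled in the routine verifications (freeness of $\tilde\phi$ inherited from $\phi_E$, the lower bound ruling out dynamic asymptotic dimension zero, and the bound $\dim\tilde X\le\dim C$ via the product and subspace inequalities) that the paper leaves implicit. Nothing further is needed.
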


\begin{proof}
This is immediate from Theorem 1.3(i) of \cite{dad} and Theorem 8.6 of \cite{dad}.
\end{proof}

\begin{corollary}
If the covering dimension of $C$ is finite, then the C$^*$-algebra $C_r^*(R_{\tilde\phi})$ is classified by its Elliott invariant.
\end{corollary}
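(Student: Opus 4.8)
The plan is to check that $C_r^*(R_{\tilde\phi})$ meets every hypothesis of the Elliott classification theorem, namely that it is unital, separable, simple, of finite nuclear dimension, and satisfies the UCT. Three of these are immediate from what has already been assembled: since $\tilde X$ is a closed subset of the compact space $X_E\times C$ it is compact, so $C_r^*(R_{\tilde\phi})\cong C(\tilde X)\times_{\tilde\phi}\bZ$ is unital; it is separable because $R_{\tilde\phi}$ is second-countable; and finite nuclear dimension under the standing assumption $\dim C<\infty$ is precisely the content of the preceding proposition. For the UCT one observes that $R_{\tilde\phi}$ is the transformation groupoid of an action of the amenable group $\bZ$, hence amenable, so Theorem 10.1.7 of \cite{groupoid} applies; alternatively the class of separable nuclear C$^*$-algebras satisfying the UCT is closed under crossed products by $\bZ$ and contains $C(\tilde X)$.

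The one point requiring genuine argument is simplicity, which reduces to showing that $(\tilde X,\tilde\phi)$ is minimal and that $\tilde\phi$ acts freely. Freeness is automatic: a minimal homeomorphism of an infinite compact Hausdorff space can have no periodic point, since a periodic point would have finite orbit and minimality would then force the whole space to be that finite orbit; here $\tilde X$ is infinite, as it maps onto the Cantor set $X_E$. Minimality of the DPS extension is established in \cite{dps}, the essential input being condition (ii) on the edge assignment. What one should verify is that the adjustments made above to secure regularity of $\pi$ — the telescoping and the replacement of $\F$ by $\F^{(n)}$ on the edges of level $E_n$ — do not disturb conditions (i)--(iii): the covering identity $\bigcup_{t(e)=v,\,f_e\neq\id_C}f_e(C)=C$ still holds after the replacement (each $f\in\F^{(n)}$ has $f(C)\subseteq C$ and the union over level-$n$ edges into $v$ still exhausts $C$), and conditions (i) and (iii) are clearly unaffected. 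Thus $(\tilde X,\tilde\phi)$ is a minimal free system, so $R_{\tilde\phi}$ is a minimal, principal, amenable, second-countable \'etale groupoid and $C_r^*(R_{\tilde\phi})$ is simple.

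Putting these together, $C_r^*(R_{\tilde\phi})$ is a unital, separable, simple C$^*$-algebra with finite nuclear dimension satisfying the UCT, hence is classified by its Elliott invariant, exactly as in Corollary \ref{class}. I do not expect any single deep step here; the main obstacle is the bookkeeping around minimality, i.e.\ confirming that the modifications used to make $\pi$ regular preserve the combinatorial conditions (i)--(iii) that make the DPS construction minimal in the first place, after which the corollary follows at once.
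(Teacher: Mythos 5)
Your proposal is correct and follows the same route the paper intends: the corollary is stated without proof precisely because it amounts to checking the Elliott hypotheses — unitality from compactness of $\tilde X$, separability from second countability, finite nuclear dimension from the preceding proposition, the UCT from amenability of the transformation groupoid, and simplicity from minimality and freeness of $(\tilde X,\tilde\phi)$ as in \cite{dps}. Your extra care in verifying that the passage from $\F$ to $\F^{(n)}$ preserves the covering condition (ii) is a worthwhile detail the paper leaves implicit, but it does not change the argument.
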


\section{$K$-theory}

Up until now we have proven parts (i) and (ii) of the second list in Theorem \ref{embedding}. In this section we turn to the remaining parts, as well as Theorem \ref{ifs}.

\subsection{Quotients of path spaces}

We begin by analyzing the subgroupoids $H\subseteq R_E$ and $H'\subseteq R_\xi$ introduced in Definition \ref{Hs}. Since the fibres of the map $\pi:R_E\to R_\xi$ consist of either one or two points, we have
\[H'=\{(x',y')\in R_\xi\mid\#\pi^{-1}(x',y')=2\}\]
and $H=\pi^{-1}(H')$. Clearly, $(x,y)$ is in $H$ if and only if the edges of $x$ and $y$ are eventually in $\xi^0(F)$ or eventually in $\xi^1(F)$. From part (ii) of Definition \ref{Hs}, we immediately obtain the following.

\begin{prop}\label{seq}
A sequence $(x^{(n)},y^{(n)})\to(x,y)$ in $H$ if and only if $x^{(n)}\to x$ and $y^{(n)}\to y$ in $X_E$ and, if $k\geq1$ is the least integer such that $x_i$ and $y_i$ are in $\xi^j(F)$ for $i\geq k$, then there is an integer $m\geq1$ such that $k$ is the least integer such that $x_i^{(n)}$ and $y_i^{(n)}$ are in $\xi^j(F)$ for all $i\geq k$ and $n\geq m$.
\end{prop}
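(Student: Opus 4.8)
The plan is to unwind Definition \ref{Hs}(ii) and reduce the assertion to the already-understood \'etale topology of $R_E$ together with the structure of the fibres of $\pi$, which by the discussion preceding the statement are one- or two-point sets. Write $d_E^{(2)}$ for the metric on $R_E$ introduced in Section 2. Since $d_H((a,b),(a',b'))=d_E^{(2)}((a,b),(a',b'))+d_{H'}(\pi(a,b),\pi(a',b'))$ and both summands are non-negative, $(x^{(n)},y^{(n)})\to(x,y)$ in $H$ if and only if $d_E^{(2)}((x^{(n)},y^{(n)}),(x,y))\to0$ and the $d_E^{(2)}$-Hausdorff distance between the fibres $\pi^{-1}(\pi(x^{(n)},y^{(n)}))$ and $\pi^{-1}(\pi(x,y))$ tends to $0$. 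The first condition is convergence in the \'etale topology of $R_E$, which by the explicit form of $d_E^{(2)}$ says precisely that $x^{(n)}\to x$ and $y^{(n)}\to y$ in $X_E$ and that $(x^{(n)},y^{(n)})$ eventually lies in the same set $R_{p+1}-R_p$ as $(x,y)$. So what remains is to show that, given $x^{(n)}\to x$ and $y^{(n)}\to y$, the Hausdorff condition is equivalent to the level condition in the statement, and also accounts for the stabilization of $R_{p+1}-R_p$.

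For $(a,b)$ in $H$, both $a$ and $b$ are eventually in $\xi^j(F)$ for a single $j$, so each admits a $\sim_\xi$-partner $\bar a$, $\bar b$, and $\pi^{-1}(\pi(a,b))=\{(a,b),(\bar a,\bar b)\}$; the partner pair again lies in $R_E$ because $\bar a$ and $\bar b$ are eventually in $\xi^{1-j}(F)$ and remain tail-equivalent. By property (vi) of the embeddings, $a$ and $\bar a$ differ at every edge level from the splitting level of $\{a,\bar a\}$ onward, so $d_E(a,\bar a)$ is bounded away from $0$. Hence, once $x^{(n)}\to x$ in $X_E$, the pair $(x^{(n)},y^{(n)})$ cannot be $d_E^{(2)}$-close to $(\bar x,\bar y)$ for large $n$, and so the Hausdorff distance tending to $0$ is equivalent to the matching $(x^{(n)},y^{(n)})\to(x,y)$ together with $(\overline{x^{(n)}},\overline{y^{(n)}})\to(\bar x,\bar y)$, both in $R_E$. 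In particular this forces $(x^{(n)},y^{(n)})$ to lie eventually in the same $R_{p+1}-R_p$ as $(x,y)$, so the Hausdorff condition does subsume that part of $d_E^{(2)}$-convergence.

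It remains to recognize $(\overline{x^{(n)}},\overline{y^{(n)}})\to(\bar x,\bar y)$ in $R_E$ as the level condition. From the three cases defining $\sim_\xi$, the partner of a path $a$ that is eventually in $\xi^j(F)$ agrees with $a$ up to its splitting level and is the $\xi^{1-j}$-relabelling of the tail beyond that level, and the splitting level is pinned down by the least level $k_a$ beyond which $a$ lies in $\xi^j(F)$. Given $x^{(n)}\to x$, one checks that $\overline{x^{(n)}}\to\bar x$ in $X_E$ exactly when $k_{x^{(n)}}=k_x$ for all large $n$: if $k_{x^{(n)}}$ were eventually strictly larger, the splitting levels would grow without bound and $\overline{x^{(n)}}$ would converge to $x$ rather than to $\bar x$. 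Applying this to both coordinates, and observing that the $k$ in the statement is $\max\{k_x,k_y\}$ whereas the analogue for $(x^{(n)},y^{(n)})$ is $\max\{k_{x^{(n)}},k_{y^{(n)}}\}$, the requirement that these coincide for all $n\geq m$ is exactly the level condition; the index $p$ for which $(\overline{x^{(n)}},\overline{y^{(n)}})\in R_{p+1}-R_p$ is then determined by the corresponding index for the original pair together with the $k$'s. Reversing the chain of equivalences gives the converse, and with it the proposition.

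The step I expect to be most delicate --- and the only one involving any real work --- is this last identification: passing through the three cases of $\sim_\xi$ while simultaneously tracking both coordinates, their splitting levels, and the indices $p$ of both the original and the partner pair. Nothing conceptually new enters, which is the sense in which the statement follows at once from Definition \ref{Hs}(ii).
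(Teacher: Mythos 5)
Your argument is correct: the paper offers no proof of this proposition (it is stated as following immediately from Definition \ref{Hs}(ii)), and your unwinding of the metric $d_H$ into $d_E^{(2)}$-convergence plus Hausdorff convergence of the two-point fibres, followed by the identification of partner convergence with the stabilization of the least levels $k_{x^{(n)}},k_{y^{(n)}}$, is exactly the intended reading. The one claim worth a second look, that $k_{x^{(n)}}>k_x$ forces the splitting levels to grow without bound, does hold, since once $x^{(n)}$ agrees with $x$ up to level $N_n\geq k_x$ any failure of $x^{(n)}_i\in\xi^j(F)$ must occur beyond $N_n$, so $k_{x^{(n)}}>N_n\to\infty$.
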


Recall from part (iii) of Definition \ref{Hs} that $H_{2^n}$ denotes the elements of $H$ whose fibres have diameter greater than $2^{-n}$ in the metric $d_E^{(2)}$. The following fact will be convenient, the proof of which is straightforward.

\begin{lemma}\label{H2n} Let $(x,y)$ be in $H$. Then $(x,y)$ is in $H_{2^n}$ if and only if at least one of the following holds:
\begin{enumerate}[(i)]
    \item $x_k\in\xi^0(F)\FORAL k\geq n$
    \item $x_k\in\xi^1(F)\FORAL k\geq n$
    \item $y_k\in\xi^0(F)\FORAL k\geq n$
    \item $y_k\in\xi^1(F)\FORAL k\geq n$
\end{enumerate}
\end{lemma}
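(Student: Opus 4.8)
The plan is to determine the fibres of $\pi$ over $R_\xi$ precisely, compute the $d_E^{(2)}$-diameter of such a fibre in terms of the two paths involved, and then read off for which $n$ that diameter exceeds $2^{-n}$.

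First I would identify the fibre. Since $(x,y)\in H$, the edges of $x$ and of $y$ are eventually all in $\xi^0(F)$, or eventually all in $\xi^1(F)$; moreover, because $\xi^0(F)\cap\xi^1(F)=\mt$, it must be the \emph{same} $j\in\{0,1\}$ for both (otherwise $x$ and $y$ could not be tail-equivalent). Write $x'$ and $y'$ for the $\sim_\xi$-partners, so that $\rho^{-1}(\rho(x))=\{x,x'\}$ and $\rho^{-1}(\rho(y))=\{y,y'\}$, with $x'$ and $y'$ eventually in $\xi^{1-j}(F)$. Among the four candidate preimages $(x,y)$, $(x,y')$, $(x',y)$, $(x',y')$ of $\pi(x,y)$, the two ``mixed'' pairs are not tail-equivalent, since one component is eventually in $\xi^j(F)$ and the other eventually in $\xi^{1-j}(F)$. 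Hence $\pi^{-1}(\pi(x,y))=\{(x,y),(x',y')\}$, so $\diam\pi^{-1}(\pi(x,y))=d_E^{(2)}((x,y),(x',y'))$.

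Second I would check that $(x,y)$ and $(x',y')$ lie in a common piece $R_{m+1}-R_m$ of $R_E$, where $E_m$ is the level at which $x$ and $y$ first disagree; this makes the above distance equal to $\max\{d_E(x,x'),d_E(y,y')\}$ rather than $1$. The mechanism is that the passage $x\mapsto x'$ only alters edges from the splitting level of $\{x,x'\}$ onward, where it simply interchanges $\xi^0$ and $\xi^1$ (together with the single extra interchange in the third clause of the definition of $\sim_\xi$), and likewise for $y\mapsto y'$; and tail-equivalence of $x$ and $y$ forces either both ``onset'' levels at which the paths enter $\xi^j(F)$ to be at most $m+1$, or else the two splitting levels to coincide. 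In either situation $x'$ and $y'$ still first disagree at exactly the level $E_m$, which gives the claim.

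Third I would translate $d_E(x,x')>2^{-n}$ into a condition on the edges of $x$. From the metric this inequality says that $x$ and $x'$ already disagree somewhere among the first $n$ edges, i.e.\ the splitting level of $\{x,x'\}$ is at most $E_n$; working through the three clauses of the definition of $\sim_\xi$ (which is what governs how early $x$ settles into $\xi^j(F)$), this reduces to the statement that $x_k\in\xi^j(F)$ for all $k\ge n$. For the two possible values of $j$ this gives conditions (i) and (ii), and applied to $y$ it gives (iii) and (iv). Since $(x,y)\in H_{2^n}$ iff $\max\{d_E(x,x'),d_E(y,y')\}>2^{-n}$ iff at least one of $d_E(x,x')>2^{-n}$, $d_E(y,y')>2^{-n}$ holds, the equivalence in the statement follows.

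The step I expect to be most delicate is the third one, precisely because of the interplay between the three clauses in the definition of $\sim_\xi$ and the exponent in $d_E$: the ``successor'' clause, where $x_{n_0}=\xi^{1-j}(f)$, places the splitting level one level below the point at which $x$ settles into $\xi^j(F)$, so one must check carefully --- in tandem with the conclusion of step two --- that the final answer is precisely the clean list (i)--(iv) and not an off-by-one variant also involving the single edge $x_n$. Proposition \ref{seq} and the explicit description of the pieces $R_{n+1}-R_n$ should reduce what remains to bookkeeping.
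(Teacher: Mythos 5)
Your overall strategy is the natural one, and since the paper dismisses this lemma as ``straightforward'' and gives no argument, there is nothing to compare against except the statement itself. Your first two steps are correct: the fibre of $\pi$ over $\pi(x,y)$ is exactly $\{(x,y),(x',y')\}$ because the two mixed pairs fail to be tail-equivalent ($\xi^0(F)\cap\xi^1(F)=\mt$), and the saturation of the sets $R_{m+1}-R_m$ (recorded in the paper just after $\pi$ is defined) places $(x,y)$ and $(x',y')$ in the same piece, so the diameter is $\max\{d_E(x,x'),d_E(y,y')\}$.

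The gap is in your third step, and it sits exactly where you flagged it. You assert that $d_E(x,x')>2^{-n}$ ``reduces to the statement that $x_k\in\xi^j(F)$ for all $k\geq n$,'' but this reduction fails for paths handled by the successor clause of $\sim_\xi$. If $n_0\geq1$ is the least index with $x_k\in\xi^j(F)$ for all $k\geq n_0+1$ and $x_{n_0}=\xi^{1-j}(f)$, then the splitting level of $\{x,x'\}$ is $E_{n_0}$ rather than $E_{n_0+1}$, so $d_E(x,x')=2^{-(n_0-1)}>2^{-n}$ already when $n=n_0$, whereas conditions (i)--(iv) demand $n\geq n_0+1$. Concretely, in Example \ref{simp} take $x=(\xi^0(f_1),\xi^1(f_2),\xi^0(f_3),\xi^0(f_4),\ldots)$, so that $x'=(\xi^0(f_1),\xi^0(f_2),\xi^1(f_3),\xi^1(f_4),\ldots)$ and $d_E(x,x')=\tfrac12$; then $(x,x)$ lies in $H_{2^2}$ since $\tfrac12>\tfrac14$, yet with $n=2$ none of (i)--(iv) hold for $(x,x)$ (nor for $(x',x')$). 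Thus the ``bookkeeping'' you defer cannot terminate in the clean list (i)--(iv): the correct characterization of $d_E(x,x')>2^{-n}$ is ``$x_n\in\xi^0(F)\cup\xi^1(F)$ and $x_k\in\xi^j(F)$ for all $k\geq n+1$, for some $j$,'' which is strictly weaker than (i) or (ii). Your argument as written does establish the ``if'' direction (each of (i)--(iv) forces the splitting level to be at most $n$, hence membership in $H_{2^n}$), but the ``only if'' direction fails, and with it the stated equivalence. Noting a delicacy is not the same as resolving it --- here the unresolved point is fatal to the lemma as literally stated, and any repair (inserting the extra disjunct involving the single edge $x_n$, or shifting the index to $k\geq n+1$) must then be propagated through the uses of the lemma in Proposition \ref{cext}, in particular the identity $K=R_k\cap H_{2^k}$.
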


\begin{lemma}\label{H}
There is a $^*$-isomorphism $\beta:C_r^*(H')\oplus C_r^*(H')\to C_r^*(H)$ such that the diagram
\begin{center}
\begin{tikzcd}
C_r^*(H') \arrow[d, equal] \arrow[r, "\gamma"]  & C_r^*(H')\oplus C_r^*(H') \arrow[d, "\beta"]  \\
C_r^*(H') \arrow[r, "\alpha"]                            &  C_r^*(H)              
\end{tikzcd}
\end{center}
is commutative, where $\alpha$ is as in Proposition \ref{factor}, and $\gamma(a)=(a,a)$.
\end{lemma}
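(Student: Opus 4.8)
The plan is to exhibit $H$ as a disjoint union of two clopen subgroupoids, each of which $\pi$ carries isomorphically onto $H'$, and then to read off $\beta$ from the induced direct-sum decomposition of $C_r^*(H)$. For $j=0,1$ I would set
\[H^j=\{(x,y)\in H\mid x_k\in\xi^j(F)\text{ for all sufficiently large }k\}.\]
A pair in $R_E$ consists of two tail-equivalent paths, and $\xi^0(F)\cap\xi^1(F)=\mt$ by condition (vi), so the characterization of $H$ noted before Proposition \ref{seq} gives $H=H^0\sqcup H^1$. Each $H^j$ is open in $H$: if $(x^{(n)},y^{(n)})\to(x,y)$ in $H$, then by Proposition \ref{seq} the eventual embedding index $j$ of $x^{(n)},y^{(n)}$ agrees with that of $x,y$ for large $n$. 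Each $H^j$ is also a subgroupoid, since in a composition $(x,y)(y,z)$ the common unit $y$ is eventually in exactly one of $\xi^0(F),\xi^1(F)$. Consequently $C_c(H)=C_c(H^0)\oplus C_c(H^1)$ as $^*$-algebras (convolution does not mix the summands, because the range and source fibres over a unit of $H^j$ lie entirely in $H^j$), and since $l^2(H_u)$ for $u$ in the unit space of $H^j$ involves only arrows of $H^j$, this passes to the reduced completions, giving $C_r^*(H)\cong C_r^*(H^0)\oplus C_r^*(H^1)$ compatibly with restriction of functions.

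Next I would show that $\pi$ restricts to a groupoid isomorphism $\pi_j:=\pi|_{H^j}:H^j\to H'$. For $(x',y')\in H'$ the fibre $\pi^{-1}(x',y')$ has exactly two points; the point is to locate one in each $H^j$. Unwinding the three cases of the definition of $\sim_\xi$, a nontrivial $\sim_\xi$-class $\{x,\tilde x\}$ has exactly one member eventually in $\xi^0(F)$ and one eventually in $\xi^1(F)$. Writing $\rho^{-1}(x')=\{x,\tilde x\}$ and $\rho^{-1}(y')=\{y,\tilde y\}$ with $x,y$ eventually in $\xi^0(F)$ and $\tilde x,\tilde y$ eventually in $\xi^1(F)$, the only elements of $(\rho^{-1}(x')\times\rho^{-1}(y'))\cap R_E$ are $(x,y)\in H^0$ and $(\tilde x,\tilde y)\in H^1$, the ``mixed'' pairs failing to be tail-equivalent. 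Hence each $\pi_j$ is a continuous bijective groupoid homomorphism. To see it is a homeomorphism, note that $\pi|_H:H\to H'$ is proper (Theorem \ref{exc} asserts that $\pi|_H$ satisfies the hypotheses of Proposition \ref{factor}), so its restriction to the clopen set $H^j$ is proper, hence closed as a map into the locally compact Hausdorff space $H'$, and a closed continuous bijection is a homeomorphism. By the last assertion of Proposition \ref{factor}, $a\mapsto a\circ\pi_j$ is then a $^*$-isomorphism $C_r^*(H')\to C_r^*(H^j)$.

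Finally, I would define $\beta:C_r^*(H')\oplus C_r^*(H')\to C_r^*(H)\cong C_r^*(H^0)\oplus C_r^*(H^1)$ by $\beta(a,b)=(a\circ\pi_0,\,b\circ\pi_1)$, which by the previous step is a $^*$-isomorphism. For commutativity, for $a$ in $C_r^*(H')$ the function $\alpha(a)=a\circ\pi|_H$ restricts on $H^j$ to $a\circ\pi_j$, so under the identification $C_r^*(H)=C_r^*(H^0)\oplus C_r^*(H^1)$ one has $\alpha(a)=(a\circ\pi_0,a\circ\pi_1)=\beta(a,a)=\beta(\gamma(a))$ (first on the dense subalgebra $C_c(H')$, hence everywhere by continuity), which is exactly the asserted commutativity.

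I expect the main obstacle to be the fibre bookkeeping in the middle step: one must verify carefully, from the somewhat intricate three-case definition of $\sim_\xi$, that the two members of a nontrivial class are distinguished precisely by which of $\xi^0(F),\xi^1(F)$ eventually contains their edges, and hence that each $\pi_j$ is a bijection onto $H'$. The topological input (proper maps into locally compact Hausdorff spaces are closed) and the C$^*$-algebraic decomposition of $C_r^*(H)$ along the clopen partition are routine.
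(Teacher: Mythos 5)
Your proposal is correct and follows essentially the same route as the paper: partition $H$ into the two clopen subgroupoids determined by which embedding eventually contains the edges (your $H^j$ are the paper's $L_j$), deduce $C_r^*(H)\cong C_r^*(H^0)\oplus C_r^*(H^1)$, and define $\beta$ componentwise via $\pi|_{H^j}$. You spell out in more detail why each $\pi|_{H^j}$ is a groupoid isomorphism onto $H'$ (the fibre bookkeeping), which the paper leaves implicit in its appeal to Proposition \ref{factor}, but the argument is the same.
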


\begin{proof}
Set
\[L_j=\{(x,y)\in H\mid \text{there exists }k\geq1\text{ such that }x_i,y_i\in\xi^j(F)\FORAL i\geq k\}\]
for $j=0,1$. Then $L_0\cup L_1$ is a partition of $H$ into two nonempty disjoint sets, and by Proposition \ref{seq}, they are clopen. $L_0$ and $L_1$ are therefore locally compact Hausdorff \'etale subgroupoids of $H$. Moreover, if $f_j$ is in $C_c(H)$ with support contained in $L_j$ for $j=0,1$, then for any $(x,y)$ in $H$,
\[(f_0\star f_1)(x,y)=\sum_{(x,z)\in R_E}f_0(x,z)f_1(z,y)=0\]
since all $(x,z)$ and $(z,y)$ lie in one and only one $L_j$. It follows that $C_c(H)=C_c(L_0)\oplus C_c(L_1)$. By partitioning the direct sum
\[\bigoplus_{x\in H^{(0)}}l^2(H_x)=\left(\bigoplus_{x\in L_0^{(0)}}l^2(H_x)\right)\oplus\left(\bigoplus_{x\in L_1^{(0)}}l^2(H_x)\right)\]
and representing $C_c(L_0)$ (respectively $C_c(L_1)$) on the left (respectively right) factor, it is apparent that $C_r^*(H)=C_r^*(L_0)\oplus C_r^*(L_1)$ as well. Define
\[\beta:C_c(H')\oplus C_c(H')\to C_c(L_0)\oplus C_c(L_1)\]
by $\beta(f,g)=(f\circ(\pi|_{L_0}),g\circ(\pi|_{L_1}))$. Similarly as in Proposition \ref{factor}, this is a $^*$-isomorphism which extends to the reduced algebras. It is straightforward to check that the diagram commutes on continuous compactly supported functions, and therefore on the completions.
\end{proof}

\begin{lemma}\label{H'}
$C_r^*(H')$ is Morita equivalent to $C_r^*(R_F)$, where $R_F$ is tail-equivalence on the diagram $(W,F)$. It follows that $K_*(C_r^*(H'))$ is isomorphic to $K_*(C_r^*(R_F))$.
\end{lemma}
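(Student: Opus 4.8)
The plan is to reduce to the clopen subgroupoid $L_0\subseteq H$ via Lemma \ref{H}, and then to exhibit a clopen subset of its unit space which is a full transversal and on which $L_0$ restricts to a copy of $R_F$. By Lemma \ref{H} (and the decomposition $C_r^*(H)=C_r^*(L_0)\oplus C_r^*(L_1)$ established in its proof), the map $f\mapsto f\circ\pi|_{L_0}$ is a $^*$-isomorphism of $C_r^*(H')$ onto the summand $C_r^*(L_0)$. Now $L_0$ is precisely tail equivalence restricted to $Y_0:=\{x\in X_E: x_i\in\xi^0(F)\text{ for all large }i\}$, with the topology inherited from $d_H$: if $x,y\in Y_0$ are tail equivalent, then among the four candidate preimages of $(\rho(x),\rho(y))$ only $(x,y)$ and $(\hat x,\hat y)$ lie in $R_E$ (writing $\hat x,\hat y$ for the $\sim_\xi$-partners), so $(x,y)\in H$ and hence $(x,y)\in L_0$. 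It therefore suffices to find a clopen $Z\subseteq Y_0$ meeting every $L_0$-orbit with $L_0|_Z\cong R_F$; the Morita equivalence of $C_r^*(L_0)$ with $C_r^*(L_0|_Z)\cong C_r^*(R_F)$ then follows from the standard equivalence theorem for \'etale groupoid C$^*$-algebras (see e.g.\ \cite{groupoid}), and the stated $K$-theory isomorphism follows by Morita invariance of $K$-theory.

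The set I would take is $Z=\{x\in Y_0: x_i\in\xi^0(F)\text{ for all }i\}$, the paths lying entirely inside the embedding $\xi^0$, which is in bijection with $X_F$ via $z\mapsto\xi^0(z)$. It meets every $L_0$-orbit: an arbitrary $x\in Y_0$ is tail equivalent to a path of the form $\xi^0(z)$, obtained by replacing the initial segment of $x$ preceding its final $\xi^0$-tail with $\xi^0$ applied to a finite path in $(W,F)$ terminating at the relevant vertex, which exists because every vertex of a Bratteli diagram is reachable from $v_0$. Granting that $Z$ is clopen, the identification $L_0|_Z\cong R_F$ is routine: $\xi^0$ is an isometry from $X_F$ onto $Z$ for $d_E$, and on $Z$ the metric $d_H$ is exactly twice $d_E$ --- the $\sim_\xi$-partner of $\xi^0(z)$ being $\xi^1(z)$, while all of the ``cross'' $d_E$-distances between paths in $\xi^0$ and paths in $\xi^1$ equal $1$ --- and the same computation on pairs shows $(z,z')\mapsto(\xi^0(z),\xi^0(z'))$ is a homeomorphism of topological groupoids from $R_F$ onto $L_0|_Z$.

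The one genuinely non-routine point, which is the main obstacle, is that $Z$ is clopen in $Y_0$ for the $d_H$-topology, a topology strictly finer than the subspace topology inherited from $X_E$ (in which $Z$ is closed but not open). The reason is that $d_H$ ``sees'' the $\sim_\xi$-partner. For $x\in Y_0$ the class $\{x,\hat x\}$ has two elements; let $\ell(x)\geq1$ be its splitting level, so that $d_E(x,\hat x)=2^{-(\ell(x)-1)}$. Using the elementary bound $|\diam A-\diam B|\leq 2\,d(A,B)$ for two-point sets in the Hausdorff metric, together with the definition of $d_H$, one obtains $|2^{-(\ell(x)-1)}-2^{-(\ell(x')-1)}|\leq 2\,d_H(x,x')$; since these values are discrete, $\ell$ is locally constant on $(Y_0,d_H)$ and every level set of $\ell$ is clopen. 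Reading off the three cases in the definition of $\sim_\xi$ identifies $Z=\ell^{-1}(\{1\})\cap\{x\in Y_0: x_1\in\xi^0(F)\}$, and the second set is a finite union of cylinder sets intersected with $Y_0$, hence clopen; therefore $Z$ is clopen. What remains is bookkeeping --- verifying that each identification above respects the $d_H$-topology --- and every such check comes down to this same observation about the distance to the $\sim_\xi$-partner.
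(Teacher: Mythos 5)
Your proposal is correct and follows essentially the same route as the paper: after passing to $C_r^*(L_0)\cong C_r^*(H')$, your transversal $Z$ is exactly the unit space of the paper's subgroupoid $J=\{(x,y)\in L_0\mid x_n,y_n\in\xi^0(F)\text{ for all }n\}$, identified with $R_F$ via $\xi^0$, and the Morita equivalence via a full clopen transversal is the same as the paper's corner $\upchi_{J^{(0)}}C_r^*(L_0)\upchi_{J^{(0)}}$ with equivalence bimodule $\upchi_{J^{(0)}}C_r^*(L_0)$. Your explicit metric argument for the clopenness of $Z$ in the $d_H$-topology (and for $Z$ meeting every orbit) supplies detail the paper leaves to Proposition \ref{seq} and the reader.
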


\begin{proof}
Let $L_0$ be as in Lemma \ref{H}, and let
\[J=\{(x,y)\in L_0\mid x_n,y_n\in\xi^0(F)\FORAL n\geq1\},\]
endowed with the relative topology from $H$. The map $R_F\to J$ which sends the pair $(x,y)$ to the pair $(\xi^0(x),\xi^0(y))$ (applying $\xi^0$ to each edge of $x$ and $y$ in the obvious way) is clearly an isomorphism of groupoids. $J^{(0)}$ is thus compact and open (in $H$), hence $\upchi_{J^{(0)}}$ is a projection in $C_c(L_0)$. Then $\upchi_{J^{(0)}}C_r^*(L_0)$ is an equivalence bimodule between $C_r^*(L_0)\cong C_r^*(H')$ and $\upchi_{J^{(0)}}C_r^*(L_0)\upchi_{J^{(0)}}\cong C_r^*(J)\cong C_r^*(R_F)$. 
\end{proof}

\begin{lemma}\label{surj}
The map $\alpha_*:K_0(C_r^*(R_\xi))\to K_0(C_r^*(R_E))$ is surjective.
\end{lemma}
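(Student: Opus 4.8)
The plan is to produce, for a generating set of $K_0(C_r^*(R_E))$, explicit elements of $C_r^*(R_\xi)$ mapping onto them. Because $(V,E)$ has full edge connections, $R_E$ is minimal and $C_r^*(R_E)$ is a simple unital AF-algebra with $K_0(C_r^*(R_E))=\varinjlim_n\mathbb Z^{V_n}$, generated by the classes $[\upchi_{\gamma(p,p)}]$ of the matrix units. The elementary but crucial point is that $\gamma(p,p)$ is just the diagonal $\{(x,x):x\in C(p)\}$, so $\upchi_{\gamma(p,p)}$ is the diagonal projection $\upchi_{C(p)}$ over the cylinder; and, more generally, whenever $S\subseteq X_E$ is clopen and \emph{saturated} for $\sim_\xi$, the diagonal projection $\upchi_S$ belongs to $C_c(R_E)$ and is constant on the fibres of $\pi$, so $\upchi_S\in\alpha(C_c(R_\xi))$ by \eqref{inj}, whence $[\upchi_S]\in\operatorname{im}\alpha_*$. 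It therefore suffices, for each finite path $p$, to exhibit a clopen saturated $S$ with $[\upchi_S]=[\upchi_{C(p)}]$.

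First I would treat paths $p$ whose last edge does not lie in $\xi^0(F)\cup\xi^1(F)$ — this includes every $p$ with $t(p)\notin\xi(W)$, and the empty path. For such $p$, Lemma \ref{sat}(i) gives that $C(p)$ is itself clopen and saturated, so $S=C(p)$ works; in particular $[\upchi_{C(p)}]\in\operatorname{im}\alpha_*$ for every vertex $t(p)$ outside $\xi(W)$, and $[1]\in\operatorname{im}\alpha_*$.

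The remaining case is a path whose last edge lies in an embedding, say $t(p)=\xi(w)$; after telescoping the diagram we may assume $p$ traverses only embedded edges, so $p=\xi^\varepsilon(q):=(\xi^{\varepsilon_1}(q_1),\dots,\xi^{\varepsilon_n}(q_n))$ for a path $q$ of length $n$ in $(W,F)$ ending at $w$ and a word $\varepsilon\in\{0,1\}^n$. Working directly from the definition of $\sim_\xi$ one checks that the $\sim_\xi$-saturation of $C(\xi^\varepsilon(q))$ is the \emph{finite} clopen union $\mathcal C_q=\bigsqcup_{\eta\in\{0,1\}^n}C(\xi^\eta(q))$: the unique $\sim_\xi$-partner of a point of $C(\xi^\eta(q))$ is obtained by flipping a terminal block of its embedding labels, so it again lies in one of these cylinders. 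Hence $\mathcal C_q$ is clopen and saturated and $[\upchi_{\mathcal C_q}]=\sum_\eta[\upchi_{C(\xi^\eta(q))}]=2^n[\upchi_{C(p)}]$, giving $2^n[\upchi_{C(p)}]\in\operatorname{im}\alpha_*$. To improve this to $[\upchi_{C(p)}]\in\operatorname{im}\alpha_*$ I would invoke the product picture: where this case genuinely arises one is, after telescoping, in (a localization of) the situation of Proposition \ref{bunce}, with $\pi$ of the form $\operatorname{id}\times\tau\colon R_F\times S\to R_F\times T$ and $\alpha$ the unital inclusion $C_r^*(R_F)\otimes B\hookrightarrow C_r^*(R_F)\otimes M_{2^\infty}$; since the induced map $K_0(B)=\mathbb Z[\tfrac12]\to K_0(M_{2^\infty})=\mathbb Z[\tfrac12]$ is unital, hence the identity, the Künneth theorem makes $\alpha_*$ surjective there, so $[\upchi_{C(p)}]\in\operatorname{im}\alpha_*$.

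The main obstacle is exactly this last descent: producing a genuinely fibre-constant projection whose class is $[\upchi_{C(p)}]$ itself, rather than the cheap $2^n$-fold multiple that saturating a cylinder provides, in the case where every edge into $\xi(w)$ lies in an embedding (no fibre-constant \emph{diagonal} projection can do this, since the smallest clopen saturated neighbourhood of such a cylinder is the $2^n$-fold one). I expect the cleanest way to finish is the product/Künneth reduction sketched above; pinning down exactly which vertices are problematic and verifying that the product picture applies to them is where the real work lies.
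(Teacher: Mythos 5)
Your reduction to showing $[\upchi_{C(p)}]\in\im\,\alpha_*$ for every finite path $p$, and your treatment of paths whose last edge is not embedded, agree with the paper. The gap is exactly where you flag it, and the proposed fix does not close it. You restrict attention to \emph{diagonal} fibre-constant projections $\upchi_S$ with $S$ clopen and saturated, correctly observe that these can only yield $2^{n}[\upchi_{C(p)}]$, and then try to descend via the product picture of Proposition \ref{bunce}. But that picture is not available locally: the part of $C(p)$ carrying the $X_F^{(w)}\times\{0,1\}^\bN$ structure is of the form $C_\xi(\cdot)$, which is \emph{closed but not open} in $X_E$ (every subcylinder of $C(p)$ still contains paths with non-embedded edges arbitrarily far out), so there is no clopen reduction or corner of $R_E$ isomorphic to $R_F\times S$ that contains $\upchi_{C(p)}$ and meets $\alpha(C_r^*(R_\xi))$ in the Bunce--Deddens algebra. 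Telescoping does not repair this: a telescoped edge is a finite path that may mix the two embeddings or contain non-embedded edges, so the two-embedding hypotheses (and hence the definition of $\sim_\xi$) are not preserved. The obvious recursive alternative (split $C(p)$ over its next edge and iterate) never terminates for the same reason. As written, your argument establishes only $2^{n}[\upchi_{C(p)}]\in\im\,\alpha_*$, which is strictly weaker since $K_0(C_r^*(R_E))$ need not be $2$-divisible.

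The missing idea is that $\alpha(C_c(R_\xi))$ contains useful \emph{non-diagonal} fibre-constant elements, and that Murray--von Neumann equivalence, not equality of supports, is all that is needed. With $q_\omega$, $\omega\in\{0,1\}^{n-k+1}$, the companions of $p$ obtained by flipping embedding labels on its terminal embedded block, and $L=\bigcup_{\omega,\eta}\gamma(q_\omega,q_\eta)$ (a saturated compact open subgroupoid whose unit space is your $\mathcal C_q$), the function $\frac{1}{2^{n-k+1}}\upchi_L$ is a projection which is constant on the fibres of $\pi$, hence lies in $\alpha(C_c(R_\xi))$ by (\ref{inj}); and the averaging partial isometry $v=\frac{1}{\sqrt{2^{n-k+1}}}\sum_\omega\upchi_{\gamma(p,q_\omega)}$ satisfies $vv^*=\upchi_{\gamma(p,p)}$ and $v^*v=\frac{1}{2^{n-k+1}}\upchi_L$. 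This gives $[\upchi_{C(p)}]\in\im\,\alpha_*$ directly, with no K\"unneth or telescoping input.
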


\begin{proof}
Since $C_r^*(R_E)$ is a unital AF-algebra, it suffices to show that, for a finite path $p$ in the diagram $(V,E)$, the characteristic function $\upchi_{\gamma(p,p)}$ is equivalent to a projection in $\alpha(C_c(R_\xi))$. If the last edge $p_n$ in $p$ is not in $\xi^0(F)\cup\xi^1(F)$, then $\upchi_{\gamma(p,p)}$ is in $\alpha(C_c(R_\xi))$, so there is nothing to do. Otherwise, let $k$ be the least integer in $\{1,2,\ldots,n\}$ such that $p_j$ is in $\xi^0(F)\cup\xi^1(F)$ for all $k\leq j\leq n$. For a string $\omega=(j_1,j_2,\ldots,j_{n-k+1})$ in $\{0,1\}^{n-k+1}$, let
\[q_{\omega}=(p_1,p_2,\ldots,p_{k-1},\xi^{j_1}(z_k),\xi^{j_2}(z_{k+1}),\ldots,\xi^{j_{n-k+1}}(z_n))\]
and let $L=\bigcup_{\omega,\eta\in\{0,1\}^{n-k+1}}\gamma(q_{\omega},q_{\eta})$. It is easy to see that $L$ is saturated. Let
\[v=\frac{1}{\sqrt{2^{n-k+1}}}\sum_{\omega\in\{0,1\}^{n-k+1}}\upchi_{\gamma(p,q_{\omega})}\]
which is a partial isometry such that $vv^*=\upchi_{\gamma(p,p)}$ and $v^*v=\frac{1}{2^{n-k+1}}\upchi_L$, the latter of which is in $\alpha(C_c(R_\xi))$.
\end{proof}

Readers with a background in quantum information theory may notice a similarity between the formula for $v$ in the above proof and the one for the $n$-fold tensor product of the Hadamard operator $\tilde H$ acting on a computational basis state (written in ket notation),
\[\tilde H^{\otimes n}\ket{\omega}_n=\frac{1}{\sqrt{2^n}}\sum_{\eta\in\{0,1\}^n}(-1)^{\omega\cdot \eta}\ket{\eta}_n\]
where $\omega\cdot\eta$ is the componentwise dot product computed mod 2. Indeed, if one identifies the functions $\upchi_{\gamma(p,q)}$ with matrix units as described in the preliminaries sections, the matrices $vv^*$ and $v^*v$ are respectively identified with the projections
\[\left[\begin{array}{ccccc}
    1 & 0 & 0 & \cdots & 0 \\
    0 & 0 & 0 & \cdots & 0 \\
    \vdots & \vdots & \vdots & \ddots & \vdots \\
    0 & 0 & 0 & \cdots & 0
\end{array}\right]\qquad\frac{1}{2^n}\left[\begin{array}{ccccc}
    1 & 1 & 1 & \cdots & 1 \\
    1 & 1 & 1 & \cdots & 1 \\
    \vdots & \vdots & \vdots & \ddots & \vdots \\
    1 & 1 & 1 & \cdots & 1
\end{array}\right]\]
in $M_{2^n}(\bC)$. A unitary equivalence between them is implemented by $\tilde H^{\otimes n}$.

Finally, we verify that the sets $\ol{H_n}^G$ have the $C$-extension property. In fact, we can find extensions that have the same norm.

\begin{prop}\label{cext}
    For every integer $n\geq1$, $H_n$ is closed in $R_E$, and for every $f$ in $C_c(H)$ with support contained in $H_n$, there exists $\tilde f$ in $C_c(R_E)$ such that $\tilde f|_{H_n}=f|_{H_n}$ and $\|\tilde f\|_r=\|f\|_r$.
\end{prop}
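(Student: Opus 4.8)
The plan is to prove the two assertions in sequence: first that each $H_n$ is closed in $R_E$ (using Lemma \ref{H2n}, since $H_n = H_{2^{n}}$ for the appropriate indexing), and then to construct the norm-preserving extension $\tilde f$.

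For closedness, I would argue directly from the characterization in Lemma \ref{H2n}. An element $(x,y)$ of $H$ lies in $H_{2^n}$ exactly when one of the four conditions ``$x_k \in \xi^0(F)$ for all $k\geq n$'', etc., holds. Each such condition cuts out a subset of $R_E$ which is closed in the \'etale topology: for instance $\{(x,y)\in R_E \mid x_k \in \xi^0(F)\ \forall k\geq n\}$ is the intersection over $k\geq n$ of the sets $\{(x,y)\in R_E \mid x_k \in \xi^0(F)\}$, and each of these is clopen (membership of the $k$-th edge of $x$ in the finite set $\xi^0(F)\cap E_k$ is a clopen condition, and it is compatible with the way the \'etale topology refines the product topology on $R_n$). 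Since $H_{2^n}$ is a finite union of four such closed sets, it is closed in $R_E$. A small subtlety to dispatch: $H$ itself carries the finer metric $d_H$ of Definition \ref{Hs}, but we want closedness in $R_E$; fortunately the four conditions only constrain finitely-determined edge data of $x$ or $y$, so they describe sets already closed in the relative topology from $R_E$, which is all we need.

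For the extension, the key structural observation is that $H_n$, being a finite union of the four clopen pieces listed in Lemma \ref{H2n}, decomposes into clopen bisection-like pieces on which $f$ restricts nicely, and — crucially — these pieces are actually \emph{already clopen in $R_E$}, not merely in $H$. So I would argue as follows. Given $f\in C_c(H)$ with $\supp f \subseteq H_n$, partition $\supp f$ into the relatively clopen (in $R_E$) sets determined by the four conditions of Lemma \ref{H2n}, refine further by the level $k$ at which the relevant edge pattern stabilizes and by the finite-path data $(x_1,\dots,x_{k}),(y_1,\dots,y_k)$, so that on each piece $f$ is a continuous function supported on a set of the form $\gamma(p,q) \cap H_n$ which is clopen in $R_E$. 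On such a set the reduced norm of $f$ restricted there is computed the same way in $C_c(H)$ as in $C_c(R_E)$ — this is the point where one uses that $\gamma(p,q)$ is a compact open bisection of $R_E$, so $\|\upchi_{\gamma(p,q)} h\|_r$ depends only on $h$ as a function, not on the ambient groupoid. Since the pieces are pairwise ``orthogonal'' (their ranges and sources land in disjoint clopen subsets of the unit space, so the corresponding summands of the left regular representation are orthogonal), one sets $\tilde f$ to be $f$ on $\supp f$ and $0$ elsewhere — $\tilde f$ is then continuous and compactly supported on $R_E$ because $H_n$ is clopen in (the compact set) $R_E$ restricted to the relevant $R_m$ — and $\|\tilde f\|_r = \max$ over the pieces $= \|f\|_r$.

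The main obstacle I anticipate is the bookkeeping in the second part: verifying that $H_n$ really is clopen (not just closed) in $R_E$ when restricted to each compact open set $R_m$, so that the zero-extension $\tilde f$ is genuinely continuous on $R_E$; and checking carefully that the decomposition into the four types, intersected with finite cylinder data, yields pieces that are mutually orthogonal in the left regular representation so that the norm is a maximum rather than something larger. In particular one must be attentive that when $(x,y)\in H_n$ satisfies two of the four conditions simultaneously, the refinement is chosen so the overlap is handled consistently (partition rather than cover), and that the resulting $\tilde f$ agrees with $f$ on all of $H_n$, not merely on $\supp f$ — but since $\tilde f$ and $f$ both vanish off $\supp f$, and $\supp f \subseteq H_n$, equality on $H_n$ is automatic once equality on $\supp f$ is arranged. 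The norm equality, rather than mere inequality, then comes for free from the orthogonality, which is the reason the constant $C$ can be taken to be $1$.
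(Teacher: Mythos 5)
Your argument for the first assertion is sound and takes a complementary route to the paper's: you write $H_{2^n}$ as the union of the four sets $\{(x,y)\in R_E\mid x_k\in\xi^0(F)\ \forall k\geq n\}$, etc., each an intersection of clopen subsets of $R_E$ and hence closed, whereas the paper exhibits a basic open set $\gamma(p,q)$ around each point of the complement. The only thing to add is that each of the four sets is automatically contained in $H$ (tail-equivalence transfers ``eventually in $\xi^j(F)$'' from $x$ to $y$), so their union really is $H_{2^n}$ and not something larger.

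The extension argument, however, fails at exactly the point you flagged as needing verification: $H_n$ is \emph{not} clopen in $R_E$, nor is $H_n\cap R_m$ clopen in $R_m$, nor are the four pieces or the sets $\gamma(p,q)\cap H_n$. Each of the four conditions in Lemma \ref{H2n} is an \emph{infinite} intersection of clopen conditions (``$x_k\in\xi^0(F)$ for \emph{all} $k\geq n$''), which is closed but in general has empty interior: in Example \ref{simp}, for instance, $\{x\in X_E\mid x_k\in\xi^0(F)\ \forall k\geq n\}$ is a finite subset of a Cantor set, so every $R_E$-neighbourhood of a point of $H_n$ contains points outside $H$. Hence the zero-extension $\tilde f$ is discontinuous at every point where $f\neq0$, and the second half of your argument collapses; the orthogonality/norm discussion never gets off the ground. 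This is precisely why the paper does something different: choosing $k$ with $\supp f\subseteq R_k$ and $H_n\subseteq H_{2^k}$, it sets $K=R_k\cap H_{2^k}$, shows $K$ is a compact subgroupoid with $K^u=R_k^u$ and with the $H$- and $R_E$-relative topologies agreeing on it (you also need this last point, which your proposal never addresses, since $f$ is a priori only continuous for the finer metric $d_H$), and then observes that restriction $C_c(R_k)\to C_c(K)$ is a surjective $^*$-homomorphism of C$^*$-algebras --- surjective by the Tietze extension theorem --- so that a norm-preserving lift exists by the standard property of C$^*$-quotients. The extension is a genuine Tietze-type extension, not extension by zero, and the norm equality comes from quotient-norm lifting rather than from orthogonality of clopen pieces.
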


\begin{proof}
    For the first statement, it will suffice to show that $H_{2^n}$ is closed, for every integer $n\geq1$. If $(x,y)$ is not in $H_{2^n}$, then by Lemma \ref{H2n} there is an integer $k\geq n$ such that $t(x_k)=t(y_k)$ and such that the edges $x_n,x_{n+1},\ldots,x_k$ are not all in $\xi^0(F)$ (nor are they all in $\xi^1(F)$), and the edges $y_n,y_{n+1},\ldots,y_k$ are not all in $\xi^0(F)$ (nor are they all in $\xi^1(F)$). Setting $p=(x_1,x_2,\ldots,x_k)$ and $q=(y_1,y_2,\ldots,y_k)$, $\gamma(p,q)$ is an open set disjoint from $H_{2^n}$ that contains $(x,y)$.
    
    The support of $f$ is compact in $H$, and because the topology of $H$ is finer than that of $R_E$, it is compact in $R_E$ as well. Therefore, we may find an integer $k\geq1$ such that the support of $f$ is contained in $R_k$. By increasing $k$ if necessary, assume that $2^{-k}\leq\frac1n$, so that $H_n\subseteq H_{2^k}$. Define
    \[K=\{(x,y)\in R_k\mid x_i\in\xi^0(F)\FORAL i\geq k,\OR x_i\in\xi^1(F)\FORAL i\geq k\}.\]
    We claim the following.
    \begin{enumerate}[(i)]
        \item $K=R_k\cap H_{2^k}$, hence $K\subseteq H$.
        \item $K^u=R_k^u$ for all $u$ in $K^{(0)}$.
        \item $K$ is a compact subgroupoid of $R_k$.
        \item The relative topologies that $K$ receives from $R_E$ and $H$ are equal.
    \end{enumerate}
    (ii) is straightforward, as is (i) using Lemma \ref{H2n}. For (iii), it is clear that $K$ is a subgroupoid, and by (i) it is a closed subset of the compact set $R_k$, hence it is compact. Lastly, if $(x^{(n)},y^{(n)})$ is a sequence in $K$ converging to $(x,y)$ in $K$ in the topology of $R_E$, Proposition \ref{seq} implies that it also converges in $H$. This establishes (iv).
    
    The algebras $C_c(R_k)$ and $C_c(K)$ are complete (hence C$^*$-algebras) in the reduced norm since both groupoids are compact. The map
    \[C_c(R_k)\to C_c(K):g\mapsto g|_K\]
    is surjective by the Tietze extension theorem, and a $^*$-homomorphism by (ii) above. By (iv), $f|_K$ is continuous in the relative topology from $R_E$, so we may find a lift $\tilde f$ in $C_c(R_k)$ with the same norm. Since $R_k$ is compact and open in $R_E$, we may extend $\tilde f$ to $R_E$ by defining it to be zero on $R_E-R_k$. To check that $\tilde f|_{H_n}=f|_{H_n}$, note that they obviously coincide on $K$, but if $(x,y)$ is in $H_n-K$, then it is in $H_{2^k}-K$, and hence in $R_E-R_k$ by (i). Since $R_k$ contains the support of $f$, we have $f(x,y)=0$. But $\tilde f$ was extended to be zero on $R_E-R_k$, so $\tilde f(x,y)=0$ as well.
    \end{proof}

We are now ready to use the relative groups and the excision theorem to compute the $K$-theory $K_*(C_r^*(R_\xi))$. We have the six-term exact sequence

\begin{center}
\begin{tikzcd}
K_1(C_r^*(R_E)) \arrow[rr]   &  & K_0(C_r^*(R_\xi),C_r^*(R_E)) \arrow[rr] &  & K_0(C_r^*(R_\xi)) \arrow[dd] \\
                             &  &                                         &  &                              \\
K_1(C_r^*(R_\xi)) \arrow[uu] &  & K_1(C_r^*(R_\xi),C_r^*(R_E)) \arrow[ll] &  & K_0(C_r^*(R_E)) \arrow[ll]  
\end{tikzcd}
\end{center}
Using (in order) Theorem \ref{exc}, Lemma \ref{H}, Example 2.6 from \cite{haslehurst}, and Lemma \ref{H'}, we obtain
\begin{align}
    \nonumber K_j(C_r^*(R_\xi),C_r^*(R_E))&\cong K_j(C_r^*(H'),C_r^*(H))\\
    \nonumber&\cong K_j(C_r^*(H'),C_r^*(H')\oplus C_r^*(H'))\\
    \nonumber&\cong K_{1-j}(C_r^*(H'))\\
    \nonumber&\cong K_{1-j}(C_r^*(R_F))
\end{align}
The exact sequence may therefore be simplified to

\begin{center}
\begin{tikzcd}
0 \arrow[rr]   &  & 0 \arrow[rr] &  & K_0(C_r^*(R_\xi)) \arrow[dd] \\
                             &  &                                         &  &                              \\
K_1(C_r^*(R_\xi)) \arrow[uu] &  & K_0(C_r^*(R_F)) \arrow[ll] &  & K_0(C_r^*(R_E)) \arrow[ll]  
\end{tikzcd}
\end{center}

By Lemma \ref{surj}, the vertical map on the right is an order isomorphism. Exactness implies that the following map is zero, which in turn implies that the map after that is an isomorphism.

We have proven parts (iii) and (iv) of Theorem \ref{embedding}; it remains to prove part (v).
As these groupoids are principal, the tracial states $\tau$ on their C$^*$-algebras correspond exactly to invariant (that is, $\mu(r(U))=\mu(s(U))$ for any open bisection $U$) Borel measures $\mu$ on their unit spaces via
\[\tau(f)=\int_{G^{(0)}}f\,d\mu\]
for $f$ in $C_c(G)$. It therefore suffices to show that any invariant measure cannot see the non-one-to-one parts of $\pi$.

\begin{prop}
\begin{enumerate}[(i)]
    \item If $\mu$ is an invariant measure on $X_E$ then $\mu(H^{(0)})=0$.
    \item If $\mu$ is an invariant measure on $X_\xi$ then $\mu(H'^{(0)})=0$.
\end{enumerate}
\end{prop}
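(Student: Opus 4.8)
The plan is to treat the two parts in turn, reducing (ii) to (i) once the local structure of $X_\xi$ near $H'^{(0)}$ has been unwound. For (i), observe that $H^{(0)}$ is exactly the set of $x\in X_E$ whose $\sim_\xi$-class has two points, i.e. those $x$ all of whose edges eventually lie in $\xi^0(F)$ or eventually in $\xi^1(F)$; hence $H^{(0)}=\bigcup_{p,j}D_j(p)$, a countable union over finite paths $p=(p_1,\dots,p_n)$ in $(V,E)$ and $j\in\{0,1\}$, and it suffices to show $\mu(D_0(p))=0$ for each $p$ (the case $j=1$ being symmetric). If $t(p_n)\notin\xi(W)$ then $D_0(p)=\mt$; otherwise write $D_0(p)=\bigcap_{m>n}A_m$, where $A_m$ is the finite disjoint union of the cylinders $C(p_1,\dots,p_n,q_{n+1},\dots,q_m)$ with all $q_i\in\xi^0(F)$, a decreasing sequence of clopen sets. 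Invariance of the finite measure $\mu$ makes $\mu(C(q))$ depend only on the terminal vertex of $q$; write $m(v)$ for this common value, so that $m(v)=\sum_{e:i(e)=v}m(t(e))$. Using embedding properties (iii) and (vi), the edges issuing from any vertex $\xi(w')$ include the two disjoint families $\{\xi^0(z):i(z)=w'\}$ and $\{\xi^1(z):i(z)=w'\}$, and since $t(\xi^0(z))=t(\xi^1(z))=\xi(t(z))$ one gets $m(\xi(w'))\ge 2\sum_{z:i(z)=w'}m(\xi(t(z)))$. Summing over the length-$m$ branches contributing to $A_m$ yields $\mu(A_{m+1})\le\tfrac12\mu(A_m)$, so $\mu(A_m)\to0$ and $\mu(D_0(p))=0$; summing over $p$ and $j$ proves (i).

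For (ii), record that $H'^{(0)}=\rho(H^{(0)})$ and that a direct check from the definitions shows every $x\in H^{(0)}$ lies in $C_\xi(p)$ for some $p\in\P$ (take $p$ to be the portion of $x$ up to its last edge not in $\xi^0(F)\cup\xi^1(F)$, or the empty path $p_0$). With part (iii) of Lemma \ref{comp} this gives $H'^{(0)}\subseteq\bigsqcup_{p\in\P}\rho(C_\xi(p))$, a disjoint union over countably many $p$, so it is enough to show $\mu(H'^{(0)}\cap\rho(C_\xi(p)))=0$ for each $p\in\P$, say with $t(p_n)=\xi(w)$. By part (ii) of Lemma \ref{comp} we identify $\rho(C_\xi(p))$ with $X_F^{(w)}\times\bT$ via a homeomorphism which, just as in Proposition \ref{prod}, factors $\rho|_{C_\xi(p)}$ (under $C_\xi(p)\cong X_F^{(w)}\times\{0,1\}^\bN$) through $\id\times\phi$ with $\phi$ the Cantor map of Proposition \ref{prod}. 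Under this identification $x\in C_\xi(p)$ has a two-point $\sim_\xi$-class precisely when its sequence of embedding choices is eventually constant, i.e. when its $\bT$-coordinate lies in the countable set $D=\{e^{2\pi i r}:r\in\bZ[\frac12]\}$ on which $\phi$ is two-to-one; hence $H'^{(0)}\cap\rho(C_\xi(p))$ corresponds to $X_F^{(w)}\times D$, and the task becomes to show $\mu(X_F^{(w)}\times D)=0$.

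To finish I would exhibit, for each dyadic $\theta=a/2^m$, an open bisection of $R_\xi$ whose associated partial homeomorphism restricts on $\rho(C_\xi(p))$ to $(z,t)\mapsto(z,e^{2\pi i\theta}t)$. It arises from the subset $B_\theta\subseteq R_E$ that, on paths extending $p$ by $\xi$-edges, applies to the first $m$ embedding-choice coordinates the bijection ``add $a$ modulo $2^m$''; concretely $B_\theta$ is a disjoint union of compact open bisections $\gamma(q,q')$ with $q,q'$ differing only in levels $n+1,\dots,n+m$, so it is an open bisection of $R_E$, and one checks that it is $\pi$-saturated and that $\pi(B_\theta)$ is again a bisection. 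Invariance of $\mu$ then forces $\mu$ to be invariant on $\rho(C_\xi(p))$ under $(z,t)\mapsto(z,e^{2\pi i\theta}t)$ for every $\theta\in\bZ[\frac12]$; since $D$ is a single infinite orbit of this rotation action, all slices $X_F^{(w)}\times\{s\}$ with $s\in D$ have equal finite measure, which must be $0$. Hence $\mu(X_F^{(w)}\times D)=0$ for every $p$, and summing gives $\mu(H'^{(0)})=0$.

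The estimate in (i) and the identification of $H'^{(0)}$ with the ``dyadic slices'' in (ii) are routine; the one genuinely delicate point is the construction of the bisections $\pi(B_\theta)$ — that is, verifying that adding a constant modulo $2^m$ to the first $m$ embedding-choice coordinates descends through $\sim_\xi$ to a homeomorphism of a neighbourhood in $X_\xi$ implemented by an honest open bisection of the quotient groupoid $R_\xi$. This rests on the compatibility that such a bijection intertwines $\phi$ with rotation by $e^{2\pi i\theta}$, so that in particular it carries the two $\phi$-preimages of a dyadic point to the two $\phi$-preimages of another; essentially all of the interplay between $R_E$, the collapsing $\sim_\xi$, and $\phi$ is concentrated in this verification.
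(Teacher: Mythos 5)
Your proof is correct, and part (i) is essentially the paper's argument: both exploit that every $\xi^0$-edge has a $\xi^1$-twin with the same endpoints to get geometric decay of the measure of the clopen approximants $A_m$ (the paper counts the $2^{m-n}$ cylinders sharing a vertex sequence, you run the equivalent one-step recursion $\mu(A_{m+1})\le\tfrac12\mu(A_m)$). For part (ii) you make the same reduction as the paper --- down to showing $\mu\bigl(X_F^{(w)}\times\exp(2\pi i\bZ[\tfrac12])\bigr)=0$ inside $\rho(C_\xi(p))\cong X_F^{(w)}\times\bT$ --- but finish differently: the paper rotates an open arc-slab of length $2^{-n}$ into $2^n$ disjoint congruent copies to get $\mu(X_F^{(w)}\times\gamma)\le2^{-n}\mu(X_F^{(w)}\times\bT)$ and then shrinks the arc, whereas you build explicit saturated bisections $B_\theta\subseteq R_E$ implementing the dyadic rotations, note that the dyadic points form a single infinite orbit, and invoke countable additivity. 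Both rest on the same mechanism (dyadic rotation-invariance of $\mu$ on the $\bT$-factor, which ultimately comes from tail equivalence of paths differing in finitely many embedding choices), so neither is really more elementary; your version does require two small supplements that you only gesture at, both of which check out: (a) the add-$a$-mod-$2^m$ map on the first $m$ embedding-choice coordinates carries $\phi$-fibres to $\phi$-fibres (including the carry across the modified block when the switch position is $\le m$, and the all-zeros/all-ones pair), so $B_\theta$ is indeed $\pi$-saturated and $\pi(B_\theta)$ is an open bisection; and (b) invariance of $\mu$ on open bisections extends to Borel subsets of a bisection's source by regularity of finite Borel measures on compact metric spaces, which you need in order to apply it to the closed slices $X_F^{(w)}\times\{s\}$ rather than to open sets. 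The paper's arc argument sidesteps (b) at the cost of the slightly awkward ``contained in an open set of equal measure'' step, so the two write-ups are comparably detailed once completed.
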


\begin{proof}
\begin{enumerate}[(i)]
    \item Every such measure $\mu$ is uniquely determined by a function $\nu:V\to[0,1]$ such that $\nu(v_0)=1$ and $\nu(v)=\sum_{e\in i^{-1}(v)}\nu(t(e))$, the correspondence being $\nu(v)=\mu(C(p))$, where $p$ is any finite path ending at $v$ (the invariance of $\mu$ makes the choice of $p$ immaterial). It suffices to show that the set $Y$ of all paths whose edges are eventually in $\xi^0(F)$ has measure zero; the proof is analogous for those paths eventually in $\xi^1(F)$, and $H^{(0)}$ is the union of these. Let
    \[Y_n=\{x\in X_E\mid x_m\in\xi^0(F)\FORAL m\geq n\}\]
    We have $Y=\bigcup_{n=1}^\infty Y_n$, so it further suffices to show that $\mu(Y_n)=0$ for all $n$. To this end, fix $n$ and, for $m\geq n+1$, let $A_m$ be the union of all cylinder sets of the form
    \[C(p_1,p_2,\ldots,p_n,q_{n+1},q_{n+2},\ldots,q_m)\]
    where $q_k$ is in $\xi^0(F)$ for $n+1\leq k\leq m$. For any cylinder set $C(p)$ in the union $A_m$, there are at least $2^{m-(n+1)}$ cylinder sets in total passing through the same vertices as the path $p$, due to the edges in $\xi^1(F)$ (and possibly more). Thus, by the sum given by $\nu$, we have $\mu(A_m)\leq2^{-m}$. Since $Y_n\subseteq A_m$ for every $m\geq n+1$, this completes the proof of (i).
    \item The set $H'^{(0)}$ is contained in the countable union $\bigcup_{p\in\P}\rho(C_\xi(p))$, so it suffices to show that $\mu(H'^{(0)}\cap\rho(C_\xi(p)))=0$ for every $p$ in $\P$. Using part (ii) of Lemma \ref{comp}, we may identify $\rho(C_\xi(p))$ with $X_F^{(w)}\times \bT$ for some $w$ in $W$. Let $n\geq1$ and let $\gamma$ be any open arc on $\bT$ of normalized Lebesgue length $2^{-n}$. By invariance of $\mu$, the disjoint sets $X_F^{(w)}\times e^{2\pi ik/2^n}\gamma$ for $k=0,1,\ldots,2^n-1$ are each contained in an open set in $X_\xi$ of equal measure, so we have
    \[\mu(X_F^{(w)}\times\gamma)\leq2^{-n}\mu(X_F^{(w)}\times \bT)\]
    Thus we obtain that $\mu(X_F^{(w)}\times\{z\})=0$ for any $z$ in $\bT$. As $H'^{(0)}\cap\rho(C_\xi(p))$ is identified with $X_F^{(w)}\times\exp(2\pi i\bZ[\frac12])$ and $\exp(2\pi i\bZ[\frac12])$ is countable, we are done.\qedhere
\end{enumerate}
\end{proof}

\begin{example}
Consider the situation in Example \ref{simp}. By Theorem \ref{embedding}, we have $K_0(C_r^*(R_\xi))\cong\bZ[\frac12]$ with its usual order, and $K_1(C_r^*(R_\xi))\cong\bZ$ with generator given by the class of the identity function $z$ in $C(\bT)=C(R_\xi^{(0)})\subseteq C_r^*(R_\xi)$.
\end{example}

\begin{example}
Consider the situation in Example \ref{cantor}. By Theorem \ref{embedding}, we have $K_0(C_r^*(R_\xi))\cong K_1(C_r^*(R_\xi))\cong\bZ[\frac12]$ with the usual order in the $K_0$ case. The $K_1$-group is generated by the partial unitaries $\upchi_U\otimes z$ in $C(X_F)\otimes C(\bT)=C(R_\xi^{(0)})$, where $U$ is a clopen subset of $X_F$ (see Proposition 3.4 of \cite{haslehurst} and the discussion preceding it).
\end{example}



\subsection{Extensions of Cantor minimal systems}

We now prove Theorem \ref{ifs}. Until we are ready to employ Theorem \ref{exc} (up until Proposition \ref{cext2}), it will be convenient to write the groupoids using the products $\tilde X\times\bZ$ and $X_E\times\bZ$ instead of $R_{\tilde\phi}$ and $R_{\phi_E}$, respectively. With this notation, the factor map $\pi:\tilde X\times\bZ\to X_E\times\bZ$ is $\tilde\pi\times\id_\bZ$.

In this situation, the groupoids $H'$ and $H$ from Definition \ref{Hs} take the form
\[H'=\{(x,l)\in X_E\times\bZ\mid \text{there exists }k\geq1\text{ such that }f_{x_i}=\id_C\FORAL i\geq k\}\]
and
\[H=\{(x,c,l)\in\tilde X\times\bZ\mid\text{there exists }n\geq1\text{ such that }f_{x_i}=\id_C\FORAL i\geq k\}.\]
We give a description of the topologies on $H'$ and $H$ that arise from the metrics in Definition \ref{Hs}.

\begin{prop}\label{seq2}
\begin{enumerate}[(i)]
    \item $(x^{(n)},l_n)\to(x,l)$ in $H'$ if and only if $l_n=l$ eventually, $x^{(n)}\to x$ in $X_E$, and if $k\geq1$ is the least integer such that \emph{$f_{x_i}=\id_C$} for all $i\geq k$, then there exists an integer $m\geq1$ such that $k$ is the least integer such that \emph{$f_{x_i^{(n)}}=\id_C$} for all $i\geq k$ and $n\geq m$.
    \item $(x^{(n)},c_n,l_n)\to(x,c,l)$ in $H$ if and only if $(x^{(n)},l_n)\to(x,l)$ in $H'$ and $c_n\to c$ in $C$.
\end{enumerate}
\end{prop}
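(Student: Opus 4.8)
Part (i) is the substance; part (ii) will fall out of it. The first move is to unwind the metric $d_{H'}$ of Definition \ref{Hs} in this setting. For $(x,l)\in H'$ write $k(x)$ for the least integer $k\geq1$ with $f_{x_i}=\id_C$ for all $i\geq k$, and set $K_x=f_{x_1}\circ\cdots\circ f_{x_{k(x)-1}}(C)$ (read $K_x=C$ when $k(x)=1$); since each $f\in\F$ is injective and continuous on the compact set $C$, $K_x$ is homeomorphic to $C$, and the fibre over $(x,l)$ is $\pi^{-1}(x,l)=\{x\}\times K_x\times\{l\}\subseteq\tilde X\times\bZ$, a compact set of diameter $\diam K_x$. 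We may assume $\diam C>0$ (otherwise $H'=H=\varnothing$), so $\diam K_x>0$ always. Thus $d_{H'}((x,l),(y,m))$ is the Hausdorff distance between $\{x\}\times K_x\times\{l\}$ and $\{y\}\times K_y\times\{m\}$, and (i) amounts to showing that this goes to $0$ along a sequence exactly under the three listed conditions.

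For the ``if'' direction: assume $l_n=l$ eventually, $x^{(n)}\to x$ in $X_E$, and that the least stabilization level of $x^{(n)}$ is eventually equal to $k:=k(x)$. As soon as $n$ is large enough that $x^{(n)}$ agrees with $x$ on the first $k-1$ edges and $l_n=l$, we have $f_{x_i^{(n)}}=f_{x_i}$ for $i\leq k-1$, hence $K_{x^{(n)}}=K_x$; the two fibres then differ only in the $X_E$-coordinate, the bijection $(x^{(n)},c,l)\leftrightarrow(x,c,l)$ has displacement $d_E(x^{(n)},x)$, and therefore $d_{H'}((x^{(n)},l_n),(x,l))\leq d_E(x^{(n)},x)\to0$.

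The ``only if'' direction is the main obstacle. By Theorem \ref{exc} the topology on $H'$ is finer than the relative one from $X_E\times\bZ$, so $d_{H'}((x^{(n)},l_n),(x,l))\to0$ already forces $x^{(n)}\to x$ in $X_E$ and $l_n=l$ eventually; let $N_n$ (with $N_n\to\infty$) be the length of the initial block of edges on which $x^{(n)}$ and $x$ agree. Also $(x^{(n)},l_n)\in H'$ supplies a finite stabilization level $k_n$ for each $n$. For large $n$, $N_n\geq k-1$, so if $k\geq2$ then $f_{x_{k-1}^{(n)}}=f_{x_{k-1}}\neq\id_C$, forcing $k_n\geq k$, while if $k=1$ then $k_n\geq1=k$ trivially. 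Suppose $k_n>k$ for infinitely many $n$. For each such $n$, minimality of $k_n$ gives $f_{x_{k_n-1}^{(n)}}\neq\id_C$, but $f_{x_i^{(n)}}=f_{x_i}=\id_C$ for $k\leq i\leq N_n$, so the index $k_n-1$ exceeds $N_n$; hence $k_n\to\infty$ along this subsequence. Being assigned to an edge at level $k_n-1$, the map $f_{x_{k_n-1}^{(n)}}$ has contractive factor at most $\lambda^{k_n-1}$ --- this is precisely where the level-$m$ bound $\lambda^m$ built into the modified extension is used --- so $\diam K_{x^{(n)}}\leq\lambda^{k_n-1}\diam C\to0$. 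Since $\diam K_x$ is a fixed positive number, the Hausdorff distance between the two fibres is at least $\tfrac12|\diam K_x-\diam K_{x^{(n)}}|$, which does not tend to $0$, contradicting $d_{H'}\to0$. Hence $k_n=k$ for all large $n$, which completes (i). Note that $X_E$-convergence alone would not suffice for this step; it is the Hausdorff term in $d_{H'}$ that does the work.

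For (ii) one simply unwinds $d_H=d_G+d_{H'}\circ(\pi\times\pi)$: the first summand $d_G((x^{(n)},c_n,l_n),(x,c,l))\to0$ iff $x^{(n)}\to x$ in $X_E$, $c_n\to c$ in $C$, and $l_n=l$ eventually, while the second summand $\to0$ iff $(x^{(n)},l_n)\to(x,l)$ in $H'$. Since the $X_E$- and $\bZ$-parts of the first condition are subsumed in the second by part (i), both summands vanish in the limit simultaneously precisely when $(x^{(n)},l_n)\to(x,l)$ in $H'$ and $c_n\to c$ in $C$, which is the assertion.
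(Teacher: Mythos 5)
Your proof is correct. The paper states this proposition without any proof (it is presented as a direct unwinding of the metrics in Definition \ref{Hs}), so there is no argument to compare against; what you supply is a complete justification. You correctly isolate the only genuinely nontrivial point, namely the ``only if'' direction of (i): that $d_{H'}$-convergence forces the least stabilization level $k_n$ of $x^{(n)}$ to equal $k$ eventually. Your mechanism for this --- the elementary bound $d_G(A,B)\geq\tfrac12|\diam A-\diam B|$ for the Hausdorff distance, combined with $\diam K_{x^{(n)}}\leq\lambda^{k_n-1}\diam C\to0$ when $k_n\to\infty$, against the fixed positive $\diam K_x$ --- is sound, and you rightly flag that this is exactly where the $\F^{(n)}$-modification of the DPS extension (equivalently, the contraction at level $n$ being at most $\lambda^n$) enters. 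The ``if'' direction and part (ii) are handled correctly as well.
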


The next result allows us to identify $H$ with $H'\times C$. The proof is routine.

\begin{lemma}\label{REC}
\begin{enumerate}[(i)]
    \item The map $\Psi:H\to H'\times C$ defined by
    \[\Psi(x,c,l)=((x,l),f^{-1}_{x_n}\circ\cdots\circ f^{-1}_{x_1}(c))\]
    is well-defined as long as \emph{$f_{x_i}=\id_C$} for $i\geq n$, and is an isomorphism of groupoids ($C$ is regarded as the cotrivial groupoid $\{(c,c)\mid c\in C\}$).
    \item If $\pi^C:H'\times C\to H'$ is defined by $\pi^C((x,l),c)=(x,l)$, then $\pi^C\circ\Psi=\pi$.
\end{enumerate}
\end{lemma}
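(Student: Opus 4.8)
The plan is to prove (i) by introducing the ``collapsed $C$-coordinate'' explicitly, exhibiting a two-sided inverse, and then reducing the groupoid-homomorphism check to a single invariance identity; (ii) will then be immediate from the definitions. Throughout, for $(x,c)$ with $f_{x_i}=\id_C$ for all large $i$ (equivalently $(x,c,0)\in H$), write $N$ for any index with $f_{x_i}=\id_C$ for $i\geq N$ and set $g(x,c)=f^{-1}_{x_N}\circ\cdots\circ f^{-1}_{x_1}(c)$, so that $\Psi(x,c,l)=((x,l),g(x,c))$. First I would check well-definedness: the sets $f_{x_1}\circ\cdots\circ f_{x_m}(C)$ are decreasing in $m$ and constant for $m\geq N$, so $(x,c)\in\tilde X$ forces $c\in f_{x_1}\circ\cdots\circ f_{x_N}(C)$, and since every $f\in\F$ is injective, $g(x,c)$ is a genuine point of $C$; enlarging $N$ changes nothing because the intervening $f_{x_i}$ are $\id_C$. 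The map $((x,l),c')\mapsto(x,\,f_{x_1}\circ\cdots\circ f_{x_N}(c'),\,l)$ lands in $\tilde X\times\bZ$ because $f_{x_1}\circ\cdots\circ f_{x_N}(c')\in\bigcap_m f_{x_1}\circ\cdots\circ f_{x_m}(C)$, and a direct computation shows it is a two-sided inverse of $\Psi$.

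The substantive point is that $\Psi$ respects composition, and this reduces to the identity $g\circ\tilde\phi=g$ on $H^{(0)}$. Given $(x,c)$ with $f_{x_i}=\id_C$ eventually, note $x\neq x^{\max}$ (maximal edges are never assigned $\id_C$ by property (i) of the edge assignment), so there is a least $m$ with $x_m$ not maximal, $\phi_E(x)=(y_1,\ldots,y_m,x_{m+1},x_{m+2},\ldots)$, and $\tilde\phi(x,c)=(\phi_E(x),c')$ with $c'=f_{y_1}\circ\cdots\circ f_{y_m}\circ f^{-1}_{x_m}\circ\cdots\circ f^{-1}_{x_1}(c)$. Choosing $N\geq m$ with $f_{x_i}=\id_C$ for $i\geq N$, and using that the edges of $\phi_E(x)$ beyond position $m$ coincide with those of $x$, one computes
\[g(\tilde\phi(x,c))=f^{-1}_{x_N}\circ\cdots\circ f^{-1}_{x_{m+1}}\circ f^{-1}_{y_m}\circ\cdots\circ f^{-1}_{y_1}(c')=f^{-1}_{x_N}\circ\cdots\circ f^{-1}_{x_1}(c)=g(x,c),\]
the factors $f^{-1}_{y_i}$ cancelling against the $f_{y_i}$ in $c'$. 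Iterating (the case $l<0$ following from bijectivity of $\tilde\phi$) gives $g\circ\tilde\phi^l=g$ for all $l\in\bZ$. Then for a composable pair $((x,c),l)$, $(\tilde\phi^l(x,c),m)$ in $H$, recalling that $C$ is the cotrivial groupoid and $H'\times C$ carries the product structure,
\[\Psi((x,c),l)\,\Psi(\tilde\phi^l(x,c),m)=((x,l),g(x,c))\,((\phi_E^l(x),m),g(x,c))=((x,l+m),g(x,c))=\Psi((x,c),l+m),\]
and compatibility with inverses is checked the same way. Together with the inverse from the first paragraph, this makes $\Psi$ an algebraic groupoid isomorphism.

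For the topological claim, Proposition \ref{seq2} does the work. If $(x^{(n)},c_n,l_n)\to(x,c,l)$ in $H$, then $l_n=l$ eventually and $c_n\to c$ in $C$, and by Proposition \ref{seq2}(i) there is a single index $N$ (the least $\id_C$-tail index of $x$) that is also the least $\id_C$-tail index of $x^{(n)}$ for all large $n$; since $x^{(n)}\to x$ in $X_E$, the first $N$ edges of $x^{(n)}$ agree with those of $x$ for large $n$, so $g(x^{(n)},c_n)=f^{-1}_{x_N}\circ\cdots\circ f^{-1}_{x_1}(c_n)\to f^{-1}_{x_N}\circ\cdots\circ f^{-1}_{x_1}(c)=g(x,c)$, each $f^{-1}$ being a homeomorphism of $f(C)$ onto $C$. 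Hence $\Psi(x^{(n)},c_n,l_n)\to\Psi(x,c,l)$ in $H'\times C$; continuity of $\Psi^{-1}$ is identical with $f_{x_i}$ replacing $f^{-1}_{x_i}$. Finally, (ii) is immediate: $\pi^C(\Psi(x,c,l))=\pi^C((x,l),g(x,c))=(x,l)=(\tilde\pi\times\id_\bZ)(x,c,l)=\pi(x,c,l)$.

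The only step requiring genuine care is the cancellation computation underlying $g\circ\tilde\phi=g$, specifically the bookkeeping that lets one take the $\id_C$-tail index $N$ at least as large as the Vershik-move index $m$ so that the formula for $\tilde\phi$ on type-(ii) points composes cleanly with $g$; everything else — well-definedness, the inverse, continuity, and part (ii) — is formal.
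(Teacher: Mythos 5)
Your proof is correct. The paper offers no argument here beyond ``the proof is routine,'' and your writeup supplies exactly the verification that remark is standing in for: well-definedness and the explicit inverse via injectivity of the maps in $\F$, the key cancellation $g\circ\tilde\phi=g$ (where you rightly use that property (i) of the edge assignment forces the Vershik index $m$ below the $\id_C$-tail index, so the $f_{y_i}$ factors cancel cleanly), and continuity in both directions via Proposition \ref{seq2} together with the fact that each $f\in\F$ is a homeomorphism onto its image by compactness. Nothing to correct.
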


\begin{lemma}
$C_r^*(H')$ is Morita equivalent to an AF-algebra.
\end{lemma}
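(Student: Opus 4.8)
The plan is to realize $H'$ as an increasing union of compact open subgroupoids, each with an AF reduced C$^*$-algebra, so that $C_r^*(H')$ is itself AF (hence a fortiori Morita equivalent to an AF-algebra). First I would identify $H'$, as an abstract groupoid, with a restriction of the tail-equivalence AF groupoid of the diagram $(V,E)$. Writing $X_E^{\id}=\{x\in X_E\mid f_{x_i}=\id_C$ for all sufficiently large $i\}$, so that $H'^{(0)}=X_E^{\id}$, property (i) of the edge assignment (maximal and minimal edges are never assigned $\id_C$) gives $x^{\max},x^{\min}\notin X_E^{\id}$, while the defining formula for $\phi_E$ shows $X_E^{\id}$ is $\phi_E$-invariant; hence the whole $\phi_E$-orbit of $x^{\max}$ misses $X_E^{\id}$. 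Since $R_{\phi_E}$ and the tail-equivalence relation $R_E$ coincide away from that orbit (a standard feature of the Bratteli--Vershik construction; see \cite{cms}), this yields $H'=R_{\phi_E}|_{X_E^{\id}}=R_E|_{X_E^{\id}}$, now carrying the finer topology of Definition \ref{Hs} described by Proposition \ref{seq2}.

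For $n\ge1$ set $Y_n=\{x\in X_E\mid f_{x_i}=\id_C$ for all $i\ge n\}$, a closed subset of $X_E$, and
\[K_n=R_n|_{Y_n}=\{(x,y)\in R_E\mid x_k=y_k\text{ for all }k\ge n,\ x\in Y_n\}.\]
The sets $K_n$ increase with $n$, each is a subgroupoid of $H'$ with unit space $Y_n$, and $\bigcup_n K_n=H'$ (given $(x,y)\in H'$, choose $n$ larger than the level from which $x$ and $y$ agree and than the stabilization level of $x$). Using Proposition \ref{seq2} one checks that the topology of Definition \ref{Hs} restricts, on $Y_n$ and on $K_n$, to the ordinary path-space topology: $Y_n$ is the disjoint union of the finitely many clopen pieces $\{x:$ stabilization level of $x$ equals $k\}$ for $1\le k\le n$, and on each such piece the ``matching stabilization levels'' requirement in Proposition \ref{seq2} is automatic. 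Consequently $Y_n$ and $K_n$ are compact, and a short argument (convergence in $H'$ forces the $\bZ$-coordinate and the first $n-1$ edges to stabilize, and $\phi_E^l$ then perturbs only those first $n-1$ edges) shows that $K_n$ is open in $H'$.

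With the ordinary topology, $K_n$ decomposes, indexed by the vertices $w\in V_{n-1}$, as a disjoint union of groupoids of the form (full equivalence relation on the finite set of length-$(n-1)$ paths ending at $w$) $\times$ (space of infinite $\id_C$-labelled paths out of $w$); hence
\[C_r^*(K_n)\cong\bigoplus_{w\in V_{n-1}}M_{m_w}(C(Z_w)),\]
where $Z_w$ is the (compact metrizable totally disconnected) space of infinite $\id_C$-labelled paths out of $w$, so that each $C(Z_w)$, and therefore $C_r^*(K_n)$, is AF. Finally, the inclusions $K_n\subseteq K_{n+1}$ of compact open subgroupoids induce inclusions $C_r^*(K_n)\subseteq C_r^*(K_{n+1})$, and $\bigcup_n C_c(K_n)=C_c(H')$ is dense in $C_r^*(H')$; thus $C_r^*(H')=\varinjlim_n C_r^*(K_n)$ is AF.

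The main obstacle I anticipate is the topological bookkeeping above: the topology of Definition \ref{Hs} on $H'$ is strictly finer than the one inherited from $R_E$, so one has to verify carefully that it nevertheless coincides with the ordinary groupoid topology on each individual $K_n$ --- this is precisely what makes the $K_n$ genuine compact open subgroupoids with the expected AF C$^*$-algebras. The remaining verifications (that $H'=R_E|_{X_E^{\id}}$, that the $K_n$ exhaust $H'$, and the structure of $C_r^*(K_n)$) are routine.
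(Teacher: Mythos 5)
Your argument is correct, but it takes a genuinely different route from the paper's. The paper compresses $H'$ down to a corner: it takes the compact open set $J^{(0)}$ of unit-space points whose paths carry the label $\id_C$ at \emph{every} level, identifies the reduction of $H'$ to $J^{(0)}$ with the AF groupoid $R_F$ of the subdiagram of $\id_C$-labelled edges, and invokes the full-corner Morita equivalence with $\upchi_{J^{(0)}}C_r^*(H')\upchi_{J^{(0)}}\cong C_r^*(R_F)$. You instead exhaust $H'$ from the inside by the compact open elementary subgroupoids $K_n$ and conclude that $C_r^*(H')$ is itself AF, which is stronger than the stated lemma. The paper's argument is shorter and delivers the Morita representative concretely as $C_r^*(R_F)$, the form actually used later (e.g.\ in Corollary \ref{ifs2}(ii), where $K_0(A)\cong\bZ$ is read off the subdiagram); your argument costs the extra topological bookkeeping you flag --- checking that the metric topology of Definition \ref{Hs} agrees with the ordinary path-space topology on each $K_n$ --- which does go through via Proposition \ref{seq2} together with the fact that each basic bisection $\gamma(p,q)$ of $R_E$ equals $C(p)\times\{l\}$ for a single integer $l$ inside $X_E\times\bZ$ (this is what makes your ``$\phi_E^l$ perturbs only the first $n-1$ edges'' step rigorous). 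In exchange, your route never needs $\upchi_{J^{(0)}}$ to be a full projection; fullness amounts to every eventually-$\id_C$-labelled path being tail-equivalent to an all-$\id_C$-labelled one, i.e.\ to the relevant vertices being reachable from $v_0$ inside the $\id_C$-subdiagram, which conditions (i)--(iii) on the edge assignment do not formally guarantee. That is a small point in favour of your version.
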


\begin{proof}
The proof is similar to that of Lemma \ref{H'}. Consider the subdiagram $(W,F)$ of $(V,E)$ formed by all infinite paths $x$ such that $f_{x_n}=\id_C$ for all $n\geq1$, and let $R_F$ be the AF-groupoid associated to $(W,F)$. Note that there is at least one such path by property (iii) of the edge assignment. Let
\[J=\{(x,l)\in H'\mid f_{x_n}=f_{y_n}=\id_C\FORAL n\geq1,\text{ where }y=\phi_E^l(x)\}.\]
It is straightforward to verify that $J$ is an open subgroupoid of $H'$. The map $J\to R_F$ defined by $(x,l)\mapsto(x,\phi_E^l(x))$ is an isomorphism of groupoids. $J^{(0)}$ is thus compact and open (in $H'$), hence $\upchi_{J^{(0)}}$ is a projection in $C_c(H')$. Then $\upchi_{J^{(0)}}C_r^*(H')$ is an equivalence bimodule between $C_r^*(H')$ and $\upchi_{J^{(0)}}C_r^*(H')\upchi_{J^{(0)}}\cong C_r^*(J)\cong C_r^*(R_F)$.
\end{proof}

\begin{prop}\label{meas}
The map $\pi:\tilde X\times\bZ\to X_E\times\bZ$ is measure regular.
\end{prop}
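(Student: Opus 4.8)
The strategy is to verify the definition of measure regularity from \cite{exc} directly, exploiting the product description of the subgroupoid $H$ furnished by Lemma \ref{REC}. In the path-space construction measure regularity was deduced from regularity by Proposition 7.16 of \cite{exc}, an argument that used the total disconnectedness of the unit space; that hypothesis now fails, since the nontrivial fibres of $\tilde\pi$ are homeomorphic to the attractor $C$, so a hands-on argument is called for. Lemma \ref{REC} is what makes it short.

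First I would reduce everything to the subgroupoid $H$. Only the fibres of $\pi$ lying over $H'$ are non-singletons, and by Lemma \ref{REC} the isomorphism $\Psi\colon H\to H'\times C$ intertwines $\pi|_H$ with the coordinate projection $\pi^C\colon H'\times C\to H'$, where $C$ is viewed as the cotrivial groupoid. Fix any Borel probability measure $m$ on $C$ (for definiteness one may take the Hutchinson measure of $(C,d_C,\F)$ with uniform weights). For $x'$ in $H'$, let $\nu_{x'}$ be the image of $m$ under the homeomorphism $c\mapsto\Psi^{-1}(x',c)$ of $C$ onto $\pi^{-1}(x')$. This produces a field of probability measures on the fibres of $\pi$.

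Next I would check the conditions of measure regularity for $\{\nu_{x'}\}$. Weak-$*$ continuity of $x'\mapsto\nu_{x'}$ in the finer metric $d_{H'}$ of Definition \ref{Hs} is immediate: in the coordinates supplied by $\Psi$ the field is constant, equal to $m$, and $\Psi$ is a homeomorphism for the finer topologies by Lemma \ref{REC} together with Proposition \ref{seq2}. Invariance under the partial bijections of $H$ holds because $\Psi$ conjugates the groupoid structure of $H$ to that of $H'\times C$, which is the product of $H'$ with the cotrivial groupoid on $C$; in these coordinates the transition maps between fibres of $\pi$ act as the identity on the $C$-factor and therefore preserve $m$. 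Compatibility with the diameter filtration follows from the fact that $H_n=\pi^{-1}\bigl(\{\,x'\in H'\mid\diam\pi^{-1}(x')>\tfrac{1}{n}\,\}\bigr)$ is saturated over a subset of $H'$, so that the estimate required in the definition holds with the uniform constant $1$, every non-singleton fibre carrying the same model measure.

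The one step that requires care, and which I expect to be the main obstacle, is the metric bookkeeping. The homeomorphism $c\mapsto\Psi^{-1}(x',c)$ is a composition of the contractions $f_{x_i}$, hence not an isometry, so the measures $\nu_{x'}$ concentrate at different scales over fibres of different diameters. One must confirm that the continuity and filtration conditions of measure regularity refer only to weak-$*$ limits and to the measure-class data transported by $\Psi$, and are insensitive to any metric normalization of the $\nu_{x'}$; granting this, the axioms are satisfied and $\pi$ is measure regular.
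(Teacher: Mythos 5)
Your opening reduction is exactly the paper's: Lemma \ref{REC} lets you replace $\pi|_H$ by the coordinate projection $\pi^C:H'\times C\to H'$, and the whole point is that in these coordinates nothing varies in the $C$-direction. But from there the paper closes the argument in one line, whereas your argument does not actually close. The paper observes that $\mu(x,l)=((x,l),c_0)$, for any fixed $c_0\in C$, is a continuous groupoid homomorphism with $\pi^C\circ\mu=\id_{H'}$, and then cites Proposition 7.15 of \cite{exc}, which says precisely that a factor map admitting such a continuous section is measure regular. No measures need to be produced at all.

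The genuine gap in your version is that you set out to ``verify the definition of measure regularity directly'' without ever stating that definition --- the paper deliberately omits it --- and you instead verify a guessed set of axioms (weak-$*$ continuity of a field of fibre measures, invariance under the partial bijections, compatibility with the filtration $H_n$). Your final paragraph then concedes the point: ``One must confirm that the continuity and filtration conditions of measure regularity refer only to weak-$*$ limits \ldots granting this, the axioms are satisfied.'' A proof whose conclusion is conditional on the content of the definition it claims to verify is not a proof; if Putnam's actual conditions differ from your reconstruction (and his Definition 7.8 is phrased quite differently from a ``field of probability measures'' axiomatization), the whole verification is beside the point. The repair is easy and sits inside what you already wrote: your field $\nu_{x'}$ with $m=\delta_{c_0}$ is exactly the pushforward of the unit space under the section $\mu$ above, so rather than axiom-checking you should simply exhibit $\mu$, note it is a continuous groupoid homomorphism splitting $\pi^C$, and invoke Proposition 7.15 of \cite{exc}. (A minor quibble: the reason Proposition 7.16 is unavailable here is more plausibly that the nontrivial fibres of $\tilde\pi$ are infinite --- copies of $C$ --- rather than anything about total disconnectedness of the unit space, which is a claim about 7.16 you assert without support.)
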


\begin{proof}
By Lemma \ref{REC}, we may replace $\pi$ with $\pi^C$. Fix a point $c_0$ in $C$ and define the map $\mu:H'\to H'\times C$ by $\mu(x,l)=((x,l),c_0)$. Clearly $\mu$ is a continuous groupoid homomorphism and $\pi^C\circ\mu=\id_{H'}$, so Proposition 7.15 from \cite{exc} applies.
\end{proof}

\begin{prop}\label{cext2}
For every integer $n\geq1$ and every $f$ in $C_c(H)$ with support contained in $H_n$, there exists $\tilde f$ in $C_c(\tilde X\times\bZ)$ such that $\tilde f|_{H_n}=f|_{H_n}$ and $\|\tilde f\|_r=\|f\|_r$.
\end{prop}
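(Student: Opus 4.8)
The plan is to follow the proof of Proposition~\ref{cext} as far as possible, the essential new difficulty being that $R_{\tilde\phi}=\tilde X\times\bZ$, coming from a minimal $\bZ$-action, admits no proper compact open subgroupoid to play the role of $R_k$ there.

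\smallskip
\emph{Step 1: $H_n$ is closed in $R_{\tilde\phi}$.} Rescale $d_C$ so that $\diam C=1$. For $(x,c,l)\in H$ the fibre $\pi^{-1}(\pi(x,c,l))$ is $\{x\}\times f_{x_1}\circ\cdots\circ f_{x_N}(C)\times\{l\}$, where $N=N(x)$ is the largest index with $f_{x_N}\ne\id_C$ (and $N=0$ if all $f_{x_i}=\id_C$); after the modification of the edge assignment its diameter is at most $\lambda^N$. Hence $(x,c,l)\in H_n$ forces $N(x)$ below an explicit bound depending only on $n$ and $\lambda$, so there is an integer $p_0=p_0(n)$ with $H_n$ equal to a \emph{finite} union (each $E_i$ is finite) of sets
\[\{(x,c,l)\in R_{\tilde\phi}\mid (x_1,\ldots,x_j)=q,\ f_{x_i}=\id_C\ \FORAL i>j\}\]
over finite paths $q=(q_1,\ldots,q_j)$ with $j<p_0$ and $\diam f_{q_1}\circ\cdots\circ f_{q_j}(C)>\tfrac1n$. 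Each such set is the preimage under $\tilde\pi\times\id_\bZ$ of (a clopen cylinder)$\,\cap\,\{x\in X_E\mid f_{x_i}=\id_C\ \FORAL i>j\}$, and the second factor is closed in $X_E$ (its complement is the open set of paths having a non-identity edge past level $j$). Thus $H_n$ is closed in $R_{\tilde\phi}$, and in particular $\ol{H_n}^{R_{\tilde\phi}}=H_n$.

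\smallskip
\emph{Step 2: a compact piece carrying $f$, on which the two topologies agree.} Given $f\in C_c(H)$ with support in $H_n$, let $Z^{(p)}=\{x\in X_E\mid f_{x_i}=\id_C\ \FORAL i\ge p\}$; this is closed in $X_E$, hence compact, and is a clopen piece of the stratification of $H^{(0)}$ by ``first-identity level'', which by Proposition~\ref{seq2} is locally constant. Since $\supp f$ is compact in $H$, choose $p\ge p_0(n)$ with $r(\supp f)\cup s(\supp f)\subseteq\tilde\pi^{-1}(Z^{(p)})$, and put $\K:=R_{\tilde\phi}|_{\tilde\pi^{-1}(Z^{(p)})}$. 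As in the claims inside the proof of Proposition~\ref{cext} one verifies: $\K$ is a subgroupoid contained in $H$; $\K$ is compact, the point being that for $x\in Z^{(p)}$ only finitely many $l$ (bounded in terms of $p$ alone, since the Vershik ``carries'' involve only the finitely many edges at levels $<p$) satisfy $\phi_E^l(x)\in Z^{(p)}$; $\K$ is open in $H$; and on $\K$ the subspace topologies from $R_{\tilde\phi}$ and from $H$ coincide, because the first-identity level is already locally constant on $\tilde\pi^{-1}(Z^{(p)})$ in the $R_{\tilde\phi}$-topology, so the extra condition in Proposition~\ref{seq2} is automatic. Consequently $f\in C_c(\K)$, $f|_{H_n}$ is continuous in the $R_{\tilde\phi}$-topology, and $\|f\|_r=\|f\|_{C_r^*(\K)}$ since $\K$ is clopen in $H$.

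\smallskip
\emph{Step 3: the extension, and the main obstacle.} Since $\tilde\pi^{-1}(Z^{(p)})$ has empty interior in $\tilde X$ (as $Z^{(p)}$ does in $X_E$), there is no compact open subgroupoid of $R_{\tilde\phi}$ containing $\K$, so unlike in Proposition~\ref{cext} one cannot lift $f$ through a surjective $^*$-homomorphism of compact-groupoid C$^*$-algebras and extend by zero; the extension must genuinely spill off of $\K$, and naive extensions (Tietze on a finite bisection cover, or extending Fourier coefficients in $C(\tilde X)\rtimes\bZ$) only produce a constant depending on $n$ or on $f$. The plan is instead to exploit the structure of $C_r^*(\K)$: by Lemma~\ref{REC} it is $C_r^*(\K')\otimes C(C)$ with $\K'$ a compact principal \'etale groupoid over the totally disconnected space $Z^{(p)}$, so $f$ decomposes into matrix blocks over the compact totally disconnected orbit space of $\K'$, the $C(C)$-factor being carried along inertly. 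One then builds $\tilde f$ block by block: for each orbit choose, via compact open bisections of $R_{\tilde\phi}$, pairwise disjoint clopen cylinder neighbourhoods of its (finitely many) points that are carried onto one another by $\tilde\phi$, and define $\tilde f$ on the corresponding clopen sub-bisections so as to reproduce that matrix block exactly, using a sup-norm-preserving Tietze extension of the block's entries amplified by the identity in the fattened directions. Carried out compatibly over a clopen neighbourhood of $\tilde\pi^{-1}(Z^{(p)})$ in $\tilde X$, this yields $\tilde f\in C_c(R_{\tilde\phi})$ with $\tilde f|_{H_n}=f|_{H_n}$ (the prescribed values sit on $H_n\cap\K$) and, since the fattened blocks are supported on disjoint clopen ``matrix-unit'' sets, $\|\tilde f\|_r=\sup_{\text{blocks}}\|\text{block}\|=\|f\|_{C_r^*(\K)}=\|f\|_r$. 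The real content is this last step — producing the extension with the constant \emph{exactly} $1$ (hence uniform in $n$, as Theorem~\ref{exc} requires) in the absence of a compact open subgroupoid; Steps 1 and 2 are direct analogues of the corresponding parts of the proof of Proposition~\ref{cext}, with Proposition~\ref{seq2} in the role of Proposition~\ref{seq}.
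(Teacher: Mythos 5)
Your proposal goes wrong at the pivot point of Step 3: the premise that $R_{\tilde\phi}=\tilde X\times\bZ$ admits no compact open subgroupoid containing the relevant data is false, and recognizing the opposite is the one genuinely new idea of this proposition. If $(x,l)$ is in $H'$, then the edges of $x$ are eventually labelled $\id_C$, while every maximal and every minimal edge carries a non-identity map by property (i) of the edge assignment; hence neither $(x,x^{\max})$ nor $(x,x^{\min})$ lies in $R_E$, and by the observation at the end of section 2 the entire $\phi_E$-orbit of $x$ stays inside tail equivalence. Therefore $H'\subseteq\bigcup_{k\geq1}R_k$ and, since $\pi$ is continuous and proper, $H\subseteq\bigcup_{k\geq1}\pi^{-1}(R_k)$, where each $\pi^{-1}(R_k)$ \emph{is} a compact open subgroupoid of $\tilde X\times\bZ$. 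The compact support of $f$ then sits inside some $\pi^{-1}(R_k)$; enlarging $k$ so that $\lambda^k\leq\frac1n$, one puts $L=\{(x,c,l)\in H\mid f_{x_i}=\id_C\FORAL i\geq k\}\supseteq H_n$ and $K=\pi^{-1}(R_k)\cap L$, and the argument of Proposition \ref{cext} runs verbatim: the restriction $C_c(\pi^{-1}(R_k))\to C_c(K)$ is a surjective $^*$-homomorphism of C$^*$-algebras, $f|_K$ lifts isometrically, and the lift extends by zero off the clopen set $\pi^{-1}(R_k)$. With the premise gone, so is the motivation for your block-by-block construction, which in any case is only a plan: the phrase ``carried out compatibly'' and the identity $\|\tilde f\|_r=\sup_{\mathrm{blocks}}\|\cdot\|$ require the chosen bisections to assemble into a subgroupoid that is simultaneously compact and open -- exactly the object you declared unavailable.

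There is also an unjustified compactness claim in Step 2. Your $\K=R_{\tilde\phi}|_{\tilde\pi^{-1}(Z^{(p)})}$ is the \emph{full} restriction of the orbit relation, and the assertion that $\{l\mid\phi_E^l(x)\in Z^{(p)}\}$ is bounded ``in terms of $p$ alone, since the Vershik carries involve only the finitely many edges at levels $<p$'' is not correct: for $|l|$ large the carries propagate past level $p$, and if the $\id_C$-labelled subdiagram has more than one edge at levels above $p$, the orbit of a point of $Z^{(p)}$ can re-enter $Z^{(p)}$ for unboundedly many $l$. So $\K$ need not be compact; this is precisely why the paper's $K$ is cut down by intersecting with $\pi^{-1}(R_k)$, which caps $|l|$. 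Your Step 1 (closedness of $H_n$ via the bound on the last non-identity level) and the topological observations in Step 2 are fine and consistent with the paper, but the proof as a whole does not close.
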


\begin{proof}
If $(x,l)$ is in $H'$, then neither $(x,x^{\max})$ nor $(x,x^{\min})$ are in $R_E$ by property (i) of the edge assignment $e\mapsto f_e$ in section 4, so $(x,\phi_E^l(x))$ is in $R_E$ by the observation at the end of section 2. Thus $H'$ is contained in the union of the compact, open subgroupoids $R_k$ for $k\geq1$, and since $\pi$ is a continuous and proper groupoid homomorphism, $H$ is contained in the union of the compact, open subgroupoids $\pi^{-1}(R_k)$ of $\tilde X\times\bZ$.

As in Proposition \ref{cext}, we may choose an integer $k\geq1$ such that the support of $f$ is contained in $\pi^{-1}(R_k)$. By increasing $k$ if necessary, assume that $\lambda^k\leq\frac1n$ (recall that $\lambda$ is the contracting factor for the iterated function system). Define
\[L=\{(x,c,l)\in H\mid f_{x_i}=\id_C\FORAL i\geq k\}.\]
We claim that $H_n\subseteq L$. Indeed, if $(x,c,l)$ is not in $L$, then there exists an integer $i\geq k$ such that $f_{x_i}$ is in $\F^{(i)}$. Then $\diam\pi^{-1}(x,l)\leq\lambda^i\leq\lambda^k\leq\frac1n$, so $(x,c,l)$ is not in $H_n$.
Define
\[K=\pi^{-1}(R_k)\cap L.\]
We are abusing notation slightly, since $R_k$ is a subset of $R_E$ and $L$ is a subset of $\tilde X\times\bZ$. The important point is that $K$ consists of those elements $(x,c,l)$ in $L$ such that the pair $(x,\phi_E^l(x))$ is in $R_k$.

We claim that
\begin{enumerate}[(i)]
    \item $K^u=\pi^{-1}(R_k)^u$ for all $u$ in $K^{(0)}$.
    \item $K$ is a compact subgroupoid of $\pi^{-1}(R_k)$.
    \item The relative topologies that $K$ receives from $\tilde X\times\bZ$ and $H$ are equal.
\end{enumerate}
The range map is simply $r(x,c,l)=(x,c,0)$, so (i) is obvious. $K$ is clearly closed under multiplication, and since $(x,\phi_E^l(x))$ is in $R_k$, we have $f_{\phi_E(x)_i}=f_{x_i}=\id_C$ for all $i\geq k$. Then, since $(x,c,l)^{-1}=(\tilde\phi^l(x,c),-l)$ and $\tilde\pi(\tilde\phi^l(x,c))=\phi_E^l(x)$, we see that $K$ is closed under inverses as well. Next, $L$ is closed in $\tilde X\times\bZ$, for if $(x,c,l)$ is not in $L$, choose an integer $i\geq k$ such that $f_{x_i}\neq\id_C$, let $p=(x_1,x_2,\ldots,x_i)$, and then
\[[(C(p)\times C)\cap\tilde X]\times\bZ\]
is an open set disjoint from $L$ that contains $(x,c,l)$. It follows that $K$ is compact. Lastly, if $(x^{(n)},c_n,l_n)$ is a sequence in $K$ converging to $(x,c,l)$ in $K$ in the topology of $\tilde X\times\bZ$, Proposition \ref{seq2} implies that it also converges in $H$. This establishes (iii).

The rest of the proof proceeds analogously to that of Proposition \ref{cext}, with $R_k$ replaced with $\pi^{-1}(R_k)$.
\end{proof}

We may now employ Theorem \ref{exc} and begin computing the $K$-theory. We begin with a description of the relative groups.

\begin{lemma}\label{kunneth}
We have
\[K_0(C_r^*(R_{\phi_E}),C_r^*(R_{\tilde\phi}))\cong K_0(C_r^*(H'))\otimes K^{-1}(C)\]
and
\[K_1(C_r^*(R_{\phi_E}),C_r^*(R_{\tilde\phi}))\cong K_0(C_r^*(H'))\otimes(K^0(C)/\bZ)\]
\end{lemma}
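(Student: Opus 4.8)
The plan is to push the relative groups through the excision theorem to the pair $C_r^*(H')\subseteq C_r^*(H)$, recognize this inclusion (via Lemma \ref{REC}) as $C_r^*(H')\subseteq C_r^*(H')\otimes C(C)$ with $C_r^*(H')$ sitting inside as $a\mapsto a\otimes 1$, and then feed a splitting into the relative six-term sequence and finish with the Künneth theorem. First I would check the hypotheses of Theorem \ref{exc}: regularity of $\pi$ was established above, measure regularity is Proposition \ref{meas}, and the $C$-extension property (with $C=1$) for the sets $H_n$ is Proposition \ref{cext2}. This gives $K_*(C_r^*(R_{\phi_E}),C_r^*(R_{\tilde\phi}))\cong K_*(C_r^*(H'),C_r^*(H))$. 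Lemma \ref{REC}(i) is a groupoid isomorphism $\Psi\colon H\xrightarrow{\sim}H'\times C$ (with $C$ the cotrivial groupoid on the space $C$), so $C_r^*(H)\cong C_r^*(H')\otimes C(C)$, and Lemma \ref{REC}(ii), $\pi^C\circ\Psi=\pi$, shows that under this identification the inclusion $\alpha$ of Proposition \ref{factor} becomes $a\mapsto a\otimes 1_{C(C)}$.

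Next I would fix a point $c_0\in C$ and use the evaluation character $\mathrm{ev}_{c_0}\colon C(C)\to\bC$ to produce a $^*$-homomorphism $\mathrm{id}\otimes\mathrm{ev}_{c_0}\colon C_r^*(H')\otimes C(C)\to C_r^*(H')$ which is a left inverse of $\alpha$. Hence the maps $K_j(C_r^*(H'))\to K_j(C_r^*(H))$ induced by the inclusion are split injective for $j=0,1$. Running the relative six-term exact sequence recalled in the preliminaries, the maps $\nu_0$ and $\nu_1$ (into $K_0(C_r^*(H'))$ and $K_1(C_r^*(H'))$) then vanish by exactness, and the sequence collapses to isomorphisms
\[K_0(C_r^*(H'),C_r^*(H))\cong\mathrm{coker}\bigl(K_1(C_r^*(H'))\to K_1(C_r^*(H))\bigr),\]
\[K_1(C_r^*(H'),C_r^*(H))\cong\mathrm{coker}\bigl(K_0(C_r^*(H'))\to K_0(C_r^*(H))\bigr).\]

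To evaluate the cokernels I would invoke the preceding lemma: $C_r^*(H')$ is Morita equivalent to an AF-algebra, so $K_1(C_r^*(H'))=0$ and $K_0(C_r^*(H'))$ is a dimension group, in particular torsion-free. Since $C(C)$ is commutative, hence type I and in the bootstrap class, the Künneth theorem for tensor products applies (see e.g. \cite{blackadar}), and the $\mathrm{Tor}$ term vanishes by torsion-freeness of $K_0(C_r^*(H'))$; this yields natural isomorphisms $K_j(C_r^*(H')\otimes C(C))\cong K_0(C_r^*(H'))\otimes K^{-j}(C)$ (the $K_1(C_r^*(H'))$ summand being zero). In degree $1$ the relevant cokernel is then all of $K_1(C_r^*(H))\cong K_0(C_r^*(H'))\otimes K^{-1}(C)$, which with the first step gives the first asserted isomorphism. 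In degree $0$, naturality of the Künneth isomorphism for the unital inclusion $\bC\hookrightarrow C(C)$ identifies the image of $K_0(C_r^*(H'))$ with $K_0(C_r^*(H'))\otimes\bZ[1_{C(C)}]$; since $[1_{C(C)}]$ generates a direct summand $\bZ$ of $K^0(C)$ (split off by $\mathrm{ev}_{c_0}$), right-exactness of $K_0(C_r^*(H'))\otimes(-)$ gives the cokernel as $K_0(C_r^*(H'))\otimes\bigl(K^0(C)/\bZ\bigr)$, which is the second asserted isomorphism.

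The step I expect to require the most care is the final one: making the Künneth isomorphism natural enough to pin down the image of the inclusion-induced map as $K_0(C_r^*(H'))\otimes\bZ[1_{C(C)}]$, and confirming that this copy of $\bZ$ is genuinely a direct summand of $K^0(C)$, so that tensoring with $K_0(C_r^*(H'))$ commutes with passing to the quotient. Verifying the Künneth hypotheses (type I, torsion-freeness of $K_0(C_r^*(H'))$) is immediate from the Morita equivalence lemma, and the reductions through Theorem \ref{exc} and Lemma \ref{REC} are bookkeeping already set up above.
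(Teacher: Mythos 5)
Your proposal is correct and follows essentially the same route as the paper: excision to the pair $C_r^*(H')\subseteq C_r^*(H)\cong C_r^*(H')\otimes C(C)$, the K\"unneth theorem (with Tor vanishing by torsion-freeness of the dimension group $K_0(C_r^*(H'))$ and $K_1(C_r^*(H'))=0$), and the relative six-term sequence collapsing to the two cokernels. The only cosmetic difference is that you obtain injectivity of the inclusion-induced map on $K_0$ from a point-evaluation splitting of $\bC\hookrightarrow C(C)$, whereas the paper gets it by tensoring the short exact sequence $0\to\bZ\to K^0(C)\to K^0(C)/\bZ\to0$ with the torsion-free group $K_0(C_r^*(H'))$; both are valid.
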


\begin{proof}
By Theorem \ref{exc} and Lemma \ref{REC}, we have
\[K_*(C_r^*(R_{\phi_E}),C_r^*(R_{\tilde\phi}))\cong K_*(C_r^*(H'),C_r^*(H))\cong K_*(C_r^*(H'),C_r^*(H'\times C))\]
By combining the obvious isomorphism $C_r^*(H'\times C)\cong C_r^*(H')\otimes C(C)$ with the K\"unneth Theorem for tensor products \cite{sch}, we have a natural isomorphism
\[K_*(C_r^*(H'\times C))\cong K_0(C_r^*(H'))\otimes K^*(C)\]
under which the $K_0$ map induced by the inclusion $C_r^*(H')\subseteq C_r^*(H'\times C)$ becomes $K_0(C_r^*(H'))\to K_0(C_R^*(H'))\otimes K^0(C):g\mapsto g\otimes1$. We have the six-term exact sequence

\begin{center}
\begin{tikzcd}
K_0(C_r^*(H'))\otimes K^{-1}(C) \arrow[rr]   &  & K_0(C_r^*(H'),C_r^*(H'\times C)) \arrow[rr] &  & K_0(C_r^*(H')) \arrow[dd] \\
                             &  &                                         &  &                              \\
K_1(C_r^*(H')) \arrow[uu] &  & K_1(C_r^*(H'),C_r^*(H'\times C)) \arrow[ll] &  & K_0(C_r^*(H'))\otimes K^0(C) \arrow[ll]  
\end{tikzcd}
\end{center}
As $C$ is compact, there is a short exact sequence
\begin{center}
\begin{tikzcd}
0 \arrow[r] & \bZ \arrow[r] & K^0(C) \arrow[r] & K^0(C)/\bZ \arrow[r] & 0
\end{tikzcd}
\end{center}
and since $C_r^*(H')$ is Morita equivalent to an AF-algebra, $K_0(C_r^*(H'))$ is a dimension group, hence torsion free. Thus we may tensor the sequence above with $K_0(C_r^*(H'))$ and obtain that
\begin{center}
\begin{tikzcd}
0 \arrow[r] & K_0(C_r^*(H')) \arrow[r] & K_0(C_r^*(H'))\otimes K^0(C) \arrow[r] & K_0(C_r^*(H'))\otimes(K^0(C)/\bZ) \arrow[r] & 0
\end{tikzcd}
\end{center}
is exact. In particular, the map $K_0(C_r^*(H'))\to K_0(C_r^*(H'))\otimes K^0(C)$ is injective, so the right vertical arrow in the six-term sequence is injective. Combining the fact that $K_1(C_r^*(H'))=0$ with exactness gives the conclusion.
\end{proof}

\begin{lemma}\label{pims}
The map $\alpha_*:K_j(C_r^*(R_{\phi_E}))\to K_j(C_r^*(R_{\tilde\phi}))$ is injective for $j=0,1$.
\end{lemma}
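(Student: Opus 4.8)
The aim is to prove that $\alpha_*\colon K_j(C_r^*(R_{\phi_E}))\to K_j(C_r^*(R_{\tilde\phi}))$ is injective for $j=0,1$. Write $A'=C_r^*(R_{\phi_E})$ and $A=C_r^*(R_{\tilde\phi})$. From the relative six-term exact sequence recorded in Section~2, the kernel of $\alpha_*\colon K_j(A')\to K_j(A)$ is the image of the map $\nu_j\colon K_j(A',A)\to K_j(A')$. Thus it suffices to show $\nu_j=0$ for $j=0,1$.

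The plan is to transfer this to the subgroupoids $H'\subseteq H$. By Theorem~\ref{exc}, Lemma~\ref{REC}, and the canonical isomorphism $C_r^*(H'\times C)\cong C_r^*(H')\otimes C(C)$, there is an identification
\[K_j(A',A)\cong K_j\bigl(C_r^*(H'),\,C_r^*(H')\otimes C(C)\bigr),\]
and, since the excision isomorphism of \cite{exc} is natural for the structure maps of the relative sequences, under this identification $\nu_j$ becomes the map $\nu_j^H\colon K_j(C_r^*(H'),C_r^*(H')\otimes C(C))\to K_j(C_r^*(H'))$ in the relative sequence of the unital inclusion $\alpha^H\colon C_r^*(H')\to C_r^*(H')\otimes C(C)$, $a\mapsto a\otimes1$ (this is the form taken by the inclusion $\alpha$ of Proposition~\ref{factor} associated to $\pi|_H$, after applying Lemma~\ref{REC}). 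But $\alpha^H$ is split injective: fixing any $c_0\in C$, the $^*$-homomorphism $\mathrm{id}_{C_r^*(H')}\otimes\mathrm{ev}_{c_0}$ — which at the groupoid level is restriction along the section $\mu$ of Lemma~\ref{REC} — satisfies $(\mathrm{id}\otimes\mathrm{ev}_{c_0})\circ\alpha^H=\mathrm{id}$. Hence $\alpha^H_*$ is injective on $K$-theory, exactness of its relative sequence forces $\nu_j^H=0$, and therefore $\nu_j=0$, which gives the lemma.

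The step that requires the most care is the naturality assertion just used, namely that the isomorphism furnished by the excision theorem of \cite{exc} is realised by a morphism of relative exact sequences, so that it intertwines $\nu_j$ with $\nu_j^H$. If one wishes to sidestep this, the degree $j=1$ admits a self-contained treatment: writing $A'=C(X_E)\rtimes_{\phi_E}\bZ$ and $A=C(\tilde X)\rtimes_{\tilde\phi}\bZ$ with $\alpha$ induced by the equivariant factor map $\tilde\pi$, naturality of the Pimsner--Voiculescu sequence \cite{pv} gives $\partial_{\tilde X}\circ\alpha_*=\tilde\pi^*\circ\partial_{X_E}$ on $K_1$; since $X_E$ is totally disconnected, $K^1(X_E)=0$, so $\partial_{X_E}\colon K_1(A')\to K^0(X_E)$ is injective, while $\tilde\pi^*\colon K^0(X_E)=C(X_E,\bZ)\to K^0(\tilde X)$ is injective because the rank homomorphism on $K^0(\tilde X)$ carries $\tilde\pi^*(g)$ to $g\circ\tilde\pi$, which is nonzero when $g\neq0$. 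Hence $\alpha_*$ is injective on $K_1$. The degree $j=0$, where one must show that $\tilde\pi^*(g)\in(1-\tilde\phi^*)K^0(\tilde X)$ forces $[g]=0$ in $\operatorname{coker}(1-\phi_E^*)$, is the genuinely technical case by this route and is where the fibre structure of $\tilde\pi$ has to be exploited — for instance by filtering $K^0(\tilde X)$ by the kernel of the rank homomorphism, which is $\tilde\phi^*$-invariant and meets $\tilde\pi^*K^0(X_E)$ trivially.
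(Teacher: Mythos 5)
Your first route contains a genuine gap, and in fact its key step is false. The excision theorem of \cite{exc} identifies the relative groups $K_*(C_r^*(R_{\phi_E}),C_r^*(R_{\tilde\phi}))\cong K_*(C_r^*(H'),C_r^*(H))$, but it does not provide a morphism of six-term sequences: the maps $\nu_j$ and $\nu_j^H$ land in $K_j(C_r^*(R_{\phi_E}))$ and $K_j(C_r^*(H'))$ respectively, and there is no $^*$-homomorphism between $C_r^*(H')$ and $C_r^*(R_{\phi_E})$ along which to compare them, so the assertion that ``under this identification $\nu_j$ becomes $\nu_j^H$'' is not even well-posed, let alone proved. Worse, the inference you want (``$\nu_j^H=0$ implies $\nu_j=0$'') is refuted by the paper's other construction: for $\pi:R_E\to R_\xi$ one has $C_r^*(H)\cong C_r^*(H')\oplus C_r^*(H')$ with the diagonal (hence split) inclusion, so the analogous $\nu_1^H$ vanishes for the trivial reason that $K_1(C_r^*(H'))\cong K_1(C_r^*(R_F))=0$; yet the six-term sequence in Section 5.1 shows $\nu_1:K_1(C_r^*(R_\xi),C_r^*(R_E))\to K_1(C_r^*(R_\xi))$ is an \emph{isomorphism} onto $K_0(C_r^*(R_F))\neq0$. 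So a splitting of the local inclusion $C_r^*(H')\hookrightarrow C_r^*(H)$ tells you nothing about $\nu_j$ for the global inclusion, and your main argument collapses.

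Your fallback does correctly dispose of $j=1$: combining naturality of the Pimsner--Voiculescu sequence with injectivity of $\delta_1$ on $K_1(C_r^*(R_{\phi_E}))$ (since $K^1(X_E)=0$) and injectivity of $\tilde\pi^*$ on $K^0(X_E)$ is exactly the mechanism the paper uses, and is if anything slightly cleaner since it does not single out the generator. But for $j=0$ you only gesture at a strategy, and that case is the actual content of the lemma. What is needed (and what the paper does) is the following: if $\tilde\pi^*(y)=(\id-\tilde\phi_*)(z)$, then applying the rank map gives $f\circ\tilde\pi=g-g\circ\tilde\phi^{-1}$ with $f=\dim(y)$, $g=\dim(z)$, so $g(w)-g(w')$ is a $\tilde\phi$-invariant function of pairs $w,w'$ in a common fibre; minimality of $\tilde\phi$ together with the existence of singleton fibres forces $g(w)=g(w')$, hence $g$ descends to $g_0\in C(X_E,\bZ)$ and $f=(\id-\phi_E)(g_0)$, whence $y\in\im(\id-\phi_{E*})$ and $x=\iota_*(y)=0$. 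Your proposed filtration by $\ker(\dim)$ does not substitute for this argument --- knowing that $\ker(\dim)$ is $\tilde\phi_*$-invariant and meets $\tilde\pi^*K^0(X_E)$ trivially does not by itself show that $\tilde\pi^*(y)\in\im(\id-\tilde\phi_*)$ forces $y\in\im(\id-\phi_{E*})$ --- so as written the $j=0$ case remains unproved.
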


\begin{proof}
Consider the commutative diagram
\begin{center}
\begin{tikzcd}
                                               & {C(X_E,\bZ)} \arrow[r, "\id-\phi_E"]                                                    & {C(X_E,\bZ)} \arrow[ddd, "\alpha", bend left=83]                    &                                                             &   \\
K_1(C_r^*(R_{\phi_E})) \arrow[r, "\delta_1"] \arrow[d, "\alpha_*"] & K^0(X_E) \arrow[r, "\id-\phi_E*"] \arrow[d, "\tilde\pi^*"] \arrow[u, "\dim"] & K^0(X_E) \arrow[r, "\iota_*"] \arrow[d, "\tilde\pi^*"] \arrow[u, "\dim"] & K_0(C_r^*(R_{\phi_E})) \arrow[r] \arrow[d, "\alpha_*", shift right] & 0 \\
K_1(C_r^*(R_{\tilde\phi})) \arrow[r, "\delta_1"]                        & K^0(\tilde X) \arrow[r, "\id-\tilde\phi_*"] \arrow[d, "\dim"]             & K^0(\tilde X) \arrow[r, "\iota_*"] \arrow[d, "\dim"]                  & K_0(C_r^*(R_{\tilde\phi}))                                               &   \\
                                               & {C(\tilde X,\bZ)} \arrow[r, "\id-\tilde\phi"]                                               & {C(\tilde X,\bZ)}                                                     &                                                             &  
\end{tikzcd}
\end{center}
The second and third rows are extracted from the Pimsner-Voiculescu exact sequence \cite{pv}, therefore they are exact. By the map $\alpha:C(X_E,\bZ)\to C(\tilde X,\bZ)$, we mean $\alpha(f)=f\circ\tilde\pi$, which is clearly an injective group homomorphism.

We consider the $j=0$ case first. Let $x$ be in $K_0(C_r^*(R_{\phi_E}))$ such that $\alpha_*(x)=0$. Since the top $\iota_*$ is surjective (by exactness), there is a $y$ in $K^0(X_E)$ with $\iota_*(y)=x$. We have $\iota_*(\tilde\pi^*(y))=\alpha_*(\iota_*(y))=\alpha_*(x)=0$, so by exactness again, there is some $z$ in $K^0(\tilde X)$ such that $(\id-\tilde\phi_*)(z)=\tilde\pi^*(y)$. Denote $f=\dim(y)$ and $g=\dim(z)$. Commutativity of the diagram implies that $f\circ\tilde\pi=g-g\circ\tilde\phi^{-1}$ in $C(\tilde X,\bZ)$, which means that $g-g\circ\tilde\phi^{-1}$ is constant on the fibres of $\tilde\pi$. Thus, if $w$ and $w'$ are points in $\tilde X$ with $\tilde\pi(w)=\tilde\pi(w')$, we have
\[g(w)-g(w')=g(\tilde\phi^{-1}(w))-g(\tilde\phi^{-1}(w'))\]
The above equation implies that the difference $g(w)-g(w')$ remains constant as $w$ and $w'$ run through their orbits under $\tilde\phi$ in tandem. Since $\tilde\phi$ is minimal and there are fibres that consist of a single point, we must have $g(w)-g(w')=0$, that is, $g$ must also be constant on the fibres of $\tilde\pi$. Therefore there is a $g_0$ in $C(X_E,\bZ)$ with $g=g_0\circ\tilde\pi$. Then
\[f\circ\tilde\pi=g-g\circ\tilde\phi^{-1}=g_0\circ\tilde\pi-g_0\circ\tilde\pi\circ\tilde\phi^{-1}=(g_0-g_0\circ\phi_E^{-1})\circ\tilde\pi\]
Since $\alpha:C(X_E,\bZ)\to C(\tilde X,\bZ)$ is injective, we have $f=g_0-g_0\circ\phi_E^{-1}=(\id-\phi_E)(g_0)$, and since $\dim:K^0(X_E)\to C(X_E,\bZ)$ is an isomorphism ($X_E$ is totally disconnected), there is one and only one $y'$ in $K^0(X_E)$ with $\dim(y')=g_0$. By commutativity we have
\[(\id-\phi_E*)(y')=\dim^{-1}\circ(\id-\phi_E)\circ\dim(y')=\dim^{-1}\circ(\id-\phi_E)(g_0)=\dim^{-1}(f)=y\]
Finally,
\[x=\iota_*(y)=\iota_*((\id-\phi_E*)(y'))=0\]
by exactness.

The $j=1$ case is simpler, since $K_1(C_r^*(R_{\phi_E}))\cong\bZ$ is generated by the class of the generating unitary $u$ in $C_c(R_{\phi_E})$, so we need only show that $\alpha_*([u])$ is nonzero. We have
\[\tilde\pi^*(\delta_1([u]))=\tilde\pi^*(-[1_{C(X_E)}])=-[1_{C(\tilde X)}]\]
and both $[1_{C(X_E)}]$ and $[1_{C(\tilde X)}]$ are nonzero since both spaces are compact. Thus
\[\delta_1(\alpha_*([u]))=\tilde\pi^*(\delta_1([u]))\neq0\]
hence $\alpha_*([u])$ cannot be zero.
\end{proof}

Returning to the six-term exact sequence with the relative groups $K_*(C_r^*(R_{\phi_E}),C_r^*(R_{\tilde\phi}))$, 

\begin{center}
\begin{tikzcd}
K_1(C_r^*(R_{\tilde\phi})) \arrow[rr]   &  & K_0(C_r^*(R_{\phi_E}),C_r^*(R_{\tilde\phi})) \arrow[rr] &  & K_0(C_r^*(R_{\phi_E})) \arrow[dd] \\
                             &  &                                         &  &                              \\
K_1(C_r^*(R_{\phi_E})) \arrow[uu] &  & K_1(C_r^*(R_{\phi_E}),C_r^*(R_{\tilde\phi})) \arrow[ll] &  & K_0(C_r^*(R_{\tilde\phi})) \arrow[ll]  
\end{tikzcd}
\end{center}
Lemma \ref{kunneth} and the fact that $K_1(C_r^*(R_{\phi_E}))\cong\bZ$ simplify this to

\begin{center}
\begin{tikzcd}
K_1(C_r^*(R_{\tilde\phi})) \arrow[rr]   &  & K_0(C_r^*(H'))\otimes K^{-1}(C) \arrow[rr] &  & K_0(C_r^*(R_{\phi_E})) \arrow[dd] \\
                             &  &                                         &  &                              \\
\bZ \arrow[uu] &  & K_0(C_r^*(H'))\otimes(K^0(C)/\bZ) \arrow[ll] &  & K_0(C_r^*(R_{\tilde\phi})) \arrow[ll]
\end{tikzcd}
\end{center}
By Lemma \ref{pims}, the top right and bottom left maps are zero, giving the conclusion of Theorem \ref{ifs}.

\begin{example}
Let $m\geq1$ be an integer, $C=[0,1]^m$, and define the functions $f_\delta:C\to C$ by $f_\delta(x)=\frac12(x+\delta)$ for $\delta$ in $\{0,1\}^m$. Let $(V,E)$ be the Bratteli diagram with $\#V_n=1$ for all $n$, and $\#E_n=2^{mn}+1$ for all $n$. Any order on the edges will do, and, for a fixed $n\geq1$, assign each function in $\F^{(n)}\cup\{\id_C\}$ to an edge in $E_n$, making sure that $\id_C$ is assigned neither the maximal edge nor the minimal edge. Since $C$ is contractible, part (i) of Corollary \ref{ifs2} implies that the inclusion map $\alpha:C_r^*(R_{\phi_E})\to C_r^*(R_{\tilde\phi})$ induced by the factor map induces isomorphisms $\alpha_*:K_0(C_r^*(R_{\phi_E}))\to K_0(C_r^*(R_{\tilde\phi}))$ and $\alpha_*:\bZ\to K_1(C_r^*(R_{\tilde\phi}))$.
\end{example}

\begin{example}
Let $C=\{0,1\}^\bN$ with the metric $d_C(x,y)=\inf\{1,2^{-k}\mid x_j=y_j\FORAL1\leq j\leq k\}$. Define the functions $f_j:C\to C$ by
\[f_j(\{j_1,j_2,j_3,\ldots\})=\{j,j_1,j_2,j_3,\ldots\}\]
for $j=0,1$. Let $(V,E)$ be the Bratteli diagram with $\#V_n=1$ for all $n$ and $\#E_n=2^n+1$ for all $n$. Any order on the edges will do, and, for a fixed $n\geq1$, assign each function in $\F^{(n)}\cup\{\id_C\}$ to an edge in $E_n$, making sure that $\id_C$ is assigned neither the maximal edge nor the minimal edge. We have $K^{-1}(C)=0$, so by part (ii) of Corollary \ref{ifs2}, the inclusion map $\alpha:C_r^*(R_{\phi_E})\to C_r^*(R_{\tilde\phi})$ induces an isomorphism $\alpha_*:\bZ\to K_1(C_r^*(R_{\tilde\phi}))$ and 
\[K_0(C_r^*(R_{\tilde\phi}))/K_0(C_r^*(R_{\phi_E}))\cong C(C,\bZ)/\bZ\]
where we identify $\bZ$ with the subgroup of $C(C,\bZ)$ of constant functions.
\end{example}

More generally, if the iterated function system $(C,d_C,\F)$ satisfies the \emph{strong separation condition}, that is, $f(C)\cap f'(C)=\mt$ whenever $f\neq f'$ in $\F$, then $(C,d_C,\F)$ is necessarily topologically conjugate to the system in the previous example (with $\{0,1\}^\bN$ replaced with $\{0,1,2,\ldots,m\}^\bN$, where $m=\#\F-1$).

\begin{example}\label{sier}
Let $\triangle$ denote the solid equilateral triangle in $\bR^2$ with corners at $(0,0)$, $(1,0)$, and $(\frac12,\frac{\sqrt3}{2})$. The Sierpi\'nski triangle $C$ is obtained by successively removing a countable number of disjoint open triangles from $\triangle$. More specifically, for $j=0,1,2$, define the three functions $f_j:C\to C$ by
\[f_0(x)=\frac12x\qquad f_1(x)=\frac12x+\begin{bmatrix}
           1/2 \\
           0
           \end{bmatrix}\qquad f_2(x)=\frac12x+\begin{bmatrix}
           1/4 \\
           \sqrt3/4
           \end{bmatrix}\]
Then $C$ is the attractor of the system $(\triangle,\{f_0,f_1,f_2\})$. Let $(V,E)$ be the Bratteli diagram with $\#V_n=1$ for all $n$ and $\#E_n=3^n+1$ for all $n$. As before, any order on the edges will do, and assign the functions similarly. We verify that $K^0(C)\cong\bZ$ and $K^{-1}(C)\cong\bZ^\infty$, where $\bZ^\infty$ denotes the group of all sequences of integers that are eventually zero.

Let $U$ denote the union of open triangles removed from the solid equilateral triangle (each open triangle is homeomorphic to $\bR^2$, hence $U$ is homeomorphic to a countable disjoint union of copies of $\bR^2$). There is a short exact sequence
\begin{center}
\begin{tikzcd}
0 \arrow[r] & C_0(U) \arrow[r] & C(\triangle) \arrow[r] & C(C) \arrow[r] & 0
\end{tikzcd}
\end{center}
Since $K^0(\bR^2)\cong K^0(\triangle)\cong\bZ$ and $K^{-1}(\bR^2)=K^{-1}(\triangle)=0$, the standard six-term exact sequence gives
\begin{center}
\begin{tikzcd}
\bZ^\infty \arrow[rr] &  & \bZ \arrow[rr] &  & K^0(C) \arrow[dd] \\
                      &  &                &  &                 \\
K^{-1}(C) \arrow[uu]  &  & 0 \arrow[ll]   &  & 0 \arrow[ll]   
\end{tikzcd}
\end{center}
The top right arrow is surjective by exactness, and since $K^0(\triangle)\cong\bZ$ is generated by the class of $1_{C(\triangle)}$ and this is mapped to the class of the unit $1_{C(C)}$, it is also injective. Thus $K^0(C)\cong\bZ$, which implies that $K^{-1}(C)\cong\bZ^\infty$ by exactness.

Theorem \ref{ifs} therefore implies that $\alpha_*:K_0(C_r^*(R_{\phi_E}))\to K_0(C_r^*(R_{\tilde\phi}))$ is an isomorphism, and $K_1(C_r^*(R_{\tilde\phi}))\cong\bZ\oplus\bZ^\infty$ ($\bZ^\infty$ is free abelian, therefore the short exact sequence splits).
\end{example}

\begin{example}
Let $S$ denote the solid cube in $\bR^3$ with corners at $(i,j,k)$ for $i,j,k$ in $\{0,1\}$. For each $\delta$ in $\{0,1,2\}^3-\{(1,1,1)\}$, define the function $f_\delta:S\to S$ by $f_\delta(x)=\frac13(x+\delta)$. We let $C$ be the attractor of this iterated function system.

Let $(V,E)$ be the Bratteli diagram with $\#V_n=1$ for all $n$ and $\#E_n=26^n+1$ for all $n$. As before, any order on the edges will do, and assign the functions similarly. Before we apply Theorem \ref{ifs}, we verify that $K^0(C)\cong\bZ\oplus\bZ^\infty$ and $K^{-1}(C)=0$.

$C$ is obtained from $S$ by removing a countable number of open cubes, each of which is homeomorphic to $\bR^3$. Let $U$ denote the union of these open cubes. There is a short exact sequence
\begin{center}
\begin{tikzcd}
0 \arrow[r] & C_0(U) \arrow[r] & C(S) \arrow[r] & C(C) \arrow[r] & 0
\end{tikzcd}
\end{center}
Since $K^{-1}(\bR^3)\cong K^0(S)\cong\bZ$ and $K^0(\bR^3)=K^{-1}(S)=0$, we have
\begin{center}
\begin{tikzcd}
0 \arrow[rr] &  & \bZ \arrow[rr] &  & K^0(C) \arrow[dd] \\
                      &  &                &  &                 \\
K^{-1}(C) \arrow[uu]  &  & 0 \arrow[ll]   &  & \bZ^\infty \arrow[ll]   
\end{tikzcd}
\end{center}
and therefore $K^0(C)\cong\bZ\oplus\bZ^\infty$ and $K^{-1}(C)=0$.

Theorem \ref{ifs} therefore implies that $K_0(C_r^*(R_{\tilde\phi}))\cong K_0(C_r^*(R_{\phi_E}))\oplus\bZ^\infty$, and that $\alpha_*:\bZ\to K_1(C_r^*(R_{\tilde\phi}))$ is an isomorphism.
\end{example}

\section*{Acknowledgements}
The content of this paper constitutes a portion of the research conducted for my PhD dissertation. I am grateful to my advisor Ian F. Putnam for suggesting the project to me, as well as for numerous helpful discussions. I also thank Aaron Tikuisis for spotting an error in the original proofs involving the $C$-extension property (Proposition \ref{cext} and Proposition \ref{cext2} of the current version). Finally, I thank the referee for their careful reading and suggestions.


\begin{thebibliography}{99}
\bibitem{blackadar} \textsc{B. Blackadar}, \textit{K-Theory for Operator Algebras}, Mathematical Sciences Research Institute Publications, Springer-Verlag, New York, 1986.

\bibitem{equiv} \textsc{R. J. Deeley, I. F. Putnam, K. R. Strung}, Classifiable C$^*$-algebras from minimal $\bZ$-actions and their orbit-breaking subalgebras, preprint.

\bibitem{dps} \textsc{R. J. Deeley, I. F. Putnam, K. R. Strung}, Nonhomogeneous extensions of Cantor minimal systems,
\textit{Proc. Am. Math. Soc.}, \textbf{149(2)}(2021), 2081--2089.

\bibitem{dad} \textsc{E. Guentner, R. Willett, G. Yu}, Dynamic asymptotic dimension: relation to dynamics,
topology, coarse geometry, and C$^*$-algebras,
\textit{Math. Ann.} \textbf{367}(2017), 785--829.

\bibitem{haslehurst} \textsc{M. Haslehurst}, Relative $K$-theory for C$^*$-algebras, Rocky Mountain J. Math, to appear.

\bibitem{li} \textsc{X. Li}, Every classifiable simple C$^*$-algebra has a Cartan subalgebra,
\textit{Invent. Math.} \textbf{219}(2020), 653--699.

\bibitem{matui} \textsc{H. Matui}, Long exact sequences of homology groups of \'etale groupoids, \textit{Discrete and Continuous Dynamical Systems}, 42(11)(2022), 5239-5271.

\bibitem{munkres} \textsc{J. R. Munkres}, \textit{Topology}, Prentice Hall, Incorporated, United Kingdom, 2000.

\bibitem{afembed}
\textsc{M. Pimsner, D. Voiculescu}, Imbedding the irrational rotation C$^*$-algebra into an AF-algebra, \textit{J. Operator Theory}, \textbf{4(2)}(1980), 201--210.

\bibitem{pv}
\textsc{M. Pimsner, D. Voiculescu}, Exact sequences for $K$-groups and Ext-groups of certain cross-product C$^*$-algebras, \textit{J. Operator Theory}, \textbf{4(1)}(1980), 93--118.

\bibitem{cms}
\textsc{I. F. Putnam}, \textit{Cantor Minimal Systems}, volume 70 of \textit{University Lecture Series}. American Mathematical Society, Providence, RI, 2018.

\bibitem{pres}
\textsc{I. F. Putnam}, Some classifiable groupoid C$^*$-algebras with prescribed $K$-theory, \textit{Math. Ann.}, \textbf{370(3-4)}(2018), 1361--1387.

\bibitem{exc}
\textsc{I. F. Putnam}, An excision theorem for the $K$-theory of C$^*$-algebras, with applications to groupoid C$^*$-algebras, \textit{M\"unster J. of Math.}, \textbf{14}(2021), 349--402.

\bibitem{schafhauser}
\textsc{C. Schafhauser}, Subalgebras of simple AF-algebras, \textit{Ann. Math.}, \textbf{192(2)}(2020), 309--352.

\bibitem{sch}
\textsc{C. Schochet}, ``Topological methods for C$^*$-algebras II: geometric resolutions and the K\"unneth formula", \textit{Pac. J. Math}, 98(2)(1982), 443--458.

\bibitem{groupoid}
\textsc{A. Sims, G. Szabó, D. Williams}, \textit{Operator Algebras and Dynamics: Groupoids, Crossed Products, and Rokhlin Dimension}, Advanced Courses in Mathematics - CRM Barcelona, 2020.

\end{thebibliography}
\end{document}